\def\tr{{\raise0pt\hbox{$\scriptscriptstyle\top$}}}
\newtheorem{theorem}{Theorem}[section]
\newtheorem{proposition}[theorem]{Proposition}
\newtheorem{corollary}[theorem]{Corollary}
\newtheorem{problem}[theorem]{Problem}
\newtheorem{question}[theorem]{Open Question}
\newtheorem{algorithm}[theorem]{Algorithm}
\title{\vspace{-1.5 cm}{\bf Diophantine equations: a systematic approach}}
\author{Bogdan Grechuk\footnote{School of Computing and Mathematical Sciences, University of Leicester, LE1 7RH, UK; bg83@leicester.ac.uk}}
\begin{document}

\maketitle

\begin{abstract}
This paper initiates a novel research direction in the theory of Diophantine equations: define an appropriate version of the equation's size, order all polynomial Diophantine equations starting from the smallest ones, and then solve the equations in that order. By combining a new computer-aided procedure with human reasoning, we solved Hilbert's tenth problem for all polynomial Diophantine equations of size less than $31$, where the size is defined in \cite{Z2018}. In addition, we solved this problem for all equations of size equal to $31$, with a single exception. Further, we solved Hilbert's tenth problem for all two-variable Diophantine equations of size less than $32$, all symmetric equations of size less than $39$, all three-monomial equations of size less than $45$, and, in each category, identified the explicit smallest equations for which the problem remains open. As a result, we derived a list of equations that are very simple to write down but which are apparently difficult to solve. As we know from the example of Fermat's Last Theorem, such equations have the potential to stimulate the development of new methods in number theory.  
\end{abstract}

\textbf{Key words}: Diophantine equations, Hilbert's tenth problem, Hasse principle, quadratic reciprocity, Vieta jumping.

\textbf{2020 Mathematics Subject Classification}. Primary 11D99; Secondary 11D25.

\section{Introduction.}

A polynomial Diophantine equation is an equation of the form
\begin{equation}\label{eq:diofgen}
P(x_1,\dots, x_n)=0,
\end{equation}
where $P$ is a polynomial with integer coefficients. The name ``Diophantine'' originates from the fact that such equations were studied by Diophantus of Alexandria, a mathematician of the 3rd century. Diophantine equations are a very active area of current research, and reviewing even a small portion of the existing literature goes beyond the scope of this paper. Instead, we refer the reader to the classical book of Dickson \cite{dickson2013history} that gives a systematic survey of essentially all research in this area up to about 1920, the 1969 book of Mordell \cite{mordell1969diophantine} that gives an excellent description of the main techniques for solving Diophantine equations, the great survey paper of Cassels \cite{cassels1966diophantine} , and more recent excellent books of Cohen \cite{cohen2008number} and Andreescu \cite{andreescu2010introduction}.

One of the basic problems in the theory of Diophantine equations is the following one.

\begin{problem}\label{prob:main}
Does equation \eqref{eq:diofgen} have an integer solution?
\end{problem}

In 1900, Hilbert \cite{hilbert1902mathematical} presented a list of $23$ mathematical problems for the next centuries. Hilbert 10th problem asks for a general method for solving Problem \ref{prob:main} for all Diophantine equations. It is clear that Hilbert expected a positive answer to this question. However, Davis, Putnam and Robinson \cite{davis1961decision} proved in 1961 that for Diophantine equations in which some of the exponents may be unknowns, the answer is negative, and no such general method exists. Building on this work, Matiyasevich \cite{matijasevic1970enumerable} proved in 1970 that Hilbert’s 10th problem has a negative answer for polynomial Diophantine equations as well. In other words, there is no algorithm which takes the coefficients of the polynomial $P$ as an input, runs for a finite time, and correctly outputs whether equation  \eqref{eq:diofgen} has an integer solution. Moreover, it is known that the problem remains undecidable even for some restricted families of polynomial Diophantine equations, such as equations in at most $11$ variables \cite{zhi2021further} or equations of degree at most $8$ \cite{jones1980undecidable}\footnote{If the task is to determine the existence of a \emph{positive} integer solution, then there is no algorithm for equations of degree $4$. On the other hand, if the task is to determine the existence of a rational solution to \eqref{eq:diofgen}, then even the existence of a general algorithm for all equations has not been ruled out yet \cite{sturmfels1987decidability}.}. See excellent recent surveys of Gasarch \cite{gasarch2021hilbertb, gasarch2021hilberta, gasarch2021hilberts} for more detailed discussions concerning for which families of Diophantine equations the Hilbert 10th problem is undecidable, and for which families it is known to be decidable. Also, there are explicit examples of one-parameter families of Diophantine equations for which it is undecidable to determine for which values of the parameter an equation is solvable \cite{jones1980undecidable}. However, these examples are quite complicated. 

Can we at least solve all ``simple-looking'' Diophantine equations? 
%There are many families of equations for which the solution to this problem is known  \cite{cohen2008number}, 
This paper is inspired by the following question, asked on the mathoverflow website \cite{Z2018}: What is the smallest Diophantine equation for which the Problem \ref{prob:main} is open? The measure $H$ of ``size'' of a Diophantine equation \eqref{eq:diofgen} suggested in \cite{Z2018} is the following one: substitute $2$ in the polynomial $P$ instead of all variables, absolute values instead of all coefficients, and evaluate. In other words, if $P$ in the reduced form has $k$ monomials of degrees $d_1, \dots, d_k$ with coefficients $a_1, \dots, a_k$, respectively, then 
\begin{equation}\label{eq:Hdef}
H(P) = \sum_{i=1}^k |a_i|2^{d_i}.
\end{equation} 
For example, for the equation
\begin{equation}\label{eq:Booker}
x^3+y^3+z^3-33=0,
\end{equation} 
we have
$$
H(x^3+y^3+z^3-33)=2^3+2^3+2^3+33 = 57.
$$

Problem \ref{prob:main} for the equation \eqref{eq:Booker} was open at the time the question \cite{Z2018} was asked, but was later solved by Andrew R. Booker \cite{booker2019cracking}. The answer turned out to be ``Yes'', and the solution Booker found is 
\begin{multline}
  8,866,128,975,287,528^3 + (-8,778,405,442,862,239)^3  
  +(-2,736,111,468,807,040)^3 = 33 
\end{multline}
To the best of our knowledge, Problem \ref{prob:main} remains open for the equation $x^3+y^3+z^3-114=0$ of size $H=138$. 
%At the time of writing, we are unaware of any equation with smaller $H$ for which Problem \ref{prob:main} is formulated and discussed in the literature but remains open. 

In this paper, we study the solvability of Diophantine equations systematically, starting from equations with $H=0,1,2,3,\dots$, and so on. In the process, we review some techniques for solving Diophantine equations, and list certain families of equations which are solvable by these techniques. We then present the ``smallest'' equations outside of these families, solve some of these equations, and suggest many other equations to the readers as open questions. 

We conclude the introduction by discussing why this particular choice of measure of equation ``size'' has been chosen in this paper. 
%We conclude this section by a discussion of why we have chosen $H$ as a ``measure of simplicity''. 
The first obvious property of $H$ is that, for any integer $B>0$, there are only finitely many Diophantine equations of size $H \leq B$. This property, which we call \emph{the finiteness property}, fails for many standard ``measures of simplicity'' of polynomials. For example, if we define the height of a polynomial as the maximum absolute value of its coefficients, then there are infinitely many polynomials with height $1$. Similarly, there are infinitely many equations with a given degree or a given number of variables.

Of course, one may define many other ``measures of equation size'' satisfying the finiteness property, for example, substitute any other constant (such as $3$) instead of $2$ in \eqref{eq:Hdef}. We next give some justification of this particular formula and the constant $2$ in it. For simplicity, let us consider monomials, and discuss how many symbols we need to write down a given monomial. If we do not use the power symbol, and write, for example, $x^3y^2$ as $xxxyy$, we need exactly $d$ symbols to write a monomial of degree $d$ and coefficient $a=1$. If $a\neq 1$, we also need about
%\footnote{The exact number of symbols is the smallest integer larger than $\log_2(|a|)$, but in this case the coefficients $4$ and $7$ have the same length.} 
$\log_2(|a|)$ symbols to write down $a$ in binary (ignoring sign). So, let us define the length of a monomial $M$ of degree $d$ with coefficient $a$ as
$$
l(M) := \log_2(|a|) + d.
$$    
Length $l(M)$ has the disadvantage of being not always integer, but ordering monomials by $l$ is equivalent to ordering them by 
$$
2^{l(M)} = 2^{\log_2(|a|)+d} = |a|2^d.
$$
This is exactly the size $H$ of $M$ defined in \eqref{eq:Hdef}. So, at least for monomials, $H$ has the meaning of being a monotone transformation of (the approximation of) the number of symbols needed to write $M$, and the constant $2$ corresponds to the fact that coefficients are written in binary, which is a reasonable and standard assumption.

The discussion above explains why $H$ is a natural choice, but of course does not imply that $H$ is the \emph{only} possible choice. For example, a natural alternative would be to accept the fact that length can be irrational, not transform it to integers, and define the length of a polynomial $P$ consisting of monomials of degrees $d_1, \dots, d_k$ with coefficients $a_1, \dots, a_k$ as
\begin{equation}\label{eq:ldef}
l(P) = \sum_{i=1}^k  \log_2(|a_i|) + \sum_{i=1}^k d_i = \log_2\left(\prod_{i=1}^k|a_i|\cdot 2^{\sum_{i=1}^k d_i}\right).
\end{equation}
In Section \ref{sec:short}, we will investigate what happens if we order the polynomials by $l$ instead of $H$, and conclude that we will end up of studying a similar set of equations, just arriving in a different order. The reason is that there are some equations that are hard to solve but at the same time are amazingly simple to write down, and these equations come up reasonably soon in any natural ordering of equations. In this sense, the particular choice of how to order equations (by $H$, by $l$, or in some other natural way) does not matter too much.
%Formally, let ${\mathbb N}$ be the set of positive integers, ${\cal P}$ be the set of multivariate polynomials with integer coefficients and $h:{\cal P} \to {\mathbb N}$ be an arbitrary function such that 
%\begin{equation}\label{eq:good}
%\text{set} \quad \{P \in {\cal P}: h(P) \leq B\} \quad \text{is finite for every} \,\, B.
%\end{equation} 
%Then we can list all $P \in {\cal P}$ as a sequence  ``ordered by $h$'', that is,
%\begin{equation}\label{eq:sequence}
%P_1, P_2, \dots, P_i, \dots
%\end{equation} 
%such that $h(P_i) \leq h(P_{i+1})$ for all $i$. We next give the following highly informal definition. Function $h$ is called a ``good measure of size'' of polynomial Diophantine equations if it satisfies \eqref{eq:good} and equations which ``look simple'' (such as, for example, equation $y^2=x^3-3$) appear ``reasonably soon'' in sequence \eqref{eq:sequence}. Now, we claim that the particular choice $h=H$ that we made in this paper is not essential. Indeed, using $H$ as a measure of equation size, we will find some equations (see equations \eqref{eq:h15}-\eqref{eq:H50term3} below) which are at the same time small and interesting. If we were to use any other ``good measure of size'' $h$ instead of $H$, the definition above implies that we would ``reasonably soon'' discover the same equations \eqref{eq:h15}-\eqref{eq:H50term3}, albeit possibly in a different order. 

The contribution and organization of this work are as follows. Section \ref{sec:general} reviews some 
%families of Diophantine equations that are solvable by known algorithms, and also some 
known methods and algorithms for determining whether an equation has integer solutions, such as the Hasse principle, prime factors analysis using the law of quadratic reciprocity, the Vieta jumping technique, etc. We then use these methods to solve, with computer assistance, Hilbert's tenth problem for all\footnote{Here, by ``all'' we mean all equations that have not been solved by other people. For equations solved by others, a reference is given.} equations of size $H\leq 30$, and also for all equations of size $H=31$ with a single exception, see equation \eqref{eq:h31main}. In Section \ref{sec:special}, we consider special classes of equations, such as equations in two variables, symmetric equations, and equations with three monomials, and, in each category, solve all equations up to a certain size. In addition, Section \ref{sec:short} solves all equations of length $l<10$, and also all equations of length $l=10$ with three exceptions. Section \ref{sec:concl} concludes the work and lists some directions for further research. 

\section{General Diophantine equations}\label{sec:general}

\subsection{$H \leq 16$: trivial and well-known equations.}\label{sec:trivial}

This section investigates Problem \ref{prob:main} for Diophantine equations \eqref{eq:diofgen} for polynomials $P$ with given $H=0,1,2,3,\dots$. We used a simple computer program for enumerating all such equations. The program returns equation $0=0$ for $H=0$, equations $\pm 1 = 0$ for $H=1$, equations $\pm 2 =0$ and $\pm x = 0$ for $H=2$, and so on. For values of $H\leq 14$, all the equations are uninteresting and belong to at least one of the following families of equations with trivially solvable Problem \ref{prob:main}.

\begin{itemize}
%\item Equations such that $P(0,\dots,0)=0$, .  
\item \textbf{Equations with no variables} like $0=0$, $\pm 1=0$, $\pm 2=0$, etc.
\item \textbf{Equations with small solutions}, e.g. equation $xy+y+1=0$ has solution $x=0, y=-1$. 
In this case, the answer to Problem \ref{prob:main} is trivially ``Yes'', and such equations can be safely excluded from the analysis. Specifically, we have excluded all equations that have a solution with $\max_i{|x_i|}\leq 100$. In particular, this excludes equations with no free term, for which $P(0,\dots,0)=0$. In fact, after multiplying by $-1$ if necessary, we may assume that $P(0,\dots,0)>0$. After this modification, the program returns no equations of size $H\leq 4$ but returns equations $x^2+1=0$ and $\pm 2x+1=0$ of size $H=5$.
\item \textbf{Equations in one variable} 
\begin{equation}\label{eq:onevar}
a_m x^m + \dots + a_1 x + a_0 = 0
\end{equation}
with $a_0\neq 0$, in which any integer solution must be a divisor of $a_0$. In fact, all integer solutions to \eqref{eq:onevar} can be listed in time polynomial in the size of the input \cite{cucker1999polynomial}. We added a condition to the code to exclude all such equations, which immediately allowed us to exclude all the equations up to $H\leq 8$. More generally, this is the way we proceed further in this paper: identify a class of equations for which Problem \ref{prob:main} can be solved, modify the code to exclude all equations from this class from further consideration, find the equations with smallest $H$ not excluded so far, and then repeat the process.
 
\item \textbf{Equivalent equations.} The program next returns many equations of size $H=9$, including, for example, four equations $\pm 2x \pm 2y+1=0$. As a next step, we call equations equivalent if they can be transformed to each other by substitutions $x_i \to -x_i$ and/or permutations of variables, and exclude all equations except one from every such equivalence class. For example, the program then returns only one equation $2x+2y+1=0$ from the aforementioned four.  

\item We next exclude all \textbf{linear equations}
$
a_1 x_1 + a_2 x_2 + \dots + a_n x_n + b = 0,
$
because they all can be easily solved, in fact in polynomial time \cite{chou1982algorithms}. More generally, we may exclude all equations of the form 
\begin{equation}\label{eq:lingen}
a x_1 + Q(x_2,\dots,x_n) = 0
\end{equation}
where $a\neq 0$ is an integer and $Q$ is a polynomial with integer coefficients. For such equations, Problem \ref{prob:main} has a ``Yes'' answer if and only if $Q(x_2,\dots,x_n)$ is a multiple of $a$ for some integers $x_2,\dots,x_n$. Note that \eqref{eq:lingen} covers all linear equations and also some non-linear ones like $3y=x^2+1$.

\item \textbf{Equations that have no real solutions}. Obviously, such equations have no integer solutions either. A simple example is the equation $x^2+y^2+1=0$ of size $H=9$. All equations with no real solutions are recognizable in finite time \cite{tarski1998decision}.  
%% Problem \ref{prob:main}
%In general, checking whether a polynomial equation has no real solutions is a highly non-trivial problem. 
%%Hilbert 17th problem asks: Given a multivariate polynomial that takes only non-negative values over the reals, can it be represented as a sum of squares of rational functions?
%However, a deep theorem of Tarski implies that this problem is decidable in finite time. We have excluded all such equations. 
Moreover, we have also excluded equations for which the inequality $P(x_1, \dots, x_n)\leq 0$ (or $P(x_1, \dots, x_n)\geq 0$) has a finite number of integer solutions, which also allows to solve Problem \ref{prob:main} in finite time. A simple example is the equation $3-x^2-y^2=0$ of size $H=11$.

\item \textbf{Equations with no solutions modulo $a$}. We next exclude equations $P(x_1,\dots,x_n)=0$ for which there exists an integer $a\geq 2$ such that $P(x_1,\dots,x_n)$ is not a multiple of $a$ for any integers $x_1,\dots,x_n$. Obviously, 
then the equation has no integer solutions. A simple example is the equation $2xy+1=0$ of size $H=9$, where the left-hand side is always odd. The divisibility condition can be easily checked for any fixed $a$ by enumerating all possible remainders that $x_1,\dots,x_n$ can give after division by $a$. Moreover there is an algorithm developed by Ax \cite{ax1967solving} in 1967 that checks this condition for all values of $a$ in finite time\footnote{The main theorem in \cite{ax1967solving} gives an algorithm for solving a Diophantine equation modulo every prime $p$. However, on page 11 of \cite{ax1967solving}, it is remarked that ``This gives an algorithm for determining whether a given system of Diophantine equations has, for all primes $p$, a solution over $Z_p$'', where $Z_p$ is the set of p-adic integers. This is equivalent 
%(see Theorem 5.1 in ``p-adic numbers and Diophantine equations'' by Yuri Bilu) 
to the solvability of the equation modulo every prime power, which in turn is equivalent to its solvability modulo every integer $a$ by the Chinese remainder theorem.}. This allows us to exclude all equations of size $H\leq 12$.
 
\item Equation $(x^2+2)y=1$ of size $H=13$ is the smallest one which has real solutions and also solutions modulo every integer, but still has no integer solutions, because $\frac{1}{x^2+2}$ is never an integer. We next exclude all \textbf{equations in the form} 
\begin{equation}\label{eq:polprod}
P_1 \cdot P_2 \cdot ... \cdot P_k - a = 0,
\end{equation}
where $P_i$ are polynomials and $a$ is a constant, for which there is no factorisation $a=u_1\cdot ... \cdot u_k$ of $a$ such that the system of equations $P_i=u_i, \, i=1,\dots,k$ is solvable in integers. For all such equations, the answer to Problem \ref{prob:main} is ``No''. This allows us to exclude all equations of size $H\leq 14$.
\item \textbf{Equations that can be reduced to equations with smaller $H$}. If polynomial $P$ in the equation $P=0$ is representable as a product of polynomials with smaller $H$, the resulting equation
\begin{equation}\label{eq:product}
\prod\limits_{i=1}^k P_i(x_1,\dots,x_n) = 0
\end{equation}
has an integer solution if and only if at least one of the equations $P_i(x_1,\dots,x_n) = 0, \, i=1,\dots,k$ has an integer solution. 
In some other cases, an equation can be reduced to a smaller one by a linear substitution, e.g. equation $(y+1)^2=x^3-3$ of size $H=20$ reduces to $z^2=x^3-3$ of size $H=15$ after linear substitution $y+1 \to z$. Another example of an equation that trivially reduces to $z^2=x^3-3$ is the equation $(yz)^2=x^3-3$. In general, if an equation $A$ can be reduced to a smaller equation $B$, it does not mean that $A$ is trivial, because $B$ may be far from trivial itself. However, because we study all equations systematically in order, any such reduction implies that equation $B$ has been already considered, hence equation $A$ can be safely excluded.
\end{itemize}

For all the equations listed above Problem \ref{prob:main} is trivial and we have excluded all such equations from consideration. This automatically excluded all equations of size $H\leq 14$, and the smallest equation the program classified as non-trivial was the equation 
\begin{equation}\label{eq:h15}
y^2=x^3-3
\end{equation}
of size $H=15$.

%In particular, this excludes all equations of size $H \leq 8$.
As a next step, we have also excluded some classes of equations for which Problem \ref{prob:main} can be highly non-trivial but is known to be solvable in finite time. Specifically, we have excluded:

\begin{itemize}

\item \textbf{Quadratic equations}. A deep theorem of Grunewald and Segal \cite{grunewald1981solve} proves the existence of a finite algorithm for solving Problem \ref{prob:main} for quadratic equations in any number of variables.
\begin{theorem}\label{th:quadratic}
There is an algorithm which, given any quadratic polynomial 
$$
P(x_1, \dots, x_n) = \sum_{i=1}^n \sum_{j=1}^n a_{ij} x_i x_j + \sum_{i=1}^n b_i x_i + c
$$
with integer coefficients, determines whether equation $P=0$ has an integer solution.
\end{theorem}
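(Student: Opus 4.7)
The plan is to reduce Problem \ref{prob:main} for quadratic $P$ to a representation question about integral quadratic forms, attack the rational version by the Hasse--Minkowski local-global principle, and then bridge from rational to integer solvability using genus theory of quadratic forms.

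First I would write $P(x_1,\dots,x_n)=x^\tr A x + b^\tr x + c$ with $A$ a symmetric integer matrix (working with $2P$ to clear off-diagonal halves). Homogenizing by an auxiliary variable $x_0$ produces an integral quadratic form $Q(x_0,x_1,\dots,x_n)$, and $P=0$ has an integer solution iff $Q$ has a primitive integer zero with $x_0=\pm 1$. Equivalently, when $A$ is nondegenerate, completing the square (after multiplying by $4\det A$) turns the problem into: does a specific integer $N$ get represented by a specific integral quadratic form $Q'$ in $n$ variables, subject to a fixed congruence condition picking out a coset of a sublattice of $\mathbb{Z}^n$?

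For the rational analogue I would invoke Hasse--Minkowski: $Q'(y)=N$ has a rational solution iff it has one over $\mathbb{R}$ and over $\mathbb{Q}_p$ for every prime $p$. The real condition is a signature/positivity computation, and the $p$-adic conditions are automatic for $p\nmid 2N\det(Q')$, so only finitely many primes need checking via Hilbert symbols and Hensel's lemma; this is a finite algorithm. To pass from $\mathbb{Q}$ to $\mathbb{Z}$, I would use genus theory: $N$ is represented by \emph{some} form in the genus of $Q'$ iff it is represented locally over $\mathbb{Z}_p$ for every $p$ and over $\mathbb{R}$, and the genus contains only finitely many $\mathrm{GL}_n(\mathbb{Z})$-equivalence classes, which can be enumerated effectively by Minkowski-style reduction. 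When $Q'$ is indefinite of rank $\ge 3$, strong approximation for spin groups collapses the spinor genus to a single class, so local integer representability already implies integer representability by $Q'$ itself. When $Q'$ is definite, $n$ is bounded by the signature constraint and a direct search over vectors of norm at most $|N|$ finishes the job.

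The main obstacle is the binary indefinite case, where genus and class no longer coincide and the above reduction is insufficient. After diagonalization and clearing denominators, this boils down to generalized Pell equations $x^2-Dy^2=N$. I would handle these classically: for $D>0$ non-square, the continued fraction expansion of $\sqrt{D}$ yields the fundamental unit of $\mathbb{Z}[\sqrt{D}]$, and every integer solution lies in one of finitely many orbits under multiplication by units, with orbit representatives found by an explicit bounded search in terms of $D$ and $N$. Beyond this, several degenerate subcases — singular $A$, forms factoring over $\mathbb{Q}$ or $\mathbb{Q}(\sqrt{d})$, linear-times-linear decompositions reducing to equations of type \eqref{eq:polprod} — must be listed and dispatched separately. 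I expect the chief difficulty to be not any individual ingredient but welding them into one uniform algorithm with a proof of termination; this unification is what Grunewald and Segal achieve in full generality through a more systematic analysis of arithmetic groups acting on the variety of solutions.
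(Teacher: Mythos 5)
The paper does not actually prove this theorem: its entire ``proof'' is the citation to Grunewald and Segal \cite{grunewald1981solve}, and Theorem \ref{th:quadratic} is used purely as a black box to justify excluding quadratic equations from the search. Your proposal, by contrast, sketches the classical route --- homogenize and complete the square, decide rational solvability by Hasse--Minkowski, pass from $\mathbb{Q}$ to $\mathbb{Z}$ via genus theory together with Eichler's spinor-genus theorem in the indefinite case, and fall back on Pell-equation and continued-fraction theory for binary indefinite forms. That is indeed the standard skeleton of such an algorithm, and you are right that the real content of \cite{grunewald1981solve} is welding these cases into a single terminating procedure. Two steps in your sketch are genuinely imprecise, however. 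First, for \emph{ternary} indefinite forms the spinor genus does coincide with the class, but an integer represented by the genus need not be represented by every spinor genus in that genus: spinor exceptional integers exist, so the claim that ``local integer representability already implies integer representability by $Q'$ itself'' is false as stated for $n=3$; one must additionally compute the finitely many, effectively determinable, spinor exceptional square classes. Second, completing the square leaves you with a representation problem \emph{with congruence conditions}, i.e. representation of $N$ on a coset of a sublattice of $\mathbb{Z}^n$, whereas the clean local-global statements of genus and spinor-genus theory are formulated for lattices; extending them to lattice cosets is precisely one of the places where the general argument becomes delicate. Neither issue is fatal --- both are repairable and both are dealt with in \cite{grunewald1981solve} --- but as written your indefinite rank-$\ge 3$ step has a gap, and your closing sentence correctly concedes that the uniform termination proof is the part you have not supplied.
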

% that is, equations of the form 
%$$
%\sum_{i=1}^n \sum_{j=1}^n a_{ij} x_i x_j + \sum_{i=1}^n b_i x_i + c = 0
%$$
For $n=2$ variables, there is an implementation available online \cite{Alpern} that not only decides whether a quadratic equation has an integer solution, but in fact determines all integer solutions. If there are infinitely many solutions, they are described in the form of expressions with parameters or recurrence relations.
% \footnote{See \cite{poulakis2002solving} for the meaning of word \emph{determines} in case if \eqref{eq:cubic2var} has infinitely many solutions. 
For our purposes, it suffices that the algorithm checks whether the equation has a finite number of integer solutions, and if so, lists all the solutions.

Theorem \ref{th:quadratic} allows us to exclude all quadratic equations.

We remark that excluding quadratic equations also allows us to exclude all $n$-variable equations of size $H \leq 4n+4$.

\begin{proposition}\label{prop:4nplus4}
Let $P(x_1, \dots, x_n)$ be a polynomial in $n$ variables with integer coefficients such that either (a) $H(P)\leq 4n$ or (b) $P$ has degree at least $3$ and $H(P)\leq 4n+4$. Then equation \eqref{eq:diofgen} has an integer solution.
\end{proposition}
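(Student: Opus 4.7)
The plan is to exploit the low value of $H(P)$ to force $P$ into a structural shape from which an integer solution can be written down explicitly.

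First, I would dispose of the trivial case: if $c := P(0, \ldots, 0) = 0$, then $(0, \ldots, 0)$ solves $P = 0$. Henceforth assume $c \neq 0$, so $|c| \geq 1$, and without loss of generality that every variable actually occurs in $P$. The key bookkeeping step uses the elementary inequality $|a_M|\,2^{d_M} \geq 2\,d_M \geq 2\,\mathrm{dv}(M)$, valid for each non-constant monomial $M$ of coefficient $a_M$ and degree $d_M$, where $\mathrm{dv}(M)$ denotes the number of distinct variables appearing in $M$. Summing and interchanging orders of summation gives
\[
H(P) - |c| \;\geq\; 2 \sum_{i=1}^{n} k_i, \qquad k_i := \#\{M \text{ non-constant with } x_i \in M\},
\]
with each $k_i \geq 1$. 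Under hypothesis (a), this forces $\sum k_i \leq (4n - 1)/2 < 2n$, so some $k_{i^*} = 1$. Under hypothesis (b), a required degree-$\geq 3$ monomial carries cost $\geq 8 = 2\cdot 3 + 2$, providing two additional units of slack beyond the $2\,\mathrm{dv}$ baseline that yield the same conclusion (possibly after first eliminating the high-degree monomial by setting $x=0$ for a variable exclusive to it and recursing).

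Let $M^*$ be the unique monomial containing $x_{i^*}$. The main case is $M^* = \pm x_{i^*}$ (the only shape of cost exactly $2$): writing $P = \pm x_{i^*} + Q$ with $Q$ independent of $x_{i^*}$, the assignment $x_j = 0$ for $j \neq i^*$ together with $x_{i^*} = \mp c$ yields $P = 0$. Otherwise $M^*$ has cost $\geq 4$ and a more complicated shape $\pm a\,x_{i^*}^{e_0} x_{j_1}^{e_1} \cdots x_{j_r}^{e_r}$; I would then either (i) substitute suitable values $\pm 1$ for the auxiliary variables $x_{j_1}, \ldots, x_{j_r}$ and zero for the remaining variables, collapsing $P = 0$ to a one-variable equation in $x_{i^*}$ whose integer roots can be read off directly, or (ii) set $x_{i^*} = 0$, which eliminates $M^*$ entirely (since $k_{i^*} = 1$) and yields a polynomial of strictly smaller size and variable count to which the proposition applies inductively.

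The main obstacle is the enumeration of the residual shapes of $M^*$ in this second case: when option (i) produces an equation such as $\pm x_{i^*}^2 = K$ with $K$ not a perfect square, and option (ii) descends into a degree-$\leq 2$ polynomial in fewer variables not covered directly by either clause of the proposition. A finite case analysis, exploiting the tight remaining budget, the sign flexibility afforded by the auxiliary variables of $M^*$, and direct solvability verifications such as Pell-type factorisations $(x_a - x_b)(x_a + x_b) = \pm 1$ for the residual low-degree polynomials, completes the proof.
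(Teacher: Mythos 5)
Your counting inequality is sound, but it is deliberately coarser than it needs to be, and that coarseness is the source of two genuine gaps. First, in case (b) your slack bound only yields $2\sum_i k_i \le (4n+4)-1-2 = 4n+1$, hence $\sum_i k_i \le 2n$, which is consistent with \emph{every} $k_i$ equalling $2$. The extremal configuration realizing this (the degree-$3$ monomial is $\pm x_ix_jx_u$ with three distinct variables, so its slack is exactly $2$; all other monomials are squarefree of degree $\le 2$ with coefficient $\pm 1$; $|c|\le 2$) contains no variable "exclusive to" the high-degree monomial, so your parenthetical remedy has nothing to act on; this tight case needs its own argument, and it is exactly the case on which the paper spends the second half of its proof. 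Second, your endgame is not a proof: you yourself exhibit residual situations (option (i) collapsing to $\pm x_{i^*}^2=K$ with $K$ not a square; option (ii) descending to a polynomial covered by neither clause) and then assert that "a finite case analysis \dots completes the proof" without performing it. Note moreover that the shapes of $M^*$ are not drawn from a list independent of $n$: in case (a) the available slack is roughly $2n$, so $M^*$ can have degree up to about $\log_2 n$, and the analysis would have to be organized structurally (e.g.\ either $|a_{M^*}|2^{d_{M^*}}\ge 4\,|V^*|$, where $V^*$ is the set of variables occurring only in $M^*$, in which case setting $x_{i^*}=0$ recurses into clause (a), or $M^*$ is one of three explicit squarefree shapes solvable by hand) rather than by literal enumeration. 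As written, the hard part of the proposition is deferred, not done.

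The paper sidesteps all of this by counting, not the number of monomials containing $x_i$, but the number of \emph{occurrences} of $x_i$ in the fully expanded form (writing $2x^2y$ as $xxy+xxy$), so that both the coefficient magnitude and the exponents contribute. Since $|a_M|2^{d_M}\ge 2|a_M|d_M = 2\sum_i(\text{occurrences of }x_i\text{ in }M)$, the same $H\ge |c|+2k$ bookkeeping then produces, in case (a), a variable occurring exactly once in the expanded form --- that is, appearing linearly, with coefficient $\pm 1$, in a single monomial --- and setting all other variables to $1$ solves the equation immediately. If you replace your $k_i$ by this finer count, your entire case analysis on the shape of $M^*$ evaporates, and only the tight sub-case of (b) described above remains to be argued separately.
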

\begin{proof}
%Assume that $P$ is such that \eqref{eq:diofgen} has no integer solutions. 
Let us write down each monomial of $P$ without coefficients and power symbols, for example, write $2x^2y$ as $xxy+xxy$, etc. We call this the ``expanded form'' of $P$. If there is a variable $x_i$ that is present in the expanded form only once, then setting all other variables to $1$ and calculating $x_i$ we obtain an integer solution to \eqref{eq:diofgen}. Hence, it is left to consider the case when each $x_i$ appears at least twice. Now, to calculate $H$, replace all ``$-$'' by ``$+$'' and all $x_i$ by $2$, and get that
$$
H = |a| + E,
$$
where $a$ is the free term of $P$, and $E$ is the expression containing $k\geq 2n$ two-s and operations ``$+$'' and ``$\cdot$''. Because replacing ``$\cdot$'' by ``$+$'' can only decrease $E$, we have $E \geq 2k \geq 4n$. Hence, if $H \leq 4n$ then $|a|=0$, and \eqref{eq:diofgen} has solution $x_1=\dots=x_n=0$, which proves (a).

Now assume that $P$ has degree at least $3$ and $H\leq 4n+4$. If $E=4n+4$ then $|a|=0$ and \eqref{eq:diofgen} is solvable, hence we may assume $E<4n+4$. Because $E$ is even, it follows that $E\leq 4n+2$. Because $P$ has at least one term of degree at least $3$, and replacing $2\cdot 2\cdot 2$ by $2+2+2$ decreases $E$ by $2$, we must have $E\geq 2k+2=4n+2$. Hence, equality holds, which means that $P$ has exactly one cubic term with coefficient $\pm 1$, and each variable $x_i$ appears in the extended form of $P$ exactly twice. This means that the cubic term cannot have the form $x_i^3$, so it must be either $\pm x_i^2x_j$ or $\pm x_ix_jx_u$. Then the variable $x_j$ must enter some other monomial, either $\pm x_j$ or $\pm x_jx_v$, and, by exchanging $i$ and $u$ if necessary, we may assume that $v\neq i$. But then taking $x_i=0$, $x_v=1$, other variables arbitrary, and estimating $x_j$, we may find an integer solution to \eqref{eq:diofgen}.  
\end{proof}

The bounds on $H$ in Proposition \ref{prop:4nplus4} are the best possible for every $n\geq 2$, as witnessed by equations (a) $2\sum_{i=1}^nx_i+1=0$ (or $\sum_{i=1}^nx_i^2+1=0$) of size $H=4n+1$ and (b) $(x_1^2+x_1)x_2+2\sum_{i=3}^nx_i+1=0$ of size $H=4n+5$ that have no integer solutions.

Theorem \ref{th:quadratic} and Proposition \ref{prop:4nplus4} imply that for $H\leq 16$ it suffices to consider only cubic equations in $2$ variables. We discuss such equations next. 

\item \textbf{Cubic equations in two variables}, the smallest non-trivial example is \eqref{eq:h15}. 
This is the smallest equation whose solvability problem 
%is not completely trivial, and 
requires at least some thinking and/or at least minimal background in number theory. 
However, this equation belongs to a family for which Problem \ref{prob:main} is known to be solvable. Indeed, we have the following well-known results.

\begin{theorem}\label{th:notabsirr}
Let $P(x,y)$ be a polynomial with integer coefficients which is irreducible over ${\mathbb Q}$ but not absolutely irreducible\footnote{A polynomial $P$ with integer coefficients is called absolutely irreducible if it cannot be written as a product $P=P_1\cdot P_2$ of non-constant polynomials, even if we allow complex coefficients.}. Then there is an algorithm for determining all integer solutions (in fact, all rational solutions) to the equation $P(x,y)=0$. 
\end{theorem}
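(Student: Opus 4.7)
The plan is to exploit the factorization of $P$ over the algebraic closure. Since $P$ is not absolutely irreducible, it factors nontrivially as $P=P_1\cdot P_2\cdots P_k$ with $k\geq 2$ over some finite Galois extension $K/\mathbb{Q}$ containing the coefficients of the $P_i$. Because $P$ is irreducible over $\mathbb{Q}$, the group $\mathrm{Gal}(K/\mathbb{Q})$ must act transitively on $\{P_1,\ldots,P_k\}$ (up to scalars): otherwise the product over a proper invariant subset would be a nontrivial $\mathbb{Q}$-rational factor of $P$. Both the field $K$ and the explicit factors $P_i$ can be produced from $P$ by known algorithms for absolute factorization of bivariate polynomials.

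The key step is the observation that any rational solution $(x,y)$ of $P(x,y)=0$ satisfies $P_i(x,y)=0$ for some $i$, and then for every $\sigma\in\mathrm{Gal}(K/\mathbb{Q})$ we also have $\sigma(P_i)(x,y)=0$, because $\sigma$ fixes the rational numbers $x$ and $y$. By transitivity of the Galois action, such a solution $(x,y)$ must be a common zero of \emph{every} factor $P_j$; in particular it lies simultaneously on the two distinct curves $P_1=0$ and $P_2=0$.

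Since $P_1$ and $P_2$ are distinct absolutely irreducible polynomials in $K[x,y]$, they are coprime, so their common zero set in $\overline{\mathbb{Q}}^2$ is finite (at most $\deg(P_1)\deg(P_2)$ points, by B\'ezout's theorem). This finite intersection can be computed explicitly via resultants or Gr\"obner bases over $K$: for instance, $R(x)=\mathrm{Res}_y(P_1,P_2)\in K[x]$ is a nonzero polynomial whose roots give finitely many candidate $x$-coordinates, and for each such $x_0$ the greatest common divisor of $P_1(x_0,y)$ and $P_2(x_0,y)$ in $\overline{\mathbb{Q}}[y]$ gives finitely many candidate $y$-values. One then simply filters the finite candidate set for membership in $\mathbb{Z}^2$ (or $\mathbb{Q}^2$) to produce the complete solution set. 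The primary algorithmic ingredient is the absolute factorization of $P$; together with standard resultant and root-finding routines over number fields, this renders the whole procedure effective, so the main ``obstacle'' is purely computational rather than conceptual.
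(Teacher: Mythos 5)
Your argument is correct, and its core is exactly the fact the paper relies on but does not prove. The paper homogenizes $P$, asserts that every rational solution of $P(x,y)=0$ must be a singular point of the curve, and then invokes a cited algorithm for listing singular points; your Galois-conjugation step is precisely the missing justification, since a rational zero must kill some absolutely irreducible factor $P_i$ over $K$, hence every Galois conjugate of $P_i$, hence (by transitivity, which is where irreducibility over $\mathbb{Q}$ enters) all the factors --- and a common zero of two distinct factors of $P$ is a singular point of $P=0$, because every term of $\nabla(P_1P_2\cdots)$ contains at least one vanishing factor. Where you genuinely differ is only in how the finite candidate set is computed: the paper lists singular points of the homogenization via a published algorithm, whereas you perform an absolute factorization and intersect $P_1=0$ with $P_2=0$ by resultants; both yield a finite set to be filtered for rational or integer points, and your route has the advantage of being self-contained. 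One small point you should make explicit: the factors $P_1$ and $P_2$ must be non-associate for coprimality and B\'ezout to apply. This does hold, because if some absolutely irreducible factor occurred in $P$ with multiplicity at least $2$, the product of the \emph{distinct} conjugate factors would be Galois-stable up to scalars, hence a $\mathbb{Q}$-rational proper divisor of $P$, contradicting irreducibility over $\mathbb{Q}$; a one-line remark to this effect would close the only gap in your write-up.
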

\begin{proof}
For every polynomial $P(x,y)$ there exists a unique homogeneous polynomial $Q(x,y,z)$ of the same degree such that $P(x,y)=Q(x,y,1)$. Solutions to the system of equations
$$
\frac{\partial Q}{\partial x} = \frac{\partial Q}{\partial y} = \frac{\partial Q}{\partial z} = 0
$$
are called singular points. If $P(x,y)$ is irreducible over ${\mathbb Q}$ but not absolutely irreducible, all rational solutions to $P(x,y)=0$ correspond to singular points. There is a finite number of singular points and an algorithm \cite{sakkalis1990singular} that can list them all. Then we can check which of these singular points lead to a rational (or integer) solutions to $P(x,y)=0$.
\end{proof}

\begin{theorem}\label{th:cubic2var}
There is an algorithm that, given a polynomial $P(x,y)$ of degree at most $3$ with integer coefficients, determines all integer solutions to the equation
\begin{equation}\label{eq:cubic2var}
P(x,y)=0.
\end{equation}
In particular, Problem \ref{prob:main} is solvable for this class of equations.
\end{theorem}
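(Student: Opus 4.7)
The plan is to analyze $P$ according to its algebraic structure, peeling off cases until only one genuinely hard case remains. First, factor $P(x,y)$ over $\mathbb{Q}$ using standard polynomial factorization algorithms. If $P$ is reducible, the integer solutions to $P=0$ are the union of the integer solution sets of each factor, and since $P$ has degree at most $3$, every such factor has degree at most $2$; these are handled by Theorem~\ref{th:quadratic} together with the trivial linear case. If $P$ is irreducible over $\mathbb{Q}$ but not absolutely irreducible, Theorem~\ref{th:notabsirr} finishes the job. So we may assume $P$ is absolutely irreducible of degree exactly $3$.

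Next, compute the singular points of the projective closure $Q(x,y,z)$ of $P$ by solving $Q=Q_x=Q_y=Q_z=0$, a standard elimination computation. An absolutely irreducible plane cubic is either smooth (genus $1$, an elliptic curve) or has exactly one singular point, which is a node or cusp (genus $0$).

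In the singular case, the pencil of lines $y-y_0=t(x-x_0)$ through the singular point $(x_0,y_0)$ meets the cubic in one further point, yielding by polynomial division a rational parametrization $x=R_1(t),\ y=R_2(t)$ with $R_i\in\mathbb{Q}(t)$. Integer solutions then correspond to rationals $t=p/q$ (in lowest terms) with $R_i(p/q)\in\mathbb{Z}$, which after clearing denominators becomes a finite list of explicit divisibility constraints on the pair $(p,q)$. These constraints admit an elementary description, producing the integer points as finitely many one-parameter polynomial families together with finitely many sporadic points.

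The main obstacle is the smooth case, where $P=0$ defines an elliptic curve over $\mathbb{Q}$. By Siegel's theorem the set of integer points is finite, and by Baker's theorem on linear forms in logarithms (applied after a birational transformation to a short Weierstrass model) every integer solution $(x,y)$ satisfies $\max(|x|,|y|)\leq C$ for a constant $C$ effectively computable from the coefficients of $P$; exhaustive search up to $C$ then outputs all solutions. Thus the nontrivial content of the theorem is precisely the appeal to effective transcendence theory for elliptic curves, while all remaining cases reduce to polynomial factorization plus Theorems~\ref{th:quadratic} and~\ref{th:notabsirr}.
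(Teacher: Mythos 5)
Your decomposition is the same as the paper's: factor over $\mathbb{Q}$ and reduce to degree $\leq 2$ via Theorem~\ref{th:quadratic} if reducible; invoke Theorem~\ref{th:notabsirr} if irreducible but not absolutely irreducible; and in the absolutely irreducible case split by genus, using a rational parametrization for singular (genus~$0$) cubics and Baker's effective theorem for smooth (genus~$1$) cubics. The paper simply cites Baker and the two Poulakis papers for the absolutely irreducible case, whereas you sketch the genus-$0$ argument by hand, so the skeleton is identical and the conclusion is correct.

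Two soft spots in the parts you sketch rather than cite. First, in the singular case, the step from ``$R_i(p/q)\in\mathbb{Z}$'' to ``finitely many one-parameter polynomial families plus sporadic points'' is not right as stated: if the quadratic tangent cone at the node is irreducible over $\mathbb{Q}$ (conjugate branches), the denominator of the parametrization is an irreducible quadratic in $t$ and the integer points, when infinite in number, are organized into Pell-type families governed by recurrences (powers of a fundamental unit), not polynomial families. Making this step algorithmic is precisely the content of the Poulakis references the paper leans on. Second, in the smooth case, a birational transformation to a short Weierstrass model does not preserve integrality, so you cannot literally ``apply Baker after transforming'': what is needed is the Baker--Coates result giving effective bounds for integer points on an arbitrary genus-$1$ plane curve, which controls how denominators behave under the reduction. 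Both issues are repaired by citing the same references the paper does, so the proof stands, but as written these two steps are thinner than the claims they support.
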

\begin{proof}
A combination of the main theorems in \cite{baker1970integer}, \cite{poulakis1993points}, and \cite{poulakis2002solving} imply the result in the case when $P(x,y)$ is absolutely irreducible. If $P(x,y)$ is irreducible over ${\mathbb Q}$ but not absolutely irreducible, then the result follows from Theorem \ref{th:notabsirr}. Finally, if $P(x,y)$ is reducible over ${\mathbb Q}$, it can be written as a product of a linear polynomial $P_1(x,y)$ and a quadratic polynomial $P_2(x,y)$ with rational coefficients. Then the equations $P_i(x,y)=0$, $i=1,2$ can be solved by known algorithms \cite{chou1982algorithms, Alpern}.
\end{proof}

%%\footnote{We thank Jeremy Rouse for help with finding these references.} 
%the existence of an algorithm, which, given any  polynomial $P(x,y)$ of degree $3$ with integer coefficients, determines all integer solutions of the equation $P(x,y)=0$.  Moreover, as remarked by Rouse \cite{Rouse2020}, the condition of absolute irreducibility can be easily removed. 
While the original algorithm in \cite{baker1970integer} takes a finite but long time to run, subsequent authors \cite{pethHo1999computing, stroeker2003computing} developed alternative practical algorithms.
For example, there is an implementation in SageMath, which is an open-source and free-to-use mathematical software system \cite{zimmermann2018computational}, of an algorithm for finding all integer solutions to the equations in the form
\begin{equation}\label{eq:ellWei}
y^2 + a x y + c y = x^3 + b x ^2 + d x + e
\end{equation} 
under some minor conditions on (integer) coefficients $a,b,c,d,e$. Such equations are known as elliptic curves in Weierstrass form, and can be solved by a single command
\begin{equation}\label{eq:commandellWei}
sage: EllipticCurve([a,b,c,d,e]).integral\_points()
\end{equation}
that can be run online at \url{https://sagecell.sagemath.org/}. For example, the command
$$
sage: EllipticCurve([0,0,0,0,-3]).integral\_points()
$$
Returns the empty set, which means that equation \eqref{eq:h15} has no integer solutions. 
%We will give a direct proof of this fact in Proposition \ref{prop:h17to22} below. 
In fact, all $2$-variable cubic equations that our program returns up to $H\leq 18$ happened to be in the form \eqref{eq:ellWei} and can be solved by the command \eqref{eq:commandellWei}. The first exception is the equation
$$
2y^2 = x^3 - 3
$$
of size $H=19$, but it can be reduced to $Y^2=X^3-24$ after multiplying by $8$ and using substitution $X=2x$, $Y=4y$. Some other $2$-variable cubics, like, for example,
$$
x^3+x^2y-y^3-y+3 = 0,
$$
are more interesting, but all of them are covered by Theorem \ref{th:cubic2var}, see also 
\cite{stroeker1999solving} for a more practical method. All further equations covered by Theorems \ref{th:notabsirr} and \ref{th:cubic2var} will be excluded from further analysis.
\end{itemize}

If an equation has $H\leq 16$, then it is either quadratic and covered by Theorem \ref{th:quadratic}, or cubic in two variables and covered by Theorem \ref{th:cubic2var}, or cubic in $n\geq 3$ variables and covered by Proposition \ref{prop:4nplus4}. Hence, all equations of size $H\leq 16$ are either trivial or belong to some family for which a finite algorithm for Problem \ref{prob:main} is well-known.

\subsection{$H \geq 17$: prime factors of quadratic forms}\label{sec:residues}

The smallest equation which is not excluded by the criteria described in Section \ref{sec:trivial} is the equation %$- x^2y+y^2+z^2+1= 0$ of size $H=17$. This equation can be written as
\begin{equation}\label{eq:h17}
y(x^2-y)=z^2+1
\end{equation}
of size $H=17$. 
In fact, 
%the program also returns variations of this equation resulting from replacing $y$ by $-y$ and/or permuting the variables, but we will ignore such variations. Then 
the only equations the program returns for $H\leq 20$ are \eqref{eq:h17} and a similar equation
\begin{equation}\label{eq:h19}
y(x^2+3) = z^2+1,
\end{equation} 
of size $H=19$.

These equations are not completely trivial, but they are also not difficult. The key idea is to understand what prime numbers can be divisors of $z^2+1$. For example, it is easy to see that $z^2+1$ is never divisible by $3$, because if $z \equiv 0,1, \text{ or }2 (\text{mod } 3)$, then $z^2+1 \equiv 1,2, \text{ or }2 (\text{mod } 3)$, respectively\footnote{For integer $m$, we will write $a \equiv b (\text{mod } m)$ if $a-b$ is a multiple of $m$.}. More generally, we have the following well-known fact.

\begin{proposition}\label{prop:factorsz2p1}
For any integer $z$, all odd prime factors $p$ of $z^2+1$ are in the form $p=4k+1$.
\end{proposition}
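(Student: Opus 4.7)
The plan is to show that the existence of an odd prime $p$ dividing $z^2+1$ forces the multiplicative order of $z$ modulo $p$ to equal $4$, and then to apply Fermat's little theorem to conclude that $4 \mid p-1$.

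First I would observe that if $p$ is an odd prime with $p \mid z^2+1$, then $p \nmid z$ (otherwise $p$ would divide $z^2+1-z^2 = 1$), so $z$ is invertible modulo $p$. Rewriting the divisibility as a congruence gives $z^2 \equiv -1 \pmod{p}$. Squaring yields $z^4 \equiv 1 \pmod{p}$, so the order $d$ of $z$ in the multiplicative group $(\mathbb{Z}/p\mathbb{Z})^*$ divides $4$. The possibilities are $d \in \{1,2,4\}$. The case $d=1$ gives $z \equiv 1 \pmod p$, whence $z^2 \equiv 1 \equiv -1 \pmod p$, forcing $p \mid 2$, contradicting that $p$ is odd. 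The case $d=2$ gives $z^2 \equiv 1 \pmod p$, and combined with $z^2 \equiv -1 \pmod p$ again forces $p \mid 2$, impossible. Hence $d = 4$.

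Then I would invoke Fermat's little theorem, which states that $z^{p-1} \equiv 1 \pmod p$. By the standard fact that the order of an element divides the order of any exponent giving the identity, $d \mid p-1$, i.e.\ $4 \mid p-1$. Writing $p - 1 = 4k$ for some positive integer $k$ yields $p = 4k+1$ as required.

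There is no real obstacle here; the only thing to be careful about is handling the edge cases $d=1$ and $d=2$ cleanly so that one genuinely concludes $d=4$ rather than just $d \mid 4$. An alternative, more high-powered route would be to quote that $-1$ is a quadratic residue modulo an odd prime $p$ if and only if $p \equiv 1 \pmod 4$ (a consequence of Euler's criterion, or of the first supplement to quadratic reciprocity), but the elementary order argument above is entirely self-contained and better matches the level of the exposition.
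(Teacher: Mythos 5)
Your proof is correct and rests on the same two ingredients as the paper's: the congruence $z^2 \equiv -1 \pmod{p}$ and Fermat's little theorem. The only difference is packaging — you establish that the multiplicative order of $z$ is exactly $4$ and conclude $4 \mid p-1$, whereas the paper computes $z^{p-1} = (z^2)^{(p-1)/2} \equiv (-1)^{(p-1)/2} \equiv 1 \pmod{p}$ directly and deduces that $\frac{p-1}{2}$ is even; these are two standard phrasings of the same argument.
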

\begin{proof}
Let $p$ be any odd prime divisor of $z^2+1$. Then $z^2\equiv-1(\text{mod } p)$, which implies that $z^{p-1}=(z^2)^{\frac{p-1}{2}} \equiv (-1)^{\frac{p-1}{2}}(\text{mod } p)$. But Fermat's little theorem implies that $z^{p-1} \equiv 1 (\text{mod } p)$. Hence $1 \equiv (-1)^{\frac{p-1}{2}} (\text{mod } p)$. Because $1 \not\equiv -1 (\text{mod } p)$, this implies that $\frac{p-1}{2}$ is even, hence $p=4k+1$ for some integer $k$. 
\end{proof}

We now apply Proposition \ref{prop:factorsz2p1} to solve equations \eqref{eq:h17} and \eqref{eq:h19}. We remark that equation \eqref{eq:h17} was first solved by Victor Ostrik in the comments section to the mathoverflow question \cite{Z2018}.

\begin{proposition}\label{prop:h17to20}
Equations \eqref{eq:h17} and \eqref{eq:h19} have no integer solutions.
\end{proposition}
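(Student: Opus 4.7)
The plan is to deduce both non-existence results from Proposition~\ref{prop:factorsz2p1} together with a congruence analysis modulo $4$. Two consequences of that proposition will be used throughout: (i) every odd prime divisor of $z^2+1$ is $\equiv 1 \pmod 4$, and (ii) $4 \nmid z^2+1$ (since $z^2 \equiv 0$ or $1 \pmod 4$). Combining (i) and (ii), every positive divisor $d$ of $z^2+1$ satisfies either $d \equiv 1 \pmod 4$ (when $d$ is odd, because it is a product of primes $\equiv 1 \pmod 4$) or $d \equiv 2 \pmod 4$ (when $d$ is even, because otherwise $4 \mid d \mid z^2+1$).

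For equation \eqref{eq:h19}, $y(x^2+3)=z^2+1$, I would reduce modulo $4$. If $x$ is odd, then $x^2+3 \equiv 0 \pmod 4$, hence $4 \mid z^2+1$, contradicting (ii). If $x$ is even, then $x^2+3$ is odd and $\equiv 3 \pmod 4$; but $x^2+3$ divides $z^2+1$, so by (i) every one of its prime factors is $\equiv 1 \pmod 4$, forcing $x^2+3 \equiv 1 \pmod 4$, again a contradiction.

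For equation \eqref{eq:h17}, $y(x^2-y)=z^2+1$, I would first observe that the product $y(x^2-y)$ is strictly positive; since $y<0$ would force $x^2-y>0$ (because $x^2\ge 0$), both factors must in fact be positive. Thus $y$ and $x^2-y$ are positive divisors of $z^2+1$, and each lies in the restricted residue classes described above. Splitting on the parity of $z$: if $z$ is even then $z^2+1 \equiv 1 \pmod 4$, so both $y$ and $x^2-y$ are odd and each $\equiv 1 \pmod 4$, giving $x^2 = y+(x^2-y) \equiv 2 \pmod 4$, impossible. If $z$ is odd then $z^2+1 \equiv 2 \pmod 4$, so exactly one of $y,\, x^2-y$ is $\equiv 2 \pmod 4$ and the other $\equiv 1 \pmod 4$, yielding $x^2 \equiv 3 \pmod 4$, again impossible. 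The only mildly delicate point—which I view as the main obstacle—is pinning down that every even divisor of $z^2+1$ is exactly $\equiv 2 \pmod 4$ (not $0$); this is precisely what (ii) guarantees, and once it is in hand the sum-of-factors argument closes both cases immediately.
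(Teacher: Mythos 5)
Your proof is correct and follows essentially the same route as the paper's: both arguments use Proposition~\ref{prop:factorsz2p1} (plus $4 \nmid z^2+1$) to restrict all positive divisors of $z^2+1$ to the residues $1$ or $2$ modulo $4$, dispose of \eqref{eq:h19} because $x^2+3$ is always $0$ or $3$ modulo $4$, and dispose of \eqref{eq:h17} by noting both factors $y$ and $x^2-y$ are positive divisors whose sum $x^2$ would then be $2$ or $3$ modulo $4$. Your explicit case split on the parity of $z$ is just a slightly more granular presentation of the same contradiction.
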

\begin{proof}
Because $z^2 \not\equiv 3 (\text{mod } 4)$, $z^2+1$ is not divisible by $4$. Hence, the prime factorization of $z^2+1$ is $2^{s}\prod_i p_i$, where $s$ is $0$ or $1$, and all $p_i$ are $1$ modulo $4$ by Proposition \ref{prop:factorsz2p1}. This implies that all positive divisors of $z^2+1$ are $1$ or $2$ modulo $4$, and if divisor $d$ is even, then $(z^2+1)/d$ is odd.   

Because $x^2+3$ is positive and is never $1$ or $2$ modulo $4$, this implies that equation \eqref{eq:h19} has no integer solutions. In equation \eqref{eq:h17}, $d_1=y$ and $d_2=x^2-y$ are divisors of $z^2+1$. Because $d_1 d_2 = z^2+1 > 0$ and $d_1 + d_2 = x^2 \geq 0$, both $d_1$ and $d_2$ are positive. Hence, they are both equal to $1$ or $2$ modulo $4$, with at most one $d_1$ or $d_2$ even. But then $x^2=d_1+d_2$ is $2$ or $3$ modulo $4$, which is a contradiction.
\end{proof}

Proposition \ref{prop:h17to20} finishes the analysis of all equations of size $H\leq 20$. For $H=21$, the only equation the program returns is 
\begin{equation}\label{eq:h21}
y(x^2+2) = 2z^2-1.
\end{equation} 
%For $H=22$, it returns several equations, including
%\begin{equation}\label{eq:h22a}
%y(x^2-1-y)=z^2+4
%\end{equation} 
%and
%\begin{equation}\label{eq:h22b}
%y(x^2+4)=z^2+2.
%\end{equation} 
%
%To solve equations \eqref{eq:h22a} and \eqref{eq:h22b} by the method as in the proof of Proposition \ref{prop:h17to20}, we need to understand the possible odd prime factors of $z^2+2$ and $z^2+4$. 
To solve this equation by the same method, we need to understand the possible prime factors of $2z^2-1$. Let us discuss, more generally, the prime factors of an arbitrary quadratic form $ax^2+bxy+cy^2$. This can be done using the theory of quadratic residues. 
%Let us consider, more generally, odd prime factors of $z^2-a$ for any integer $a$. 
Given an odd prime $p$ and integer $a$ with $\text{gcd}(a,p)=1$, if there exists an integer $z$ such that $z^2 \equiv a (\text{mod } p)$, we say that $a$ is a quadratic residue modulo $p$ and write $\left(\frac{a}{p}\right)=1$, otherwise we say that $a$ is a quadratic non-residue modulo $p$ and write $\left(\frac{a}{p}\right)=-1$. In this notation, Proposition \ref{prop:factorsz2p1} states that if $p$ is an odd prime with $\left(\frac{-1}{p}\right) = 1$ then $p \equiv 1 (\text{mod } 4)$. In fact, the converse direction is also true, and this can be written in a compact form as
\begin{equation}\label{eq:suppl1}
\left(\frac{-1}{p}\right) = (-1)^{\frac{p-1}{2}}.
\end{equation} 
Moreover, we have the formula
%\begin{equation}\label{eq:suppl2}
$
\left(\frac{2}{p}\right) = (-1)^{\frac{p^2-1}{8}},
$
%\end{equation} 
the multiplicative law
%\begin{equation}\label{eq:prodleg}
$
\left(\frac{a b}{p}\right) = \left(\frac{a}{p}\right) \left(\frac{b}{p}\right),
$
%\end{equation} 
and, most importantly, the law of quadratic reciprocity, which states that if $p$ and $q$ are distinct odd prime numbers, then
\begin{equation}\label{eq:repos}
\left(\frac{p}{q}\right) \left(\frac{q}{p}\right) = (-1)^{\frac{p-1}{2}\frac{q-1}{2}}.
\end{equation} 
These formulae suffice to easily compute $\left(\frac{a}{p}\right)$ in general. For example,
%\begin{equation}\label{eq:reposm4}
%\left(\frac{-4}{p}\right)=\left(\frac{-1}{p}\right)\left(\frac{2}{p}\right)^2=\left(\frac{-1}{p}\right)=(-1)^{\frac{p-1}{2}}
%\end{equation} 
%and
%\begin{equation}\label{eq:reposm2}
%\left(\frac{-2}{p}\right)=\left(\frac{-1}{p}\right)\left(\frac{2}{p}\right) = (-1)^{\frac{p-1}{2}} (-1)^{\frac{p^2-1}{8}} = (-1)^{\frac{(p-1)(p+5)}{8}}.
%\end{equation}
%As another example, 
for odd $p\neq 3$, \eqref{eq:repos} implies that $\left(\frac{p}{3}\right) \left(\frac{3}{p}\right) = (-1)^{\frac{p-1}{2}\frac{3-1}{2}}=(-1)^{\frac{p-1}{2}}$, hence
\begin{equation}\label{eq:repos3}
\left(\frac{3}{p}\right) = 1 \quad \Leftrightarrow \quad p \equiv 1 \text{ or } 11 (\text{mod } 12).
\end{equation}

The prime factors of a quadratic form are characterised by the following well-known fact.

\begin{proposition}\label{prop:quadform}
Let $a,b,c$ be integers such that $\text{gcd}(a,b,c)=1$, and let $D=b^2-4ac$. Let $p$ be a prime factor of $ax^2+bxy+cy^2$ for some integers $x,y$. Then either $p$ is a divisor of $2D$, or $\left(\frac{D}{p}\right)=1$, or $p$ is a common divisor of $x$ and $y$.
\end{proposition}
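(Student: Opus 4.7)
The plan is to reduce the quadratic form to a difference of squares via completing the square. Multiplying $ax^2+bxy+cy^2$ by $4a$ and by $4c$ yields the two companion identities
\[
4a(ax^2+bxy+cy^2) = (2ax+by)^2 - D\,y^2, \qquad 4c(ax^2+bxy+cy^2) = (bx+2cy)^2 - D\,x^2,
\]
where $D=b^2-4ac$. Since $p$ divides $ax^2+bxy+cy^2$, it divides both right-hand sides.

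Now I would assume $p \nmid 2D$ (so $p$ is odd and coprime to $D$) and that $p$ is not a common divisor of $x$ and $y$; the goal is to conclude $\left(\frac{D}{p}\right)=1$. Without loss of generality $p \nmid y$ (otherwise $p \nmid x$, and the argument below proceeds identically using the second identity). From the first identity, $(2ax+by)^2 \equiv D\,y^2 \pmod{p}$, and because $y$ is invertible modulo $p$ this gives
\[
D \equiv \bigl((2ax+by)\,y^{-1}\bigr)^2 \pmod{p}.
\]
So $D$ is a nonzero square modulo $p$ (nonzero because $p \nmid D$), which by definition means $\left(\frac{D}{p}\right)=1$.

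I do not foresee any substantive obstacle: the proof is essentially routine once the completing-the-square identities are written down. The only mild subtlety is the need to keep both companion identities available (equivalently, to use the symmetry between the roles of $x$ and $y$); if one attempted the argument using only the first identity, one would have to separately analyse the case $p \mid y$ via properties of $a$ and $b$ modulo $p$, whereas invoking the second identity handles that case at once.
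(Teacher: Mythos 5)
Your proof is correct and follows essentially the same route as the paper: multiply by $4a$ to complete the square, obtain $(2ax+by)^2 \equiv D y^2 \pmod p$, and use the invertibility of $y$ modulo $p$ (the paper handles the case $p\mid y$ by appealing to symmetry rather than writing out the companion identity, but this is the same idea). No gaps.
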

\begin{proof}
Assume that an odd prime $p$ is a divisor of $ax^2+bxy+cy^2$, but not a divisor of $D$ and not a common divisor of $x$ and $y$. By symmetry, we may assume that $p$ is not a divisor of $y$, so that $\text{gcd}(y,p)=1$. Then there exists an integer $z$ such that $yz \equiv 1 (\text{mod } p)$. As $p$ is a divisor of $ax^2+bxy+cy^2$, we have 
$$
0 \equiv 4az^2(ax^2+bxy+cy^2) = z^2(2ax+by)^2 - D(yz)^2 \equiv (2axz+byz)^2 - D (\text{mod } p).
$$
Hence, $\left(\frac{D}{p}\right)=1$. 
\end{proof}

Let us apply Proposition \ref{prop:quadform} to some specific quadratic forms.

\begin{corollary}\label{cor:quadform}
Let $x,y$ be integers and let $p$ be an odd prime that is not a common divisor of $x$ and $y$. Then:
\begin{center}
\begin{tabular}{ |c|c|c|c|c|c|c| } 
 \hline
 if $p$ is a prime factor of & then \\ 
 \hline
 $x^2+y^2$ & $p \equiv 1 (\text{mod } 4)$ \\ 
 \hline
 $x^2+2y^2$ & $p \equiv 1 \text{ or } 3 (\text{mod } 8)$ \\ 
 \hline
 $x^2-2y^2$ & $p \equiv 1 \text{ or } 7 (\text{mod } 8)$ \\
 \hline
 $x^2+3y^2$ & $p=3$ or $p \equiv 1 (\text{mod } 3)$ \\  
 \hline
 $x^2-3y^2$ & $p=3$ or $p \equiv 1 \text{ or } 11 (\text{mod } 12)$ \\ 
 \hline
\end{tabular}
\end{center}
\end{corollary}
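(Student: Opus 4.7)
The plan is to treat each of the five quadratic forms as an instance of Proposition \ref{prop:quadform} and then translate the condition $\left(\frac{D}{p}\right)=1$ into a congruence on $p$ via the supplementary laws and quadratic reciprocity recalled just before the statement. For each form I first record the discriminant $D=b^2-4ac$: these are $-4,\, -8,\, 8,\, -12,\, 12$ respectively. Since $\gcd(a,b,c)=1$ in all five cases and $p$ is odd and not a common divisor of $x,y$, Proposition \ref{prop:quadform} forces either $p\mid D$ (which, for the forms with $3y^2$, is exactly the extra prime $p=3$ listed in the table) or $\left(\frac{D}{p}\right)=1$. So the task reduces to computing $\left(\frac{D}{p}\right)$.

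For the first form, $\left(\frac{-4}{p}\right)=\left(\frac{-1}{p}\right)$, which by \eqref{eq:suppl1} equals $1$ iff $p\equiv 1\pmod 4$. For the second and third forms, I would use $\left(\frac{\pm 8}{p}\right)=\left(\frac{\pm 2}{p}\right)$ and apply the multiplicative law together with \eqref{eq:suppl1} and the formula $\left(\frac{2}{p}\right)=(-1)^{(p^2-1)/8}$. A short case analysis on $p\pmod 8$ shows that $\left(\frac{-2}{p}\right)=1$ exactly when $p\equiv 1,3\pmod 8$, and $\left(\frac{2}{p}\right)=1$ exactly when $p\equiv 1,7\pmod 8$, matching the second and third rows of the table.

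For the last two forms the key ingredient is \eqref{eq:repos3}, which describes $\left(\frac{3}{p}\right)$ in terms of $p\pmod{12}$. The fifth row is then immediate: $\left(\frac{12}{p}\right)=\left(\frac{3}{p}\right)$, so we get $p\equiv 1,11\pmod{12}$, together with the exceptional prime $p=3$ dividing $D=12$. For the fourth row, $\left(\frac{-12}{p}\right)=\left(\frac{-1}{p}\right)\left(\frac{3}{p}\right)$; combining \eqref{eq:suppl1} with \eqref{eq:repos3} and splitting into residues mod $12$ shows that this product equals $1$ precisely when $p\equiv 1\pmod 3$ (the cases $p\equiv 1,7\pmod{12}$), again together with $p=3$.

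I do not expect any serious obstacle here: the only real work is the bookkeeping in the mod $8$ and mod $12$ case analyses, and making sure the primes dividing $2D$ (namely $2$, which is excluded by hypothesis, and $3$ in the last two rows) are correctly either absorbed into the congruence conditions or listed separately. The cleanest write-up is probably a single paragraph per form, each of the shape ``compute $D$, invoke Proposition \ref{prop:quadform}, evaluate the Legendre symbol''.
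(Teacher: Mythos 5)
Your proposal is correct and follows exactly the paper's approach: compute the discriminant $D$, apply Proposition \ref{prop:quadform}, and evaluate $\left(\frac{D}{p}\right)$ via the supplementary laws and \eqref{eq:repos3}, with the prime $3$ dividing $D$ listed separately in the last two rows. The paper writes out only the $x^2-3y^2$ case and declares the others similar, whereas you sketch all five; the computations you indicate all check out.
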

\begin{proof}
Let us prove for example the statement about the prime divisors of $x^2-3y^2$. The proofs of all other statements are similar. We will apply Proposition \ref{prop:quadform} with $a=1$, $b=0$, $c=-3$, hence $D=b^2-4ac=12$. If $p\neq 3$ is an odd prime, it is not a divisor of $2D=24$. Hence, if $p$ is a divisor of $x^2-3y^2$ and $\text{gcd}(p,x,y)=1$, Proposition \ref{prop:quadform} implies that $\left(\frac{12}{p}\right)=1$. Because $\left(\frac{12}{p}\right)=\left(\frac{3}{p}\right)\left(\frac{2}{p}\right)^2=\left(\frac{3}{p}\right)$, this is equivalent to $\left(\frac{3}{p}\right)=1$, and \eqref{eq:repos3} implies that $p \equiv 1 \text{ or } 11 (\text{mod } 12)$.
\end{proof}

We will now use Corollary \ref{cor:quadform} to solve equation \eqref{eq:h21}.

\begin{proposition}\label{prop:h21}
Equation \eqref{eq:h21} has no integer solutions.
\end{proposition}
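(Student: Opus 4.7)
The plan is to combine a simple parity observation with the prime-factor characterization provided by Corollary \ref{cor:quadform}, applied on both sides of the equation. The key point is that $x^2+2$ divides the right-hand side $2z^2-1$, so every odd prime factor of $x^2+2$ inherits the congruence restrictions of prime factors of both $X^2+2Y^2$ and $X^2-2Y^2$, and these two restrictions together are impossible modulo $8$.

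First I would note that $2z^2-1$ is odd, so $y(x^2+2)$ is odd; in particular $x^2+2$ is odd, forcing $x$ to be odd. Then $x^2\equiv 1\pmod 8$, hence $x^2+2\equiv 3\pmod 8$.

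Next I would analyze the odd prime divisors of $x^2+2$. Applying Corollary \ref{cor:quadform} to the form $X^2+2Y^2$ at $(X,Y)=(x,1)$ (the gcd hypothesis is automatic since $Y=1$), every odd prime $p\mid x^2+2$ satisfies $p\equiv 1$ or $3\pmod 8$. On the other hand, $x^2+2$ divides $2z^2-1=-(1^2-2z^2)$, so $p\mid 2z^2-1$; applying the corollary to the form $X^2-2Y^2$ at $(X,Y)=(1,z)$ (again the gcd condition is automatic), we get $p\equiv 1$ or $7\pmod 8$. The only common residue is $p\equiv 1\pmod 8$, so every odd prime factor of $x^2+2$ is $\equiv 1\pmod 8$. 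Since $x^2+2$ is a positive odd integer, it equals the product of its odd prime factors (with multiplicity), and a product of residues $\equiv 1\pmod 8$ is again $\equiv 1\pmod 8$. This contradicts $x^2+2\equiv 3\pmod 8$.

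The argument has essentially no obstacles beyond spotting the correct pair of quadratic forms; once they are in view, the computation is a clean residue check modulo $8$. The degenerate possibility $x^2+2=1$ (which would have no odd prime factors) does not occur over the integers, so the reasoning is never vacuous.
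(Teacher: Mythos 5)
Your proof is correct and uses the same key ingredients as the paper: the factorization $y(x^2+2)=2z^2-1=-(1^2-2z^2)$, Corollary \ref{cor:quadform} for the form $X^2-2Y^2$, and a contradiction modulo $8$. The only difference is that your invocation of the form $X^2+2Y^2$ to constrain the prime factors of $x^2+2$ is redundant: the paper observes directly that every positive divisor of $2z^2-1$ is $\equiv 1$ or $7 \pmod 8$ while $x^2+2$ is always $\equiv 2$, $3$, or $6 \pmod 8$, which already finishes the argument.
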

\begin{proof}
Let $p$ be any prime factor of $2z^2-1$. Because $2z^2-1$ is odd, $p\neq 2$.  Because $2z^2-1=-(1^2-2z^2)$, Corollary \ref{cor:quadform} implies that $p \equiv 1 \text{ or } 7 (\text{mod } 8)$. All positive divisors of $2z^2-1$ are the products of such primes and are therefore also $1$ or $7$ modulo $8$. Because $x^2+2$ is positive and is never $1$ or $7$ modulo $8$, it cannot be a divisor of $2z^2-1$, and equation \eqref{eq:h21} has no integer solutions.
%Let $p$ be any prime factor of $x^2+2$. Then $p$ is also a prime factor of $1-2z^2$. Because $1-2z^2$ is odd, $p\neq 2$. Because $x^2+2=x^2+2\cdot 1^2$, Corollary \ref{cor:quadform} implies that $p \equiv 1 (\text{mod } 8)$ or $p \equiv 3 (\text{mod } 8)$. On the other hand, because $1-2z^2=1^2-2z^2$, Corollary \ref{cor:quadform} also implies that $p \equiv 1 (\text{mod } 8)$ or $p \equiv 7 (\text{mod } 8)$. Hence, all the prime factors of $x^2+2$ are in the form $p=8k+1$. But then $x^2+2 \equiv 1 (\text{mod } 8)$, and $x^2 \equiv 7 (\text{mod } 8)$, which is a contradiction.
\end{proof}

%We next extend the proofs of Propositions and \ref{prop:h17to20} and \ref{prop:h21} and formulate a general algorithm to solve equations in the form
%\begin{equation}\label{eq:quadform}
%QT = aR^2+bRS+cS^2,
%\end{equation}
%where $a,b,c$ are integers and $Q,T, R,S$ are polynomials with integer coefficients, $Q$ and $T$ are non-constant. The algorithm either decides whether \eqref{eq:quadform} has an integer solution, or gives up.
%
%\begin{algorithm}\label{alg:quadform} Denote $U=aR^2+bRS+cS^2$.
%\begin{itemize}
%\item[(i)] Let $s_1 \in \{-1,1\}$ and $s_2=\{-1,1\}$, so that there are $4$ possible combinations of $(s_1,s_2)$. Call a combination feasible if the optimal value $t^*$ of the optimization problem
%$$
%\max t, \quad \text{subject to} \quad s_1 Q>t, \quad s_2 T>t, \quad QT=U
%$$
%is $+\infty$. For non-feasible combinations, for each $t\in[0,t^*]$, check whether either $s_1Q=t$ or $s_2T=t$ is possible. Give up if this is difficult, but if not, either find a solution to \eqref{eq:quadform}, or conclude that there is no solution with $s_1 Q>0$ and $s_2 T>0$. For each feasible combination, do substitutions $Q \to s_1Q$ and $T \to s_2T$ to make all $Q,T,U$ positive. As a result, get at most $4$ versions of the original equation \eqref{eq:quadform} in which we may assume that Q,T,U are all positive. Do all steps below on each of these versions. Report the equation as solved if we find an integer solution in any of the versions or prove that it does not exists for \emph{all} versions. 
%\end{itemize}
%\end{algorithm}

Proposition \ref{prop:h21} finishes the analysis of the equations of size $H \leq 21$. 

%of size $H=22$, the program returns sign variations of equation \eqref{eq:h22b}, 
%$
%y(x^2+4)=z^2-2
%$
%and
%$
%y(x^2-4)=z^2-2,
%$
%which can be treated in a way similar to \eqref{eq:h22b}. There is also one more equation of size $H=22$ of a different type, which we will discuss in the next subsection.

%For higher $H$, the number of equations solvable by this method increases. We next formulate an (informal) algorithm explaining how to use Proposition \ref{prop:quadform} to prove that some equations have no integer solutions. 

For higher $H$, the number of equations solvable by this method increases, and it is useful to automate the described method. We next present it as an (informal) algorithm that, for each equation, either proves that it has no integer solutions, or gives up.

\begin{algorithm}\label{alg:quadform}~ 
Assume that the equation is presented in the form
\begin{equation}\label{eq:prodform}
\prod_{j=1}^k P_j = Q,
\end{equation}
where $P_1,\dots,P_k,Q$ are non-constant polynomials in variables $x_1,\dots,x_n$ with integer coefficients. Then return that the equation is ``Solved'' if it is  possible to use Proposition \ref{prop:quadform} to find integers $m\geq 3$ and $0\leq r_1 < \dots < r_l < m$ such that 
\begin{itemize}
\item[(a)] for any integers $x_1,\dots,x_n$, all positive divisors of $Q(x_1,\dots,x_n)$ must be equal to some $r_j$ modulo $m$, but
\item[(b)] there is no solution $x_1,\dots,x_n$ of \eqref{eq:prodform} modulo $m$ such that all $|P_j(x_1,\dots,x_n)|$ are equal to some $r_j$ modulo $m$.
\end{itemize}
\end{algorithm}

Given an equation $P=0$, there are infinitely many way to represent it in the form \eqref{eq:prodform}, and, in general, it is non-trivial to find a representation for which Algorithm \ref{alg:quadform} works. We start with the following trivial method, which already suffices to solve many equations.

\begin{algorithm}\label{alg:quadtriv}~ 
The input is polynomial Diophantine equation $P(x_1,\dots,x_n)=0$. For each $1\leq i \leq n$ let $Q_i=P(x_1,\dots,x_{i-1}, 0, x_{i+1}, \dots, x_n)$ be the polynomial resulting from substituting $x_i=0$ into $P$. Then polynomial $P-Q_i$ is reducible over ${\mathbb Q}$, because at least $x_i$ is a non-trivial factor. Represent $P-Q_i = \prod_{j=1}^k P_j$ as a product of irreducible factors, and then run Algorithm \ref{alg:quadform} for the representation 
\begin{equation}\label{eq:quadtriv}
\prod_{j=1}^k P_j = - Q_i.
\end{equation}
If the algorithm returns ``Solved'' for some $i$, stop and report the equation as solved.
\end{algorithm}

Table \ref{tab:H25} enumerates all equations up to size $H \leq 25$ that are not excluded by the criteria described in Section \ref{sec:trivial} but are solvable by Algorithm \ref{alg:quadtriv}. For each equation, we give the representation \eqref{eq:quadtriv} that worked, and the integers $m$ and $r_1,\dots,r_l$ returned by Algorithm \ref{alg:quadform}. Each equation in the table has no integer solutions, and we can write a unified human-readable proof of this fact as follows: The equation (insert equation from Column 2) can be rewritten in the form \eqref{eq:quadtriv} as (insert representation from Column 3). Then Proposition \ref{prop:quadform} implies that the right-hand side can only have positive divisors equal to $r_1,\dots,r_l$ modulo $m$, but the equation has no solution modulo $m$ for which all the factors of the left-hand side satisfy this condition. 

Starting with $H\geq 26$, equations solvable by Algorithm \ref{alg:quadtriv} will be excluded from the analysis and not listed.

\begin{table}
\begin{center}
\begin{tabular}{ |c|c|c|c|c| }
 \hline
 $H$ & Equation & Representation \eqref{eq:quadtriv} & $m$ & $r_1,\dots,r_l$  \\ 
 \hline\hline
 $17$ & $1+x^2 y+y^2+z^2=0$ & $y(x^2+y)=-1-z^2$ & $4$ & $1,2$ \\ 
 \hline
 $19$ & $1+3 y+x^2 y+z^2=0$ & $(3+x^2) y=-1-z^2$ & $4$ & $1,2$ \\ 
 \hline
 $21$ & $1+2 y+x^2 y-2 z^2=0$ & $(2+x^2) y=-1+2 z^2$ & $8$ & $1,7$ \\ 
 \hline
 $22$ & $2+4 y+x^2 y+z^2=0$ & $(4+x^2) y=-2-z^2$ & $8$ & $1,2,3,6$ \\ 
 \hline
 $22$ & $2+4 y+x^2 y-z^2=0$ & $(4+x^2) y=-2+z^2$ & $8$ & $1,2,6,7$ \\ 
 \hline
 $22$ & $2-4 y+x^2 y-z^2=0$ & $(-2+x)(2+x)y=-2+z^2$ & $8$ & $1,2,3,6$ \\ 
 \hline
 $22$ & $4-y+x^2 y+y^2+z^2=0$ & $y(-1+x^2+y)=-4-z^2$ & $16$ & $1,2,4,5,8,9,10,13$ \\ 
 \hline
 $25$ & $1+x^2+x^2 y+y^2+y z^2=0$ & $y (x^2+y+z^2)=-1-x^2$ & $4$ & $1,2$ \\ 
 \hline
 $25$ & $1+4 y+x^2 y+2 z^2=0$ & $(4+x^2) y=-1-2 z^2$ & $8$ & $1,3$ \\ 
 \hline
 $25$ & $1+4 y+x^2 y-2 z^2=0$ & $(4+x^2) y=-1+2 z^2$ & $8$ & $1,7$ \\ 
 \hline
 $25$ & $1-4 y+x^2 y-2 z^2=0$ & $(-2+x)(2+x)y=-1+2 z^2$ & $8$ & $1,7$ \\ 
 \hline
 $25$ & $1+2 y+x^2 y+2 z+2z^2=0$ & $(2+x^2) y=-1-2 z-2z^2$ & $4$ & $1$ \\ 
 \hline
 $25$ & $1+4 y+x^2 y+y^2+z^2=0$ & $y(4+x^2+y)=-1-z^2$ & $4$ & $1,2$ \\ 
 \hline
 $25$ & $1+2 x+x^3+x y^2+z^2=0$ & $x (2+x^2+y^2)=-1-z^2$ & $4$ & $1,2$ \\ 
 \hline
\end{tabular}
\caption{\label{tab:H25} Equations of size $H\leq 25$ solvable by Algorithm \ref{alg:quadtriv}.}
\end{center} 
\end{table}

\subsection{$H \geq 22$: Markoff-type equations and Vieta jumping}

%The only equations of size $H\leq 24$ that are not excluded by the criteria of Section \ref{sec:trivial} and are not 
%%of the type discussed in Section \ref{sec:residues} is
%solvable by Algorithm \ref{alg:quadtriv} are the equations
%\begin{equation}\label{eq:h22mark}
%x^2 + y^2 - z^2 = xyz - 2
%\end{equation} 
%%of size $H=22$. In fact, the only equations up to $H\leq 24$ we have not solved so far are \eqref{eq:h22mark} and the equation
%and
%\begin{equation}\label{eq:h24mark}
%x^2 + y^2 + z^2 = xyz - 4
%\end{equation} 
%of sizes $H=22$ and $H=24$, respectively.
%These are equations of the type
%\begin{equation}\label{eq:genmark}
%a x^2 + b y^2 + c z^2 = d xyz + e
%\end{equation}
%which generalizes the famous Markoff equation
%$$
%x^2 + y^2 + z^2 = 3xyz.
%$$ 
The smallest equation that is not excluded by the criteria of Section \ref{sec:trivial} and is not solvable by Algorithm \ref{alg:quadtriv} is the equation
\begin{equation}\label{eq:h22mark}
x^2 + y^2 - z^2 = xyz - 2
\end{equation} 
of size $H=22$. Equations of the form $a x^2 + b y^2 + c z^2 = d xyz + e$ are sometimes called Markoff-type equations because they generalize the famous Markoff equation $x^2 + y^2 + z^2 = 3xyz$. One of the standard approaches for solving such equations is the technique of Vieta jumping and infinite descent. One assumes the existence of a solution, and then uses this solution to produce another solution, which is strictly ``smaller'' in a certain sense. The process is then repeated to get a contradiction. We next illustrate this technique by presenting the solution of equation \eqref{eq:h22mark}. 
%proof \cite{B2021a} that equation \eqref{eq:h22mark} has no integer solutions.

%\begin{proposition}\label{prop:h22}
%Equation \eqref{eq:h22mark} has no integer solutions.
%\end{proposition}
%\begin{proof}
%By contradiction, assume that an integer solution to \eqref{eq:h22mark} exists, and choose a solution $(x,y,z)$ with $|x|+|y|+|z|$ minimal. Let us replace $x$ by a new variable $t$, to get a quadratic equation in $t$
%$$
%t^2 - yz \cdot t + y^2 - z^2 +2 = 0.
%$$
%Because $(x,y,z)$ is a solution to \eqref{eq:h22mark}, one root of this quadratic equation is $t_1=x$. By the properties of quadratic equations, the second root $t_2$ satisfies $t_1+t_2=yz$, or $t_2=yz-x$. In other words, if $(x,y,z)$ is a solution to \eqref{eq:h22mark}, then $(yz-x,y,z)$ is also a solution. Because we have selected a solution with $|x|+|y|+|z|$ minimal, hence we must have $|yz-x|\geq |x|$. This implies that $yz$ and $x$ cannot have the same signs, hence $xyz \leq 0$.

\begin{proposition}\label{prop:h22}
Equation \eqref{eq:h22mark} has no integer solutions.
\end{proposition}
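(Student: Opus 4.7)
\emph{Proof proposal.} The plan is a pure Vieta-jumping/infinite descent argument; no modular arithmetic is needed. Let $P(x,y,z):=x^2+y^2-z^2+2-xyz$. The sign symmetries $(x,y,z)\mapsto(\varepsilon_1 x,\varepsilon_2 y,\varepsilon_3 z)$ with $\varepsilon_1\varepsilon_2\varepsilon_3=1$ all preserve $P$, so from any integer solution I may pass to one with $x,y\ge 0$; the edge cases $x=0$ or $y=0$ force $(z-y)(z+y)=2$ or $(z-x)(z+x)=2$ respectively, neither of which has an integer solution (the two factors share parity). Hence I may assume $x,y>0$. Assuming for contradiction that $S:=\{(x,y,z)\in\mathbb{Z}^3:x,y>0,\,P(x,y,z)=0\}$ is non-empty, pick $(x,y,z)\in S$ minimising $x+y$ and, by the $x\leftrightarrow y$ symmetry, arrange that $x\le y$.

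The key preliminary move is a Vieta jump in $z$. As a quadratic in $z$, the equation reads $z^2+xyz-(x^2+y^2+2)=0$; its two roots $z_-,z_+$ satisfy $z_-+z_+=-xy\in\mathbb{Z}$ and $z_-z_+=-(x^2+y^2+2)<0$, so both are integers of opposite sign. In particular $(x,y,z_+)\in S$ with $z_+>0$ is also a minimiser of $x+y$, and it is to this triple that I apply Vieta in $x$ and in $y$.

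The other $x$-root of $P(\,\cdot\,,y,z_+)=0$ is $x'=yz_+-x$. The case $x'=0$ gives $y^2+2=z_+^2$, hence $(z_+-y)(z_++y)=2$, impossible. If $x'<0$, the symmetry $(x,y,z)\mapsto(-x,y,-z)$ shows $(-x',y,-z_+)\in S$, and minimality of $x+y$ forces $-x'+y\ge x+y$, i.e.\ $yz_+\le 0$, contradicting $y,z_+>0$. Hence $x'\ge x$, giving $yz_+\ge 2x$; the identical argument in $y$ yields $xz_+\ge 2y$. Multiplying the two gives $z_+^2\ge 4$, while substituting $xyz_+\ge 2y^2$ into $z_+^2+xyz_+=x^2+y^2+2$ yields
\[
z_+^2 \;\le\; x^2+y^2+2-2y^2 \;=\; x^2-y^2+2 \;\le\; 2,
\]
using $x\le y$ at the end. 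This contradicts $z_+^2\ge 4$, so $S$ must be empty.

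The main obstacle I anticipate is noticing that one must Vieta-jump in $z$ first: only with $z_+>0$ do the Vieta-in-$x$ and Vieta-in-$y$ minimality inequalities push in the same direction and combine with the equation to force a sign-free contradiction between a lower and an upper bound on $z_+^2$.
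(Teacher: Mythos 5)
Your proof is correct and rests on the same Vieta-jumping descent that the paper uses for this equation; the difference is purely organisational — you normalise signs via the symmetries with $\varepsilon_1\varepsilon_2\varepsilon_3=1$, minimise $x+y$, and perform a preliminary jump in $z$ to make the $z$-root positive, which lets the two minimality inequalities $yz_+\ge 2x$ and $xz_+\ge 2y$ sandwich $z_+^2$ between $4$ and $x^2-y^2+2\le 2$, avoiding the paper's case analysis on which of $|x|,|y|,|z|$ is maximal. All the individual steps (the parity argument ruling out $(z\mp y)(z\pm y)=2$, the sign bookkeeping when a jumped root is negative, and the final two-sided bound) check out.
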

\begin{proof}
We first present a human-readable proof, developed by Will Sawin and Fedor Petrov \cite{B2021a}. 
By contradiction, assume that an integer solution to \eqref{eq:h22mark} exists, and choose some specific solution $(x,y,z)$. Let us replace $x$ by a new variable $t$, to get a quadratic equation in $t$
$$
t^2 - yz \cdot t + y^2 - z^2 +2 = 0.
$$
Because $(x,y,z)$ is a solution to \eqref{eq:h22mark}, one root of this quadratic equation is $t_1=x$. By the properties of quadratic equations, the second root $t_2$ satisfies $t_1+t_2=yz$, or $t_2=yz-t_1=yz-x$. In other words, if $(x,y,z)$ is a solution to \eqref{eq:h22mark}, then $(yz-x,y,z)$ is also a solution. 
%This can be also checked directly:
%$$
%(yz-x)^2 - (yz-x)yz = (yz - x)(yz - x - yz) = x^2 - xyz = -y^2 +z^2 - 2, 
%$$
%where the last equality follows from the fact that $(x,y,z)$ solves \eqref{eq:h22mark}.

Now, fix any solution $(x,y,z)$ with $|x|+|y|+|z|$ minimal. First, assume that $|x|=\max(|x|,|y|,|z|)$.   Because $|x|+|y|+|z|$ is minimal, we must have $|x|\leq |yz - x|$. But then
$$
x^2 \leq |yz - x| |x| = |y^2+2-z^2|,
$$
where the equality follows from \eqref{eq:h22mark}. Because $|x|=\max(|x|,|y|,|z|)$, the last inequality is possible only if $z^2 \leq 2$. However, \eqref{eq:h22mark} with $z=0$ and $z=\pm 1$ reduces to equations $x^2 + y^2 + 2 = 0$ and $x^2 \mp xy + y^2 + 1 = 0$, and none of them has integer solutions. 
%Here we see that if |x|=max(|x|,|y|,|z|), then the second root x~ w.r.t. to x equals x~=(y2+2−z2)/x and |x~|<|x|, we may perform a jump. 

The case $|y|=\max(|x|,|y|,|z|)$ can be excluded similarly, hence it must be $|z|=\max(|x|,|y|,|z|)$. If $|z|=|x|$, then $z^2=x^2$, and \eqref{eq:h22mark} implies that $y$ is a divisor of $2$, but trying all divisors we find no solutions, a contradiction. The case $|z|=|y|$ is excluded similarly, hence $|z|>\max(|x|,|y|)$. By changing signs, we may assume that $x,y>0$. If $z\geq 0$, then $z>\max(x,y)$, and
$$
z^2+xyz \geq (x+1)^2+y^2 > x^2+y^2+2,
$$
which is a contradiction with \eqref{eq:h22mark}. Hence, we must have $z<0$. In this case, note that $(x,y,-xy-z)$ is another solution to \eqref{eq:h22mark}. 
% Indeed,
%$$
%(-xy-z)^2 + xy(-xy-z) = (-xy-z+xy)(-xy-z)=z^2+xyz=x^2+y^2+2.
%$$ 
%Note that $z(-xy-z) = -x^2-y^2-2 < 0$, hence $z<0$ implies that $-xy-z>0$.  
Because $|x|+|y|+|z|$ is minimal, we must have $|-xy-z|\geq |z|$. But $|-xy-z|=-xy-z$, and $|z|=-z$, and inequality $-xy-z \geq -z$ is impossible if $x,y>0$.

We next present a shorter, computer-assisted proof. If $(x,y,z)$ is a solution to \eqref{eq:h22mark} with $|x+|y|+|z|$ minimal, then we must have $|yz-x|\geq |x|$, $|xz-y|\geq |y|$ and $|-xy-z|\geq |z|$. Now consider optimization problem
\begin{equation}\label{eq:h22opt}
\begin{split}
& \max\limits_{(x,y,z,t)\in {\mathbb R}^4} t \quad \text{subject to} \quad x^2 + y^2 - z^2 = xyz - 2, \\
& |x|\geq t, \, |y|\geq t, \, |z|\geq t, \, |yz-x|\geq |x|, \, |xz-y|\geq |y|, \, |-xy-z|\geq |z|. 
\end{split}
\end{equation} 
Any computer algebra system can easily solve this problem and output that the optimal value $t^*$ is finite. This means that any solution $(x,y,z)$ with $|x|+|y|+|z|$ minimal must have $|x|\leq t^*$, or $|y|\leq t^*$, or $|z|\leq t^*$. But there is only a finite number of integers $t$ with $|t|\leq t^*$, and, for each specific $t$, it is easy to check that \eqref{eq:h22mark} has no integer solutions with $x=t$ (or with $y=t$, or with $z=t$).

In our specific case the solution to problem \eqref{eq:h22opt} is in fact  $t^*=0$, so it is left to check that the equation has no integer solutions with $x=0$, or $y=0$, or $z=0$, which is straightforward.
\end{proof} 

%\begin{exercise}\label{ex:jumping}
%By adapting the proof of Proposition \ref{prop:h22}, prove that equation 
%\begin{equation}\label{eq:h26mark}
%x^2+y^2-z^2 = xyz - 6,
%\end{equation} 
%has no integer solutions. 
%\end{exercise}

The second proof of Proposition \ref{prop:h22} can be easily turned into a general algorithm.

\begin{algorithm}\label{alg:vieta}~ 
\begin{itemize}
\item Call variable $x_i$ of a general equation $P(x_1,\dots,x_n)=0$ ``special'' if the equation can be written in the form
$$
a_i x_i^2 + Q_i x_i + R_i = 0
$$
where $|a_i|=1$ and $Q_i$ and $R_i$ are polynomials in other variables. 
\item Solve optimization problem of maximizing $t$ over $(x_1,\dots,x_n,t)\in {\mathbb R}^{n+1}$ subject to constraints $P=0$, $|x_i|\geq t$ for each $i$, and $|-(Q_i/a_i)-x_i|\geq |x_i|$ for each special variable $x_i$. Let $t^*$ be the optimal value.
\item If $t^*=\infty$, give up. Otherwise for each integer $t$ such that $|t|\leq t^*$ and each $i=1,\dots,n$, try to prove that the original equation has no integer solutions with $x_i=t$. If this is difficult, give up, but if succeed, this implies that the original equation has no integer solutions.
\end{itemize}
\end{algorithm}

Table \ref{tab:vieta} lists equations up to $H \leq 30$ that are not excluded earlier but are solvable by Algorithm \ref{alg:vieta}. For each equation, we list the value $t^*$ returned by the Algorithm. Each equation in the table has no integer solutions, and the proof of this fact can be written in a unified way as follows: By contradiction, assume that integer solution exists. Let us choose a solution with $\sum |x_i|$ minimal. Then, in the notation of Algorithm \ref{alg:vieta}, we have $|-(Q_i/a_i)-x_i|\geq |x_i|$ for each special variable $x_i$, hence we must have $|x_i|\leq t^*$ for some $i$. However, a direct verification shows that the equation has no integer solution satisfying the last condition, a contradiction.

\begin{table}
\begin{center}
\begin{tabular}{ |c|c|c|c|c|c| }
 \hline
 $H$ & Equation & $t^*$ & $H$ & Equation & $t^*$\\ 
 \hline\hline
 $22$ & $2+x^2+y^2+x y z-z^2=0$ & $0$ & $29$ & $3+x^2+y+x^2 y+y^2+y z^2=0$ & $1.40$ \\ 
 \hline
 $24$ & $4+x^2+y^2+x y z+z^2=0$ & $3.36$ & $29$ & $1+x^2+2 y+x^2 y-y^2+y z^2=0$ & $1$\\ 
 \hline
 $25$ & $1+y^2+x^2 y z+z^2=0$ & $1.55$ & $30$ & $6+y^2+x^2 y z+z^2=0$ & $1.91$ \\ 
 \hline
 $27$ & $3+x^2+x^2 y-y^2+y z^2=0$ & $1.14$ & $30$ & $6+x^2+x^2 y+y^2+y z^2=0$ & $1.86$\\ 
 \hline
 $27$ & $1+x^2+y+x^2 y-y^2+y z^2=0$ & $1$ & $30$ & $6-x^2+x^2 y+y^2+y z^2=0$ & $1.44$\\ 
 \hline
 $28$ & $4+y^2+x^2 y z+z^2=0$ & $1.80$ & $30$ & $2+x^2+y^2+x^2 y z+z^2=0$ & $1.89$\\ 
 \hline
 $28$ & $8+x^2+y^2+x y z+z^2=0$ & $3.61$ & $30$ & $2+x^2+y^2+x^2 y z-z^2=0$ & $1.41$\\ 
 \hline
 $28$ & $2+3 x+x^2 y+y^2+y z^2=0$ & $1.71$ & $30$ & $4+x^2+y+x^2 y-y^2+y z^2=0$ & $1.13$\\ 
 \hline
 $29$ & $5+y^2+x^2 y z+z^2=0$ & $1.86$ & $30$ & $2+x^2+2 y+x^2 y-y^2+y z^2=0$ & $1$\\ 
 \hline
\end{tabular}
\caption{\label{tab:vieta} Equations of size $H\leq 30$ solvable by Algorithm \ref{alg:vieta}.}
\end{center} 
\end{table}

To demonstrate the power of the Vieta jumping method we next prove that 
%Algorithm \ref{alg:vieta} 
it can solve all equations in the form
\begin{equation}\label{eq:markwithlin}
x^2+y^2+z^2+ax+by+cz = dxyz + e.
\end{equation}

\begin{proposition}\label{prop:markgen}
There is an algorithm that, given integers $a,b,c,d,e$, determines in finite time (in fact, in time polynomial in $a,b,c,d,e$) whether equation \eqref{eq:markwithlin} has an integer solution.
\end{proposition}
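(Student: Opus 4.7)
The plan is to dispose of the case $d=0$ by invoking Theorem \ref{th:quadratic}, and for $d\ne 0$ to apply Vieta jumping to show that every integer solution can be reduced to one in which the smallest of $|x|,|y|,|z|$ is bounded by an explicit constant depending polynomially on $a,b,c,d,e$. The algorithm will then iterate over all possible values of that smallest coordinate and, for each one, reduce \eqref{eq:markwithlin} to a two-variable quadric decidable by Theorem \ref{th:quadratic}.

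If $d=0$, then \eqref{eq:markwithlin} is a quadratic polynomial equation in three variables, so Theorem \ref{th:quadratic} applies directly. Assume henceforth $d\ne 0$. Regarded as a quadratic in $x$, \eqref{eq:markwithlin} reads
$$
x^2-(dyz-a)x+(y^2+z^2+by+cz-e)=0,
$$
so from any integer solution $(x,y,z)$ we obtain another, $(x',y,z)$, with $x'=dyz-a-x$ and $xx'=y^2+z^2+by+cz-e$; analogous jumps act on $y$ and on $z$. Suppose an integer solution exists, pick one minimising $|x|+|y|+|z|$, and after permuting variables arrange $|x|\ge |y|\ge |z|$. Minimality under the $x$-jump forces $|x'|\ge |x|$, which combined with the product formula yields
$$
x^2\le |xx'|=|y^2+z^2+by+cz-e|\le 2y^2+(|b|+|c|)|y|+|e|,
$$
and hence $|y|\ge |x|/\sqrt{2}-O(1)$. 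On the other hand, the equation itself gives
$$
|d|\,|x||y||z|\le 3x^2+(|a|+|b|+|c|)|x|+|e|,
$$
i.e.\ $|y||z|\le 3|x|/|d|+O(1)$. Combining the two inequalities shows that $|z|\le 3\sqrt{2}/|d|+o(1)$ as $|x|\to\infty$, so there exists an explicit constant $B=B(a,b,c,d,e)$, polynomial in its arguments, such that $\min(|x|,|y|,|z|)\le B$ for any minimal solution.

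The algorithm is then: for each $v\in\{x,y,z\}$ and each integer $v_0$ with $|v_0|\le B$, substitute $v=v_0$ into \eqref{eq:markwithlin} to obtain a quadratic equation in the two remaining variables, and decide solvability using the effective two-variable algorithm (\cite{Alpern} or Theorem \ref{th:quadratic}). Return ``Yes'' if any of the resulting $O(B)$ quadrics has an integer solution, and ``No'' otherwise. Correctness is immediate from the Vieta descent above, and polynomial-time complexity follows provided $B$ is made explicit. The main technical obstacle is thus purely arithmetic: one must track the implicit constants in the $O(1)$ and $o(1)$ terms carefully enough to obtain an explicit polynomial bound on $B$, and must separately handle the regime where $|x|$ is below the threshold at which the asymptotic bound on $|z|$ takes effect, for instance by direct enumeration in that bounded regime.
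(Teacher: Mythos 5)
Your proposal is correct and rests on the same core strategy as the paper's proof: a Vieta jump on a minimal solution to bound $\min(|x|,|y|,|z|)$ by an explicit polynomial in the coefficients, followed by enumeration of that coordinate and reduction to two-variable quadratics decidable by Theorem \ref{th:quadratic} or \cite{Alpern}. Where you differ is in how the bound is extracted, and your route is genuinely leaner. The paper first reduces to non-negative solutions (running the algorithm four times over sign patterns), completes the squares to the form \eqref{eq:markwithlin2}, and derives a two-sided estimate $\frac{25}{108}dyz \le x \le \frac{5}{8}dyz$ before reaching a dichotomy on the sign of a quadratic $Q(z)$. You instead work sign-agnostically with absolute values: minimality gives $|x'|\ge|x|$, hence $x^2\le|xx'|$, and the product-of-roots identity turns this into a lower bound $|y|\gtrsim |x|/\sqrt{2}$, while the equation itself gives $|d|\,|y||z|\lesssim 3|x|$; dividing yields $|z|=O(1/|d|)+O(1)$ directly. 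This avoids the four-fold sign case analysis and the completed-square bookkeeping entirely, at the cost of having to make the $O(1)$ terms explicit (routine, as you note) and of handling the degenerate cases $x=0$ and $y=0$ separately --- but in those cases all variables are already bounded since $|x|$ is the maximum, so nothing is lost. One cosmetic caveat: permuting variables to arrange $|x|\ge|y|\ge|z|$ also permutes the linear coefficients $a,b,c$, so your intermediate constants should be stated symmetrically (e.g.\ using $|a|+|b|+|c|$ throughout); this does not affect correctness or the polynomial nature of the final bound $B$.
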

\begin{proof}
We may assume that $d\geq 0$, otherwise replacing $d$ with $-d$ and (say) $a$ with $-a$ leads to an equivalent equation. In fact, we may assume that $d>0$, because the equation is easy for $d=0$. Let us rewrite \eqref{eq:markwithlin} as
\begin{equation}\label{eq:markwithlin2}
(x+a/2)^2+(y+b/2)^2+(z+c/2)^2=dxyz+f,
\end{equation}
where $f=e+(a^2+b^2+c^2)/4$.
Note that for every fixed $x$ equation \eqref{eq:markwithlin} is a $2$-variable quadratic equation, and its solvability can be checked easily. Hence, we can easily check if there exists a solution with $x$ (and similarly with $y$ or $z$) belonging to any bounded range. In particular, we can check whether there is any solution with $|xyz|\leq|f/d|$, and if so, stop, and otherwise assume that $|xyz|>|f/d|$, or $d|xyz|>|f|$. Because $dxyz+f\geq 0$ as a sum of squares, this implies that $xyz\geq 0$.

It suffices to develop an algorithm $A$ that, given $a,b,c,d,e$, determines whether there exist solutions to \eqref{eq:markwithlin} in \emph{non-negative} integers. Indeed, given such an algorithm, we can apply it four times with inputs (i) $a,b,c,d,e$, (ii) $-a,-b,c,d,e$, (iii), $-a,b,-c,d,e$ and (iv) $a,-b,-c,d,e$, and conclude that \eqref{eq:markwithlin} has an integer solution if and only if the algorithm outputs ``Yes'' at least once in these four runs.

We next check for solutions such that $(y+b/2)^2+(z+c/2)^2 \leq f$, or $(x+a/2)^2+(z+c/2)^2 \leq f$, or $(x+a/2)^2+(y+b/2)^2 \leq f$, or $x<5\frac{|a|}{2}$, or $y<5\frac{|b|}{2}$, or $z<5\frac{|c|}{2}$, output ``Yes'' if find a solution, and otherwise assume that the opposite inequalities hold for any solution. In particular, $x\geq 5\frac{|a|}{2}$ implies that
\begin{equation}\label{eq:xbounds}
\frac{6}{5}x \geq x + \frac{a}{2} \geq \frac{4}{5}x.
\end{equation}

Now fix any solution with $x+y+z$ minimal. We may assume that $x\geq y\geq z \geq 0$, because other orderings can be treated similarly. Since we have excluded solutions with $(y+b/2)^2+(z+c/2)^2 \leq f$, equation \eqref{eq:markwithlin2}, considered as a quadratic in $x$, has two positive roots, whose sum is $dyz-a$. If one solution is $x$, another one is $dyz-a-x$. Because we have selected a solution with $x+y+z$ minimal, we must have $x\leq dyz-a-x$, or
$2(x+a/2)\leq dyz$. Together with \eqref{eq:xbounds}, this implies that $2(4x/5)\leq dyz$, or
$$
x \leq \frac{5}{8}dyz.
$$
On the other hand, \eqref{eq:xbounds} implies that $\frac{36}{25}x^2 \geq (x + a/2)^2$. Similarly, $\frac{36}{25}y^2 \geq (y + a/2)^2$, $\frac{36}{25}z^2 \geq (z + a/2)^2$, and, by \eqref{eq:markwithlin2},
\begin{equation}\label{eq:x2y2z2bounds}
\frac{36}{25}(x^2+y^2+z^2)\geq dxyz+f.
\end{equation}
Because $x\geq y\geq z$, this implies that $\frac{36}{25}(3x^2)\geq dxyz+f$, or
$$
x\left(dyz - \frac{108}{25}x\right) \leq -f.
$$
If $dyz - \frac{108}{25}x > 0$, then we must have $-f\geq 0$ and $x\leq -f$. Existence of such solutions can be checked by a direct search, so we may assume that $dyz - \frac{108}{25}x \leq 0$, or
$$
x \geq \frac{25}{108}dyz.
$$ 
Next observe that \eqref{eq:x2y2z2bounds} implies that
$$
x^2+y^2+z^2 - \frac{25}{36}dxyz \geq \frac{25}{36}f,
$$ 
or
$$
\left(x-\frac{25}{72}dyz\right)^2+y^2+z^2-\frac{25^2}{72^2}d^2y^2z^2 \geq \frac{25}{36}f.
$$
Because $\frac{5}{8}dyz \geq x \geq \frac{25}{108}dyz$, we have $\left|x-\frac{25}{72}dyz\right| \leq \max\{\frac{5}{18}dyz,\frac{25}{216}dxyz\} = \frac{5}{18}dyz$, hence
$$
\left(\frac{5}{18}dyz\right)^2+y^2+z^2-\frac{25^2}{72^2}d^2y^2z^2 \geq \frac{25}{36}f,
$$
or
$$
y^2+z^2-\frac{25}{576}d^2y^2z^2  \geq \frac{25}{36}f
$$
Because $y\geq z$, we have $2y^2\geq y^2+z^2$, and we obtain
$$
2y^2 Q(z) \geq \frac{25}{36}f,
$$
where $Q(z)=1-\frac{25d^2}{1152}z^2$. This inequality is possible if either $Q(z) \geq 0$ or $|2y^2 Q(z)|\leq \frac{25}{36}|f|$. Both possibilities can be checked directly in finite time. 
\end{proof}

%\begin{exercise}\label{ex:jumping2}
%Use Algorithm \ref{alg:vieta} (or algorithm from the proof of Proposition \ref{prop:markgen}) to check whether equations \eqref{eq:H26cubic} and
%\begin{equation}\label{eq:h28mark}
%x^2+y^2+z^2 = xyz - 8.
%\end{equation}
%have any integer solutions.
%\end{exercise}

Up to now, we have applied the Vieta jumping technique for solving equations in at most $3$ variables only. We next demonstrate that the same method works for similar equations in more variables as well.

\begin{proposition}\label{prop:Vieta4var}
Equation
\begin{equation}\label{eq:h34vieta}
x^2+y^2+z^2+t^2 = xyzt - 2
\end{equation}
has no integer solutions.
\end{proposition}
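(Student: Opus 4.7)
The plan is to extend the Vieta-jumping template of Proposition \ref{prop:h22} (and Algorithm \ref{alg:vieta}) to four variables. Suppose for contradiction that an integer solution exists. Since $xyzt = x^2+y^2+z^2+t^2+2 > 0$, the product is strictly positive, so by flipping signs of any two of the variables at a time (both sides of \eqref{eq:h34vieta} are preserved) I may assume $x,y,z,t>0$; by the permutation symmetry of the equation I may further assume $x \geq y \geq z \geq t \geq 1$. I then fix such a solution with $x+y+z+t$ minimal.

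Viewed as a quadratic in $x$, equation \eqref{eq:h34vieta} reads $x^2 - (yzt)\,x + (y^2+z^2+t^2+2) = 0$; its other root $x' = yzt - x$ satisfies $xx' = y^2+z^2+t^2+2 > 0$, so $x' > 0$, and minimality forces $x' \geq x$. From $x \leq x'$ I read off not only $yzt \geq 2x$ but, more importantly, the sharper bound $x^2 \leq xx' = y^2+z^2+t^2+2 \leq 3y^2+2$. Substituting back gives $xyzt = x^2+(y^2+z^2+t^2+2) \leq 2(3y^2+2) = 6y^2+4$, and hence $zt \leq (6y^2+4)/(xy) \leq 6+4/y^2$. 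Combined with $zt \geq 2$ (from $yzt \geq 2y$), this rules out $y=1$, so $y\geq 2$ and $zt \leq 7$. Since also $z \leq y$ and $t \leq z$, only the seven pairs $(z,t) \in \{(2,1),(3,1),(4,1),(5,1),(6,1),(2,2),(3,2)\}$ survive (the would-be pair $(7,1)$ would require $y\geq 7$, which tightens the bound to $zt \leq 6+4/49<7$).

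For each of these seven pairs, substituting $(z,t)$ into \eqref{eq:h34vieta} leaves a two-variable quadratic $x^2 - (zt)xy + y^2 = -(z^2+t^2+2)$, and the plan is to dispose of each by an elementary residue/descent argument modulo $3$. When $\gcd(zt,3)=1$ (the pairs $(4,1),(5,1),(2,2)$), the form $x^2-(zt)xy+y^2$ reduces modulo $3$ to $x^2\pm xy+y^2$, whose image lies in $\{0,1\}$, so it cannot hit the required residue $2\pmod 3$. When $3\mid zt$ (the pairs $(3,1),(6,1),(3,2)$), the congruence forces $3\mid x$ and $3\mid y$, after which a single descent step $(x,y)\mapsto(x/3,y/3)$ produces a relation $3\cdot(\text{integer}) = c$ with $c \in \{-4,-13,-5\}$, none divisible by $3$. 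The pair $(2,1)$ collapses immediately to $(x-y)^2=-7$. Alternatively, each of the seven quadratics is covered by Theorem \ref{th:quadratic}.

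The main obstacle is obtaining the finite case list at all: this depends on the \emph{sharper} consequence $x^2 \leq y^2+z^2+t^2+2$ of the Vieta flip (coming from $xx' = y^2+z^2+t^2+2$), not just the weaker $yzt \geq 2x$ used in the three-variable proofs. Without this refinement, Vieta jumping alone leaves $zt$ unbounded and the case analysis does not terminate; with it, the remaining work is routine, and the overall structure is a direct four-variable analogue of the proof of Proposition \ref{prop:h22}.
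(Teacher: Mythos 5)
Your proof is correct, and it shares the paper's overall strategy --- Vieta jumping applied to a minimal positive solution ordered as $x \geq y \geq z \geq t$ --- but it exploits the jump in a genuinely different way. The paper works with the \emph{sum} of the two roots: from $x+x'=yzt$ and minimality it gets $x \leq yzt/2$, pairs this with the lower bound $x \geq yzt/4$ (obtained from $4x^2 \geq xyzt-2$, and valid only after disposing of $t\in\{1,2\}$ by a congruence modulo $9$), completes the square, and reaches $(zt)^2<19$, contradicting $zt\geq 9$. You work with the \emph{product} of the roots: $x^2 \leq xx' = y^2+z^2+t^2+2 \leq 3y^2+2$, which caps $x$ by roughly $\sqrt{3}\,y$ and forces $zt\leq 7$ with no restriction on $t$, after which the proof closes with a finite check of seven two-variable quadratics (each dispatched modulo $3$, or alternatively by Theorem \ref{th:quadratic}). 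I verified the endgame: the forms reduce modulo $3$ to $(x\pm y)^2$ or $x^2\pm xy+y^2$, all with image $\{0,1\}$, while the required residues are $\equiv 2$; in the three cases with $3\mid zt$ the congruence forces $3\mid x$, $3\mid y$, and one division by $9$ yields $3\cdot(\text{integer})\in\{-4,-13,-5\}$, a contradiction. Your route trades the paper's modulo-$9$ computation and completing-the-square manipulation for a short case analysis; the observation that the product relation $xx'=y^2+z^2+t^2+2$ gives the sharper bound is the genuine novelty, and it makes your version arguably the more self-contained of the two.
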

\begin{proof}
This can be proved by solving the optimization problem in Algorithm \ref{alg:vieta} using a computer algebra system, but we will present a human-readable proof in the spirit of the proof of Proposition \ref{prop:markgen}. By contradiction, assume that integer solutions to \eqref{eq:h34vieta} exist, and choose a solution $(x,y,z,t)$ with $|x|+|y|+|z|+|t|$ minimal. We have $xyzt=x^2+y^2+z^2+t^2+2>0$, and, by changing signs, may assume that $x,y,z,t$ are all positive. By symmetry, we may assume that $x\geq y\geq z \geq t$. If $t=1$ or $t=2$, then equation \eqref{eq:h34vieta} has no solutions modulo $9$ hence no integer solutions, so we may assume that $x\geq y\geq z \geq t \geq 3$. Then $4x^2 \geq x^2+y^2+z^2+t^2 = xyzt-2$, hence $4x \geq yzt-\frac{2}{x} \geq yzt-\frac{2}{3}$. Because $4x$ and $yzt$ are integers, this implies that $4x \geq yzt$, or $x \geq \frac{yzt}{4}$. On the other hand, the equation \eqref{eq:h34vieta} has integer solution $(x',y,z,t)$ where $x'=yzt-x$. Because we have selected a solution with $|x|+|y|+|z|+|t|$ minimal, we must have $|x'|\geq x$, or $|yzt-x|\geq x$. Because $yzt>0$, this is possible only if $yzt-x\geq x$, or $x \leq \frac{yzt}{2}$. Thus we have $\frac{yzt}{4} \leq x \leq \frac{yzt}{2}$, or, equivalently, $0 \leq \frac{yzt}{2}-x\leq \frac{yzt}{4}$. This implies that $\left(\frac{yzt}{2}-x\right)^2 \leq \frac{(yzt)^2}{16}$. Hence,
$$
0 = x^2+y^2+z^2+t^2-xyzt+2 = \left(\frac{yzt}{2}-x\right)^2-\frac{(yzt)^2}{4}+y^2+z^2+t^2+2 \leq
$$
$$
\frac{(yzt)^2}{16} -\frac{(yzt)^2}{4} + 3y^2 + 2 = -\frac{3}{16}(yzt)^2 + 3y^2 + 2,
$$
which implies that $\frac{3}{16}(zt)^2 \leq 3+\frac{2}{y^2}\leq 3+\frac{2}{3^2}=\frac{31}{9}$, and $(zt)^2 \leq \frac{31}{9} \cdot \frac{16}{3} < 19$, which is impossible for $z \geq t \geq 3$.
\end{proof}

From now on, we will exclude all equations that 
%(possibly after a linear change of variables) 
can be solved by Algorithm \ref{alg:vieta}.

\subsection{$H \geq 25$: Superelliptic equations}\label{sec:superell}

In the previous sections we have covered all equations of size $H\leq 24$. The only equation of size $H=25$ we have not discussed so far is the equation
%\begin{equation}\label{eq:H26var2}
$$
y^2+y = x^4-3.
$$
%\end{equation} 
%of size $H=25$. 
This equation reduces to the question whether the determinant of $1+4(x^4-3)=4x^4-11$ can be a perfect square. In this particular case we can easily check that this is impossible\footnote{Indeed, if $|x|\leq 1$ then $4x^4-11<0$, while for $|x|\geq 2$ we have $(2x^2)^2 > 4x^4 - 11 > (2x^2-1)^2$, hence $4x^4-11$ lies between consecutive perfect squares and therefore cannot be a perfect square.}, but 
%we do not need to, because all equations of this type are covered by the general theorem of Baker \cite{baker1969bounds}.
in general this type of equation can be quite non-trivial. As an example, consider the equation
$$
2y^2+y-x^4-x-2 = 0,
$$
that reduces to the question whether $1^2-4\cdot 2 \cdot (-x^4-x-2)=8x^4 + 8x + 17$ can be a perfect square. 
%We leave it to the reader as an exercise.
%\begin{exercise}\label{ex:perfect}
%Does there exist an integer $x$ such that $8x^4 + 8x + 17$ is a perfect square?
%\end{exercise}
%\textbf{Equations in two variables quadratic in one of the variables.} 
In 1969, Baker \cite{baker1969bounds} developed a general method for solving equations of the form $y^2=P(x)$.
\begin{theorem}\label{th:ellBaker1}
There is an algorithm for listing all integer solutions (there are finitely many of them) of the equation 
$$
y^2 = P(x) = a_n x^n + \dots + a_1 x + a_0,
$$
provided that all $a_i$ are integers, $a_n\neq 0$, and $P(x)$ has at least three simple (possibly complex) zeros.
%\item[(b)] all integer solutions of the equation 
%\begin{equation}\label{eq:ym}
%y^m = P(x) = a_n x^n + \dots + a_1 x + a_0,
%\end{equation}
%provided that $m\geq 3$, $n\geq 3$, all $a_i$ are integers, $a_n\neq 0$, and $P(x)$ has at least two simple zeros.
%\end{itemize}
%In both cases, there are at most finitely many integer solutions.
\end{theorem}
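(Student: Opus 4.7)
The plan is to reduce the equation $y^2 = P(x)$ to a finite family of $S$-unit equations over a number field, and then invoke Baker's effective theorem on linear forms in logarithms to obtain explicit bounds on $|x|$.

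Let $\alpha_1,\alpha_2,\alpha_3$ be three distinct simple zeros of $P$, and let $K$ be the splitting field of $P$ over $\mathbb{Q}$ with ring of integers $\mathcal{O}_K$. For any integer solution $(x,y)$, consider the ideal factorization of $(x - \alpha_i)$ in $\mathcal{O}_K$ for $i=1,2,3$. The crucial input is that $\alpha_i$ is a \emph{simple} zero: any prime ideal $\mathfrak{p}$ at which $v_\mathfrak{p}(x-\alpha_i)$ is odd must also divide $2a_n\,\mathrm{disc}(P)$, because at all other primes the relation $y^2 = a_n\prod_j(x-\alpha_j)$ forces the contribution of $(x-\alpha_i)$ to be a square. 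Hence $(x-\alpha_i)$ is the product of a square ideal and an ideal supported on a fixed finite set $S$ of primes. Using finiteness of the class group of $K$ and absorbing units into coefficients, one obtains a representation
$$
x - \alpha_i = \beta_i\, \gamma_i^2, \qquad i=1,2,3,
$$
where each $\beta_i$ lies in an effectively computable finite subset of $K^*$, and $\gamma_i \in K^*$.

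Subtracting pairs of these identities eliminates $x$ and yields, for each admissible triple $(\beta_1,\beta_2,\beta_3)$, the system
$$
\beta_1 \gamma_1^2 - \beta_2 \gamma_2^2 = \alpha_2 - \alpha_1,\qquad
\beta_2 \gamma_2^2 - \beta_3 \gamma_3^2 = \alpha_3 - \alpha_2.
$$
Taking the ratio of two such relations converts the problem into an $S$-unit equation $u+v=1$ with $u,v$ in a finitely generated multiplicative subgroup of $K^*$. Baker's theorem on linear forms in logarithms furnishes an effective upper bound on the heights of all solutions of any such unit equation; translating back through $x = \alpha_1 + \beta_1\gamma_1^2$ produces an effective bound $B$ on $|x|$. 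Enumerating integers $x$ with $|x| \le B$ and testing whether $P(x)$ is a perfect square then yields the complete finite list of integer solutions.

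The main obstacle is the descent step: one must confirm that \emph{three} simple zeros (rather than two, which would leave room for a Pell-type equation with infinitely many solutions) suffice to force the ratio of two relations to become a genuine $S$-unit equation in a finitely generated group, and one must make the class-group representatives and $S$-unit generators explicit so that Baker's estimates apply uniformly across the finitely many admissible tuples $(\beta_1,\beta_2,\beta_3)$. The reduction to squarefree $P$ (by extracting the square part of any repeated factor into $y$) and the handling of archimedean places are technical but routine once this central descent is in place.
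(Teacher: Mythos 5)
The paper does not prove this theorem at all: it is quoted as a known result and attributed to Baker's 1969 paper on the hyperelliptic equation, so there is no in-paper argument to compare yours against. Your outline is the standard proof of that cited result, and the overall strategy is correct: pass to the splitting field, use simplicity of the zeros to show that any prime at which $v_{\mathfrak p}(x-\alpha_i)$ is odd divides a fixed quantity built from $a_n$ and the discriminant, absorb class-group and unit contributions to write $x-\alpha_i=\beta_i\gamma_i^2$ with $\beta_i$ in an effectively computable finite set, eliminate $x$, reduce to $S$-unit equations, and apply Baker's effective bounds on linear forms in logarithms.

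The one place where your mechanism is off (and which you yourself flag as ``the main obstacle'') is the passage from the relations $\beta_i\gamma_i^2-\beta_j\gamma_j^2=\alpha_j-\alpha_i$ to a unit equation. Taking the \emph{ratio} of two such quadratic relations does not by itself produce $u+v=1$ with $u,v$ in a finitely generated group: the summands $\beta_i\gamma_i^2$ are not $S$-units. The standard route is to work in the larger field $L=K(\sqrt{\beta_1},\sqrt{\beta_2},\sqrt{\beta_3})$, factor each difference as $(\sqrt{\beta_i}\,\gamma_i-\sqrt{\beta_j}\,\gamma_j)(\sqrt{\beta_i}\,\gamma_i+\sqrt{\beta_j}\,\gamma_j)=\alpha_j-\alpha_i$, deduce that each linear form $\sqrt{\beta_i}\,\gamma_i\pm\sqrt{\beta_j}\,\gamma_j$ is an $S$-unit times an element of bounded height, and then invoke the Siegel identity
$$
(\sqrt{\beta_1}\,\gamma_1-\sqrt{\beta_2}\,\gamma_2)+(\sqrt{\beta_2}\,\gamma_2-\sqrt{\beta_3}\,\gamma_3)=(\sqrt{\beta_1}\,\gamma_1-\sqrt{\beta_3}\,\gamma_3)
$$
to obtain the genuine $S$-unit equation. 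This is exactly where the third simple zero is consumed --- with only two zeros there is no Siegel identity and one is left with a Pell-type equation, as you correctly anticipate. So your proposal identifies the right architecture and correctly locates the crux, but leaves that crux as a stated gap rather than closing it; since the paper itself delegates the entire proof to the literature, this is a reasonable account of the cited argument rather than a flaw relative to anything the paper does.
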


We next observe that Theorem \ref{th:ellBaker1} can be extended to solve all two-variable Diophantine equations that are quadratic in one of the variables (say $y$). 

\begin{proposition}\label{prod:quady}
There is an algorithm that, given arbitrary polynomials $a(x)$, $b(x)$ and $c(x)$ with integer coefficients, determines all integer solutions to the equation  
\begin{equation}\label{eq:quady}
a(x) y^2+b(x)y+c(x)=0.
\end{equation}
\end{proposition}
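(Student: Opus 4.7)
The plan is to complete the square in $y$ to reduce equation \eqref{eq:quady} to a superelliptic-type equation $z^2 = D(x)$, then invoke Theorem \ref{th:ellBaker1} whenever the discriminant $D$ has three simple zeros, and fall back on Theorem \ref{th:quadratic} in the degenerate cases. First I dispose of the trivial case $a \equiv 0$: the equation becomes $b(x)y + c(x) = 0$, which is linear in $y$. Polynomial division gives $c(x) = b(x)Q(x) + R(x)$ with $\deg R < \deg b$, and then $b(x)\mid c(x)$ iff $b(x)\mid R(x)$. For $|x|$ large enough we have $|b(x)|>|R(x)|$, so divisibility forces $R(x)=0$: either $R\equiv 0$ (every integer $x$ gives a solution $y=-Q(x)$) or $R\not\equiv 0$ has finitely many integer roots, each of which is checked directly, together with a bounded range of small $x$. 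The sub-subcase $b\equiv 0$ reduces to solving $c(x)=0$.

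Assume henceforth $a\not\equiv 0$. Multiplying \eqref{eq:quady} by $4a(x)$ and setting $z:=2a(x)y+b(x)$ yields
\begin{equation}\label{eq:zdiscr}
z^2 = b(x)^2 - 4a(x)c(x) =: D(x).
\end{equation}
Every integer solution $(x,y)$ of \eqref{eq:quady} yields an integer pair $(x,z)$ satisfying \eqref{eq:zdiscr}, and conversely such a pair lifts to an integer $y$ iff $2a(x)$ divides $z-b(x)$, which is a trivial check for each candidate. The task therefore reduces to listing all integer pairs $(x,z)$ with $z^2=D(x)$.

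Factor $D$ in $\mathbb{Q}[x]$ as $D(x) = \alpha\, q(x)^2\, r(x)$ with $q,r\in\mathbb{Z}[x]$, $r$ squarefree, and $\alpha\in\mathbb{Q}^{*}$; explicitly, take $r$ to be the product of irreducible factors of $D$ appearing with odd multiplicity, $q$ the rest. If $\deg r \ge 3$ then $D$ has at least three simple complex zeros, and Theorem \ref{th:ellBaker1} directly outputs the (finite) list of integer pairs $(x,z)$ with $z^2=D(x)$. Otherwise $\deg r \le 2$, and for any integer $x$ with $q(x)\ne 0$, equation \eqref{eq:zdiscr} forces $\alpha\, r(x)$ to be a perfect square $u^2$; after clearing denominators this becomes a two-variable polynomial equation of total degree at most $2$, whose integer solutions (finitely many, or a finite union of explicit parametric families) are produced by Theorem \ref{th:quadratic} in its stronger form. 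For each such $(x,u)$, set $z=q(x)u$ and test whether $2a(x)\mid z-b(x)$. The exceptional set $\{x : q(x)=0\}$ is a finite, easily computed list of integer roots of $q$, and each such $x$ is substituted back into \eqref{eq:quady}, which then becomes a one-variable problem solvable by \cite{cucker1999polynomial}.

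The main obstacle lies in the last subcase, when $u^2 = \alpha\, r(x)$ is a Pell-type equation with infinitely many integer solutions. Theorem \ref{th:quadratic} delivers such solutions as a finite union of linear recurrences or polynomial parametrizations, and the congruence condition $2a(x)\mid q(x)u - b(x)$ must then be intersected with each family. Since the condition is periodic modulo a bounded-period polynomial in the parametrization index, this intersection decomposes each family into finitely many sub-families described by linear congruences on the parameter, which can be enumerated effectively. Thus the full solution set of \eqref{eq:quady} is output as a finite union of explicit parametric families.
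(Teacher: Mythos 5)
Your overall route --- split off $a\equiv 0$, complete the square to $z^2=D(x)$, factor out the square part of the discriminant, use Theorem \ref{th:ellBaker1} when the squarefree part has degree at least $3$, and fall back on a two-variable quadratic otherwise --- is essentially the paper's. But your last step has a genuine gap. When $\deg r\le 2$ and $u^2=\alpha r(x)$ is a Pell-type equation with infinitely many solutions, you must decide for which of these $(x,u)$ one has $2a(x)\mid q(x)u-b(x)$. You call this a ``congruence condition'' that is ``periodic in the parametrization index'', but that is only justified when $a(x)$ is constant: if $a$ is non-constant, the modulus $2a(x)$ grows along the family, the condition is not a congruence modulo anything fixed, and no periodicity argument applies. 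Concretely, for $xy^2+y-2x=0$ the discriminant is $8x^2+1$, the Pell equation $u^2-8x^2=1$ has infinitely many solutions, and one must decide for which of them $2x\mid u\mp 1$ --- a divisibility with an unbounded modulus. The paper handles exactly this case by a different device: divide $b$ and the square factor by $2a$ with remainders of degree strictly less than $\deg a$, observe that the resulting ``fractional part'' $\frac{-r_1(x)\pm r_2(x)\sqrt{R(x)}}{2a(x)}$ is bounded because $R$ is at most quadratic, hence it can take only finitely many integer values, each of which confines $x$ to the root set of a one-variable polynomial equation. Some growth argument of this kind is needed; periodicity cannot supply it.

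Two smaller slips. First, in the case $a\equiv 0$, the equivalence ``$b(x)\mid c(x)$ iff $b(x)\mid R(x)$'' and the claim that $R\equiv 0$ gives a solution for every integer $x$ both fail when the quotient $Q$ has non-integer rational coefficients (e.g.\ $2y+x=0$ has $Q=x/2$ and $R\equiv 0$, yet only even $x$ admit integer $y$); the paper's B\'ezout argument, producing $p(x)c(x)+q(x)b(x)=1$ and forcing $b(x)$ to divide a fixed integer $k$, avoids this. Second, ``$\deg r\ge 3$ implies $D$ has at least three simple zeros'' is false when some irreducible factor of $D$ has odd multiplicity at least $3$ (e.g.\ $D=x^3(x-1)^3(x-2)^3$ has $\deg r=3$ and no simple zero at all), so Theorem \ref{th:ellBaker1} must be applied to $u^2=r(x)$ with $r$ squarefree, as the paper does, rather than directly to $z^2=D(x)$.
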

\begin{proof}
First assume that $a(x)\equiv 0$. If $b(x)\equiv 0$, then \eqref{eq:quady} reduces to the equation $c(x)=0$ in one variable, which can be solved easily. If $b(x)\not\equiv 0$, then equation $b(x)=0$ has at most finitely many integer solutions, and we can check which of them solve \eqref{eq:quady}. We next assume that $b(x)\neq 0$. In this case,  \eqref{eq:quady} reduces to the question for which $x$ the ratio $\frac{c(x)}{b(x)}$ is  an integer. We may assume that $c(x)$ and $b(x)$ are coprime polynomials over ${\mathbb Q}$, otherwise their common factors can be cancelled. Then there exist polynomials $p(x)$ and $q(x)$ with rational coefficients such that $p(x)c(x)+q(x)b(x)=1$. Let $k>0$ be an integer such that polynomials $kp(x)$ and $kq(x)$ have integer coefficients. If $-y=\frac{c(x)}{b(x)}$ is an integer, then $b(x)(-kp(x)y+kq(x))=k$, hence $b(x)$ is a divisor of $k$. It is left to solve the equations $b(x)=d$ for each (positive or negative) divisors $d$ of $k$, and check which of the resulting $x$ solve \eqref{eq:quady}.

Now assume that $a(x)\not\equiv 0$. Then the equation $a(x)=0$ has at most finitely many integer solutions, and we can check which of them solve \eqref{eq:quady}. We next assume that $a(x)\neq 0$. In this case, equation \eqref{eq:quady} can be solved in $y$ to get
$$
y_{1,2}=\frac{-b(x)\pm \sqrt{b^2(x)-4a(x)c(x)}}{2a(x)},
$$  
and it remains to determine for which $x$ either $y_1$ or $y_2$ is an integer. Let us write $D(x)=b^2(x)-4a(x)c(x)$ as a product $\prod p_i(x)^{a_i}$ of irreducible polynomials $p_i(x)$. Write each $a_i$ as $a_i=2k_i+r_i$, where each $r_i$ is $0$ or $1$. Then $D(x)=Q^2(x)R(x)$, where $Q(x)=\prod p_i(x)^{k_i}$ and $R(x)=\prod p_i(x)^{r_i}$. Because polynomials $p_i(x)$ are irreducible, they have only simple roots and no common roots, hence polynomial $R(x)$ has no repeated roots. Then
\begin{equation}\label{eq:quady_aux}
y_{1,2}=\frac{-b(x)\pm \sqrt{D(x)}}{2a(x)} = \frac{-b(x)\pm Q(x)\sqrt{R(x)}}{2a(x)}.
\end{equation}  
If the degree of $R(x)$ is at least $3$, then it has at least $3$ simple roots, so there are at most finitely many $x$ such that $R(x)$ is a perfect square, and all such $x$ can be listed by an algorithm in Theorem \ref{th:ellBaker1}. We can then check for which of the listed values of $x$ either $y_1$ or $y_2$ is an integer. 

It remains to consider the case when $R(x)$ is at most quadratic. First, assume that polynomial $a(x)$ is non-constant. Then we can write $b(x)=2q_1(x)a(x)+r_1(x)$ and $Q(x)=2q_2(x)a(x)+r_2(x)$ for some polynomials $q_1(x),r_1(x),q_2(x),r_2(x)$ with rational coefficients such that $r_1(x)$ and $r_2(x)$ have degrees strictly smaller than the degree of $a(x)$. Then \eqref{eq:quady_aux} reduces to
$$
y_{1,2}=-q_1(x)\pm q_2(x)\sqrt{R(x)}+\frac{-r_1(x)\pm r_2(x)\sqrt{R(x)}}{2a(x)}.
$$  
Let $k>0$ be an integer such that polynomials $kq_i(x)$ and $kr_i(x)$ have integer coefficients. Then $k y_1$ and/or $ky_2$ can be an integer only if the ratio $r(x)=\frac{-k r_1(x)\pm k r_2(x)\sqrt{R(x)}}{2k a(x)}$ is an integer. But since the degrees of $r_1(x)$ and $r_2(x)$ are strictly smaller than the degree of $a(x)$, and $R(x)$ is at most quadratic, there exists a constant $M$ such that $|r(x)|\leq M$ for all $x$. Then we can solve the equation $r(x)=m$ for all $m$ in the range $[-M,M]$, and check which of the resulting $x$ solve \eqref{eq:quady}.

The remaining case is when $2a(x)=C$ is a non-zero constant. Then consider $|C|$ cases $x=|C|z,$ $\dots,$ $x=|C|z+(|C|-1)$, where $z$ is a new integer variable. In each case $x=|C|z+r$, $y_1$ and/or $y_2$ in \eqref{eq:quady_aux} is an integer if and only if $t=\frac{-b(r)\pm Q(r)\sqrt{R(|C|z+r)}}{C}$ is an integer. But this is equivalent to the equation
$$
(C t +b(r))^2 = Q^2(r)(R(|C|z+r)),
$$ 
which is a quadratic equation in integer variables $t$ and $z$, and can be solved by an algorithm \cite{Alpern}. 
\end{proof} 

%In the same way, if the equation 
%has any integer solution with $f(x)\neq 0$, then $D(x)=g^2(x)-4f(x)h(x)$ must be a perfect square. If $D(x)$ has at least three simple zeros, there are finitely many such $x$ and they can all be found by Theorem \ref{th:ellBaker1}. We can then check if any of these $x$ leads to a solution of \eqref{eq:quady}. The values of $x$ such that $f(x)=0$ can also be checked directly, unless $f(x)\equiv 0$. 
%We next discuss the following easy but useful trick: if a number lies between consecutive perfect squares it cannot be a perfect square. Consider, for example, $2$-variable equation
%\begin{equation}\label{eq:h27_2var}
%y^2 \pm x^3y -x^2 \pm x + 1 = 0
%\end{equation}
%of size $H=27$. This equation is quadratic in $y$. It has integer solutions only if the determinant
%$$
%D = (\pm x^3)^2 - 4(-x^2 \pm x + 1) = x^6+4x^2\mp 4x -4
%$$
%can be a perfect square for some integer $x$. Now, if $|x| \geq 4$, then
%$$
%(x^3)^2 < D < (x^3+1)^2,
%$$   
%hence $D$ lies between consecutive perfect squares and therefore cannot be a perfect square. The values with $|x|\leq 3$ can be checked directly.
%\begin{exercise}\label{ex:between}
%Use the argument as above to check that equations 
%$$
%y^2 + (x^3+1)y-2x+1=0,
%$$
%$$
%y^2+y-x^4-2x+2=0,
%$$
%and
%$$
%x^2y^2 + 3y - x^2 - 2 =0
%$$
%have no integer solutions.
%\end{exercise}
%In fact, the question whether a one-variable polynomial (with at least three simple zeros) can be a perfect square is covered by Theorem \ref{th:ellBaker1}.

Proposition \ref{prod:quady} allows us to exclude all equations of the form \eqref{eq:quady}. This finishes the analysis of the equations of size $H\leq 25$. 

The only $2$-variable equation of size $H=26$ the program returns is
\begin{equation}\label{eq:H26xy}
y^3 = x^4+2.
\end{equation}
This particular equation is easy: substitution $x^2=Y$, $y=X$ reduces it to the equation $Y^2=X^3-2$ of the form \eqref{eq:ellWei}, for which command
$$
sage: EllipticCurve([0,0,0,0,-2]).integral\_points()
$$
returns that the only integer solutions are $X=3$, $Y=\pm 5$, but then $x=\pm \sqrt{Y}$ is not an integer. However, similar equations with an extra $x$ term 
such as, for example, 
$$
y^3 = x^4+x+3,
$$ 
are less trivial. 
%For example, we leave the following equations to the reader as an  exercise.
%
%\begin{exercise}\label{ex:superell}
%Determine whether equations
%$$
%y^3 = x^4+x+2,
%$$
%$$
%y^3 = x^4+x+3
%$$
%and
%$$
%y^3 = x^4+x-4
%$$
%have any integer solutions.
%\end{exercise}
In the same paper \cite{baker1969bounds} where he proved Theorem \ref{th:ellBaker1}, Baker also proved the existence of an algorithm for listing all integer solutions of the equation
\begin{equation}\label{eq:ym}
y^m = P(x) = a_n x^n + \dots + a_1 x + a_0,
\end{equation}
provided that $m\geq 3$, $n\geq 3$, all $a_i$ are integers, $a_n\neq 0$, and $P(x)$ has at least two simple zeros. In particular, this Theorem covers equation \eqref{eq:H26xy}  because $P(x)=x^4+2$ has $4$ simple zeros. In 1984, Brindza \cite{brindza1984s} showed that the simple zeros condition can be significantly weakened.

\begin{theorem}\label{th:brindza}
Let $m\geq 2$ be an integer, and let $P(x)$ be a polynomial with integer coefficients. Let $\alpha_1, \dots, \alpha_r$ be (possibly complex) roots of $P$ with multiplicities $e_1, \dots, e_r$. Let $m_i = \frac{m}{\text{gcd}(m,e_i)}$, $i=1,\dots,r$, and reorder the roots such that $m_1\geq \dots \geq m_r$. Assume that 
\begin{equation}\label{eq:ym_cond}
(m_1,\dots,m_r) \neq (2,2,1,\dots,1) \quad \text{and} \quad (m_1,\dots,m_r) \neq (t,1,1,\dots,1)
\end{equation}
for any integer $t\geq 1$. Then equation \eqref{eq:ym} has finitely many integer solutions, and there exist an algorithm that lists them all.
\end{theorem}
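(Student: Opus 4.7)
The plan is to reduce equation \eqref{eq:ym} to a finite collection of Thue--Mahler equations and then apply the effective bounds coming from Baker's theory of linear forms in logarithms. First I would pass to a number field $K$ containing all the roots $\alpha_1,\dots,\alpha_r$ of $P(x)$, with ring of integers $\mathcal{O}_K$. Let $S$ be a finite set of places of $K$ containing the archimedean places, the primes dividing $m\cdot a_n$, and the primes appearing in the nonzero differences $\alpha_i-\alpha_j$; by enlarging $S$ further to kill the ideal class group, we may assume $\mathcal{O}_{K,S}$ is a principal ideal domain.

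The second step is the factorization reduction. Writing $y^m = a_n\prod_{i=1}^r(x-\alpha_i)^{e_i}$ and comparing valuations at each prime $\mathfrak{p}\notin S$, one checks that $m_i = m/\gcd(m,e_i)$ must divide $v_{\mathfrak{p}}(x-\alpha_i)$. Combined with principality of $\mathcal{O}_{K,S}$, this gives, for each $i$,
\begin{equation*}
x-\alpha_i \;=\; \beta_i\,\gamma_i^{m_i},
\end{equation*}
where $\beta_i$ ranges over a finite, explicitly computable set of coset representatives of $\mathcal{O}_{K,S}^{*}/(\mathcal{O}_{K,S}^{*})^{m_i}$ and $\gamma_i\in\mathcal{O}_{K,S}$. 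Eliminating $x$ by subtraction then yields, for any pair of distinct indices $i\neq j$,
\begin{equation*}
\beta_i\gamma_i^{m_i}-\beta_j\gamma_j^{m_j} \;=\; \alpha_j-\alpha_i.
\end{equation*}

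Third, the hypothesis \eqref{eq:ym_cond} is designed precisely to guarantee that for some choice of indices the resulting binary form $\beta_iX^{m_i}-\beta_jY^{m_j}$ is a Thue form of positive genus: the excluded multiplicity patterns $(t,1,\dots,1)$ and $(2,2,1,\dots,1)$ are exactly those for which the normalized curve underlying $y^m=P(x)$ has genus zero and may therefore carry infinitely many integer points. Under \eqref{eq:ym_cond}, the resulting Thue--Mahler equation in $\gamma_i,\gamma_j$ admits only finitely many $S$-integer solutions, effectively bounded by Baker's theorem on linear forms in archimedean and $p$-adic logarithms. This yields an explicit bound $B$ on $\max(|x|,|y|)$, and a finite search below $B$ completes the algorithm.

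The main obstacle is the case analysis in the third step: one must verify that \eqref{eq:ym_cond} is exactly the correct condition to rule out every genus-zero possibility, and handle the fact that the field $K$, the set of representatives $\{\beta_i\}$, and the relevant Thue--Mahler equation all vary with the multiplicity pattern. A secondary delicate point is effectivity --- Siegel's theorem alone would yield only qualitative finiteness, so Baker's effective estimates must be used throughout to produce an actual algorithm rather than a mere existence statement.
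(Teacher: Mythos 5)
The paper does not actually prove Theorem~\ref{th:brindza}: it is quoted as a known result of Brindza \cite{brindza1984s} (an effective form of LeVeque's criterion), so there is no internal proof to compare against. Your sketch follows the standard strategy behind that literature --- factor $x-\alpha_i$ in $\mathcal{O}_{K,S}$, extract the $m_i$-th power part, eliminate $x$, and apply Baker's effective bounds --- so the overall architecture is the right one. (One small normalization you skip: the $\alpha_i$ need not be $S$-integral when $a_n\neq\pm1$; one first multiplies through by $a_n^{m-1}$ and replaces $x$ by $a_nx$ to make $P$ monic before the valuation argument.)

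The genuine gap is in your third step, which is precisely where condition \eqref{eq:ym_cond} has to do its work. First, when $m_i\neq m_j$ the expression $\beta_iX^{m_i}-\beta_jY^{m_j}$ is not homogeneous, hence not a Thue form, and the difference equation $\beta_i\gamma_i^{m_i}-\beta_j\gamma_j^{m_j}=\alpha_j-\alpha_i$ is not a Thue--Mahler equation; reducing it to something Baker's method handles requires extra work (typically a further extension in which $\beta_j$ becomes an $m_j$-th power, so that the equation becomes a superelliptic equation over a number field). Second, and more seriously, the pattern $(m_1,\dots,m_r)=(2,2,2,1,\dots,1)$ satisfies \eqref{eq:ym_cond} and must be covered, yet for it \emph{every} pairwise difference equation has the Pellian shape $\beta_i\gamma_i^2-\beta_j\gamma_j^2=c$, which in general has infinitely many solutions in $\mathcal{O}_{K,S}$; no single pair of indices yields finiteness. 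One must use the three relations $x-\alpha_i=\beta_i\gamma_i^2$, $i=1,2,3$, simultaneously, obtaining a system of two quadratic equations sharing a variable (an affine curve of genus one) to which Baker's effective theorem then applies. As written, your claim that ``for some choice of indices the resulting binary form is a Thue form of positive genus'' is false in exactly this admissible case, so the case analysis needs to be split into (i) $m_1\geq 3$, $m_2\geq 2$ and (ii) $m_1=m_2=m_3=2$, with different reductions in each.
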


Condition \eqref{eq:ym_cond} is weaker than Baker's requirement that $P(x)$ has at least two simple zeros. Without this condition, it might be impossible to list all solutions, because there may be infinitely many of them. However, if we only want to decide whether any integer solution exists, Theorem \ref{th:brindza} easily implies the following corollary.

\begin{corollary}\label{cor:brindza}
There is an algorithm that, given any integers $a$ and $m\geq 0$, and any polynomial $P(x)$ with integer coefficients, decides whether equation
\begin{equation}\label{eq:aym}
a y^m = P(x)
\end{equation}
has any integer solution.
\end{corollary}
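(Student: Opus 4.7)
The plan is to reduce the equation $ay^m = P(x)$ to the form $Y^m = \tilde P(x)$ treated by Theorem \ref{th:brindza}, and then handle the two exceptional exponent patterns in \eqref{eq:ym_cond} by hand. The degenerate cases are immediate: $m=0$ gives the one-variable equation $a=P(x)$, $a=0$ gives the one-variable equation $P(x)=0$, and $m=1$ puts the equation $ay=P(x)$ into the family \eqref{eq:lingen} already treated in Section \ref{sec:trivial}. So I assume $m\ge 2$ and $a\ne 0$. Multiplying through by $a^{m-1}$ and setting $Y=ay$ converts the equation into $Y^m = a^{m-1}P(x) =: \tilde P(x)$, and $(x,y)\in\mathbb{Z}^2$ solves the original iff $(x,Y)\in\mathbb{Z}^2$ solves the new equation with $a\mid Y$. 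Since $a^{m-1}$ is a nonzero constant, the roots of $\tilde P$ and their multiplicities coincide with those of $P$, so the tuple $(m_1,\dots,m_r)$ for $\tilde P$ equals that of $P$. If this tuple avoids both patterns in \eqref{eq:ym_cond}, Theorem \ref{th:brindza} yields a finite explicit list of all integer solutions to $Y^m=\tilde P(x)$, and I report ``Yes'' iff some listed pair satisfies $a\mid Y$.

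Both exceptional patterns force $\tilde P$ to have a highly constrained factorisation. For $(2,2,1,\dots,1)$ the integer $m$ must be even, and one obtains over $\mathbb{Q}$ a factorisation $\tilde P(x)=c\,S(x)^{m/2}Q(x)^m$ with $S\in\mathbb{Q}[x]$ of degree at most $2$. The equation then reads $\bigl(Y/Q(x)\bigr)^m = c\,S(x)^{m/2}$; for $Q(x)\ne 0$ this forces $c$ to be an $(m/2)$-th power in $\mathbb{Q}$, which reduces the problem to the Pell-type equation $Z^2 = c_0\,S(x)$ in two integer variables (after clearing the denominator from $Q(x)$ by splitting $x$ into residue classes), decidable by Theorem \ref{th:quadratic}; the finitely many zeros of $Q$ are checked separately, and $a\mid Y$ is a further linear congruence. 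For the pattern $(t,1,\dots,1)$ with $t=1$ every root multiplicity of $\tilde P$ is divisible by $m$, so $\tilde P=c\,Q(x)^m$; the equation is solvable iff either $c$ is an $m$-th power $r^m$ in $\mathbb{Q}$ and some integer $x$ satisfies $a\mid rQ(x)$ (a one-variable congruence), or $Q$ has an integer root.

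The remaining case, $(t,1,\dots,1)$ with $t\ge 2$, is the genuinely delicate one. The unique exceptional root $\alpha_1$ is Galois-invariant (being the only root whose $m_i$ exceeds $1$) and so lies in $\mathbb{Q}$. Factoring $\tilde P(x)=c(x-\alpha_1)^{e_1}Q(x)^m$ and splitting $x$ into residue classes modulo the denominator of $\alpha_1$ reduces the equation to $U^m = K\,w^{e_1}$ in integer variables $U,w$ with a fixed rational $K$. For each prime $p$ the relation $m\,v_p(U) = e_1\,v_p(w) + v_p(K)$ is solvable iff $\gcd(m,e_1)$ divides $v_p(K)$, and if so determines $v_p(w)$ modulo $m/\gcd(m,e_1)=t$. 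This constrains any solution $w$ to lie in one of finitely many classes $w=w_0\,z^t$ with $w_0$ explicitly listable and $z\in\mathbb{Z}$ free; substituting back produces finitely many one-variable problems in $z$, each a polynomial divisibility condition decidable by a direct congruence analysis modulo $a$. The main obstacle is precisely this bookkeeping in the $(t,1,\dots,1)$, $t\ge 2$ case: the $p$-adic valuation analysis together with the residue-class split for rational $\alpha_1$ is technical but elementary, and the deep ingredient throughout is Theorem \ref{th:brindza}, on top of which the corollary is essentially a bookkeeping exercise.
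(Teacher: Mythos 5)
Your argument is correct in substance and shares the paper's skeleton --- invoke Theorem \ref{th:brindza} whenever \eqref{eq:ym_cond} holds, and dispose of the two exceptional multiplicity patterns by exhibiting the forced factorisation of $P$ --- but it diverges in how the coefficient $a$ is absorbed, and that is where the two routes differ in cost. The paper substitutes $x=|a|z+i$ for $i=0,\dots,|a|-1$ and in each class either discards the equation (right-hand side not divisible by $a$) or cancels $a$, so that all subsequent case analysis is done for the clean equation $y^m=P(x)$ with no side condition. You instead multiply by $a^{m-1}$ and set $Y=ay$, which is equally valid but forces you to drag the constraint $a\mid Y$ through every exceptional case: on the finite list from Theorem \ref{th:brindza} this is trivial, but in the $(2,2,1,\dots,1)$ case the condition becomes $a\mid ZQ(x)$ on the solution set of the Pell-type equation $Z^2=c_0S(x)$, and this is not merely ``a further linear congruence'' --- one must split $(x,Z)$ into residue classes modulo $a$ (times the denominators) and apply Theorem \ref{th:quadratic} to each admissible class, or appeal to the eventual periodicity modulo $a$ of the recurrent solution families. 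This is routine but should be said. Two further small points, on which you are exactly as loose as the paper: when $m\equiv 0\ (\mathrm{mod}\ 4)$ the relation $(c_0S(x))^{m/2}=u^m$ only yields $c_0S(x)=\pm u^2$, so two quadratic equations must be tested, not one; and the zeros of $S$ need the same separate treatment you give to the zeros of $Q$. On the credit side, your $p$-adic valuation analysis of the pattern $(t,1,\dots,1)$ with $t\ge 2$ is more careful than the paper's, which writes $P=cQ(x)^mL(x)^{m/t}$ and thereby implicitly takes $e_1=m/t$, whereas in general $e_1$ is only of the form $rm/t$ with $\gcd(r,t)=1$; your congruence $e_1v_p(w)\equiv -v_p(K)\ (\mathrm{mod}\ m)$, pinning $v_p(w)$ modulo $t$, handles the general case correctly. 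In short: same key theorem and same exceptional patterns, a different and somewhat costlier normalisation of $a$, and a tighter treatment of one exceptional case.
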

\begin{proof}
The statement is trivial for $m=0$ and $m=1$, so we may assume that $m\geq 2$. 
Consider $|a|$ cases, $x=|a|z, x=|a|z+1, \dots, x=|a|z+(|a|-1)$, where $z$ is a new integer variable,  and in each case, equation \eqref{eq:aym} is either not solvable because the right-hand side is not divisible by $a$, or, after cancelling $a$, reduces to an equation in the same form \eqref{eq:aym} but with $a=1$. If condition \eqref{eq:ym_cond} holds, then we can list all integer solution by Thereom \ref{th:brindza}. If $(m_1,\dots,m_r)=(2,2,1,\dots,1)$, then $m$ is even and $P(x)$ can be written is the form $P(x)=c Q(x)^m R(x)^{m/2}$, where $c$ is a constant, and $Q(x)$ and $R(x)$ are polynomials with integer coefficients, with $R(x)$ being at most quadratic. Then \eqref{eq:aym} is solvable if and only if there exist $x$ such that $c R(x)^{m/2}$ is a perfect $m$-th power. This is possible only if $c=d^{m/2}$ for some integer $d$, and $dR(x)$ is a perfect square. The last condition is equivalent to the solvability of the quadratic equation $dR(x)=z^2$ in integer variables $x$ and $z$, which can be checked easily \cite{Alpern}. Finally, if $(m_1,\dots,m_r) = (t,1,1,\dots,1)$, then $P(x)=c Q(x)^m L(x)^{m/t}$, where $L(x)=kx+b$ is a linear polynomial, and the question reduces to whether $c (kx+b)^{m/t}$ can be a perfect $m$-th power, which is also easily decidable. 
\end{proof}

Equations in the form \eqref{eq:aym} are known as superelliptic equations. 
Corollary \ref{cor:brindza} allows us to exclude all such equations from further analysis. After this, the program returns no two-variable equations of size $H\leq 30$. 

\subsection{$H \geq 26$: Sum of squares values of a polynomial}\label{sec:sumsquares}

In the previous sections we have solved all equations of size $H \leq 25$ and 
all two-variable equations of size $H\leq 30$. 
The only remaining equations of size $H=26$ are
\begin{equation}\label{eq:H26cubic}
2-yx^2-xyz+y^2+z^2 = 0,
\end{equation}
\begin{equation}\label{eq:H26cubicb}
2+x^2 y+y^2+x^2 z+z^2 = 0
\end{equation} 
and
\begin{equation}\label{eq:h26general}
2+x^3 y+y^2+z^2=0
\end{equation}
%and 
%\begin{equation}\label{eq:h26missed}
%y^2+y(x^2+1)+z^2+x^2+4=0.
%\end{equation}

Equation \eqref{eq:H26cubic} is similar to ones in Table \ref{tab:vieta} except for the presence of an extra $-yx^2$ term. To get rid of this term, let us replace the variable $z$ with $w-x$ for a new variable $w$. After this linear substitution, equation \eqref{eq:H26cubic} reduces to the equation
\begin{equation}\label{eq:H26transformed}
2 + w^2 - 2 w x + x^2 - w x y + y^2 = 0,
\end{equation}
for which Algorithm \ref{alg:vieta} returns $t^*\approx 5.08 < \infty$. A direct search shows that equation \eqref{eq:H26transformed} has no integer solutions with $\min\{|x|,|y|,|w|\}\leq t^*$, hence the equation \eqref{eq:H26cubic} is not solvable in integers either. This method can be easily automated: try various linear substitutions and apply Algorithm \ref{alg:vieta} to the resulting equations.

Equation \eqref{eq:H26cubicb} can also be simplified by a linear substitution. The fact that $x^2$ is multiplied by $y+z$ suggests a substitution $y \to w-z$ for a new variable $w$. Then \eqref{eq:H26cubicb} reduces to the equation
$$
2 + w^2 + w x^2 - 2 w z + 2 z^2 = 0
$$ 
which is then easily solved by Algorithm \ref{alg:quadtriv} by rewriting it in the form
\begin{equation}\label{eq:H26trans}
w (w+x^2-2 z)=-2-2 z^2
\end{equation}
and doing analysis modulo $m=8$.
In the original variables, \eqref{eq:H26trans} can be written as
$$
(x^2+y-z)(y+z)=-2-2 z^2.
$$
It is straightforward to implement a program that combines Algorithm \ref{alg:quadtriv} with linear substitutions. Table \ref{tab:H30} enumerates all equations up to size $H \leq 30$ that are solvable by this method but not by previous ones. 

%\begin{table}
%\begin{center}
%\begin{tabular}{ |c|c|c|c|c| }
% \hline
% $H$ & Equation & Representation \eqref{eq:quadtriv} & $m$ & $r_1,\dots,r_l$  \\ 
% \hline\hline
% $26$ & $2+x^2 y+y^2+x^2 z+z^2=0$ & $(x^2+y-z)(y+z)=-2-2 z^2$ & $8$ & $1,2,4,5$ \\ 
% \hline
% $27$ & $3+x^2+x^2 y+y^2-2 z^2=0$ & $(1+y) (-1+x^2+y)=-4+2 z^2$ & $8$ & $1,2,4,6,7$ \\ 
% \hline
% $27$ & $3-x^2+x^2 y+y^2+2 z^2=0$ & $(-1+y) (1+x^2+y)=-4-2 z^2$ & $8$ & $1,2,3,4,6$ \\ 
% \hline
% $29$ & $5-2 x^2+x^2 y+y^2+z^2=0$ & $(-2+y) (2+x^2+y)=-9-z^2$ & $24$ & (long) \\
% %$1,2,3,5,6,9,10,13,15,17,18,21$ \\ 
% \hline
% $29$ & $5+x^3+y^2+x y z+z^2=0$ & $(2+x)(4-2 x+x^2+y z)=3-(y-z)^2$ & $36$ & (long) \\
% %$1,2,3,6,10,11,13,14,22,23,25,26,30,33,34,35$ \\ 
% \hline
% $30$ & $6+x^2+x^2 y+2 y^2+z^2=0$ & $(1+y)(-2+x^2+2 y)=-8-z^2$ & $32$ & (long) \\ %$1,2,3,4,6,8,9,11,12,17,18,19,22,24,25,27$ \\ 
% \hline
% $30$ & $2-x^2-x y+x^3 y+z^2=0$ & $(-1+x) (1+x) (-1+x y)=-1-z^2$ & $4$ & $1,2$ \\ 
% \hline
% $30$ & $2+x^2+2 y+x^2 y-2 y^2+z^2=0$ & $(4+x^2-2 y) (1+y)=2-z^2$ & $8$ & $1,2,6,7$ \\ 
% \hline
% $30$ & $2+2 x+x^3+y^2+x y^2-z^2=0$ & $(1+x) (3-x+x^2+y^2)=1+z^2$ & $4$ & $1,2$ \\ 
% \hline
% $30$ & $2-2 x+x^3-y^2+x y^2+z^2=0$ & $(-1+x)(-1+x+x^2+y^2)=-1-z^2$ & $4$ & $1,2$ \\ 
% \hline
%\end{tabular}
%\caption{\label{tab:H30} Equations of size $H\leq 30$ solvable by Algorithm \ref{alg:quadtriv} after linear transformation.}
%\end{center} 
%\end{table}

\begin{table}
\begin{center}
\begin{tabular}{ |c|c|c|c|c| }
 \hline
 $H$ & Equation & Representation \eqref{eq:quadtriv} & $m$ \\ 
 \hline\hline
 $26$ & $2+x^2 y+y^2+x^2 z+z^2=0$ & $(x^2+y-z)(y+z)=-2-2 z^2$ & $8$ \\ 
 \hline
 $27$ & $3+x^2+x^2 y+y^2-2 z^2=0$ & $(1+y) (-1+x^2+y)=-4+2 z^2$ & $8$ \\ 
 \hline
 $27$ & $3-x^2+x^2 y+y^2+2 z^2=0$ & $(-1+y) (1+x^2+y)=-4-2 z^2$ & $8$ \\ 
 \hline
 $29$ & $5-2 x^2+x^2 y+y^2+z^2=0$ & $(-2+y) (2+x^2+y)=-9-z^2$ & $24$ \\
 %$1,2,3,5,6,9,10,13,15,17,18,21$ \\ 
 \hline
 $29$ & $5+x^3+y^2+x y z+z^2=0$ & $(2+x)(4-2 x+x^2+y z)=3-(y-z)^2$ & $36$ \\
 %$1,2,3,6,10,11,13,14,22,23,25,26,30,33,34,35$ \\ 
 \hline
 $30$ & $6+x^2+x^2 y+2 y^2+z^2=0$ & $(1+y)(-2+x^2+2 y)=-8-z^2$ & $32$ \\ %$1,2,3,4,6,8,9,11,12,17,18,19,22,24,25,27$ \\ 
 \hline
 $30$ & $2-x^2-x y+x^3 y+z^2=0$ & $(-1+x) (1+x) (-1+x y)=-1-z^2$ & $4$ \\ 
 \hline
 $30$ & $2+x^2+2 y+x^2 y-2 y^2+z^2=0$ & $(4+x^2-2 y) (1+y)=2-z^2$ & $8$ \\ 
 \hline
 $30$ & $2+2 x+x^3+y^2+x y^2-z^2=0$ & $(1+x) (3-x+x^2+y^2)=1+z^2$ & $4$ \\ 
 \hline
 $30$ & $2-2 x+x^3-y^2+x y^2+z^2=0$ & $(-1+x)(-1+x+x^2+y^2)=-1-z^2$ & $4$ \\ 
 \hline
\end{tabular}
\caption{\label{tab:H30} Equations of size $H\leq 30$ solvable by Algorithm \ref{alg:quadtriv} after linear transformation.}
\end{center} 
\end{table}

Without a computer, it may be quite non-trivial to find a transformation in the second column of Table \ref{tab:H30} that works, and then to do case analysis modulo a large $m$. In particular, the equation 
\begin{equation}\label{eq:h29main}
5+x^3+y^2+x y z+z^2=0
\end{equation}
of size $H=29$ as left open in the first version of this paper \cite{grechuk2021diophantinev1}, and was then solved by Majumdar and Sury in \cite{majumdar2021fruit}.

The last remaining equation of size $H=26$ is \eqref{eq:h26general}. The direct application of Corollary \ref{cor:quadform} seems not to work for this equation. It can be rewritten as $y(x^3+y)=-(z^2+2)$, or, after substitution $y \to -y$, as $y(x^3-y)=z^2+2$. If $z$ is odd, Corollary \ref{cor:quadform} implies that all prime factors $p$ of $z^2+2$ are of the form $p=8k+1$ or $p=8k+3$, which quickly leads to a contradiction. 
%Hence, if $y$ and $x^3-y$ are positive, then prime factorizations of $y$ and $x^3-y$ must also consist only of primes of this form, hence $y$ and $x^3-y$ are $1$ or $3$ modulo $8$. But then $x^3=y+(x^3-y)$ must be $2$, $4$ or $6$ modulo 8, which is a contradiction. 
However, it is unclear how to use Corollary \ref{cor:quadform} to get a contradiction if $z$ is even. In this case, $z^2+2 = 2 \prod p_i$, where all $p_i$ are at the form $p_i=8k+1$ or $p_i=8k+3$. However, a simple example $x=3$, $y=9$ demonstrates that the  prime factorization of $y(x^3-y)$ may also have this form. 

Proposition \ref{prop:h26sumsquares} below, proved by Will Sawin, solves %equations \eqref{eq:H26cubicb} and \eqref{eq:h26general} 
this equation 
by first reducing it to an instance of the following problem: given a polynomial $P$ in one variable, does there exist an integer $x$ such that $P(x)$ can be represented as a sum of two squares? This problem can then be solved using the following Corollary from Proposition \ref{prop:quadform}.

\begin{corollary}\label{cor:evenmult}
Let $a,b,c$ be integers such that $\text{gcd}(a,b,c)=1$, let $D=b^2-4ac$, and let $p$ be an odd prime not dividing $D$ such that $\left(\frac{D}{p}\right)=-1$. Then, for any integers $x$ and $y$, $p$ enters the prime factorization of $m=ax^2+bxy+cy^2$ with even multiplicity.
\end{corollary}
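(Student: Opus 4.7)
The plan is a straightforward descent argument on the $p$-adic valuation $v_p(m)$, using Proposition \ref{prop:quadform} as the engine. First I would note that the hypotheses of Proposition \ref{prop:quadform} are all in force: $p$ is odd so $p\nmid 2$, and $p\nmid D$ by assumption, hence $p\nmid 2D$; combined with $\left(\frac{D}{p}\right)=-1\neq 1$, Proposition \ref{prop:quadform} tells us that whenever $p$ divides $ax^2+bxy+cy^2$, then $p$ must be a common divisor of $x$ and $y$.

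Given this, the corollary falls out by induction on $v_p(m)$ (assuming $m\neq 0$, which is implicit in talking about ``the prime factorization of $m$''). The base case $v_p(m)=0$ is trivial since $0$ is even. For the inductive step, suppose $v_p(m)\ge 1$, so $p\mid m$. By the observation above, $p\mid x$ and $p\mid y$; writing $x=px'$ and $y=py'$ gives
$$
m = p^2\bigl(a(x')^2+bx'y'+c(y')^2\bigr) = p^2 m',
$$
so $v_p(m')=v_p(m)-2$. Crucially, $m'$ is a value of the \emph{same} quadratic form (same triple $(a,b,c)$, same $D$, same $\gcd(a,b,c)=1$), so the inductive hypothesis applies to $m'$ and yields that $v_p(m')$ is even. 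Hence $v_p(m)=v_p(m')+2$ is even as well.

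There is no real obstacle: since $v_p(m)$ is a nonnegative integer that strictly decreases at each step, the descent terminates in finitely many steps. The only small point to be careful about is that the triple $(a,b,c)$ is preserved under the substitution $x\to x/p,\ y\to y/p$, so that $\gcd(a,b,c)=1$ continues to hold and Proposition \ref{prop:quadform} can be reapplied at each stage.
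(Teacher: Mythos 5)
Your proof is correct and is essentially the paper's argument: both rest on the observation that Proposition \ref{prop:quadform} forces $p$ to be a common divisor of $x$ and $y$ whenever $p\mid m$, so the full power of $p$ in $m$ comes in pairs. The only difference is presentational — you peel off one factor of $p^2$ at a time by induction on $v_p(m)$, whereas the paper extracts the maximal common power $p^k$ of $x$ and $y$ in a single step and applies the proposition once to conclude $v_p(m)=2k$.
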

\begin{proof}
Let $k$ be the maximal integer that such $p^k$ is a common divisor of $x$ and $y$, and let $u=\frac{x}{p^k}$ and $v=\frac{y}{p^k}$. Then $p$ is not a common divisor of $u$ and $v$, hence, by Proposition \ref{prop:quadform}, $p$ cannot be a divisor of $au^2+buv+cv^2=\frac{m}{p^{2k}}$. Hence, $p$ enters the prime factorization of $m$ with even multiplicity $2k$.
\end{proof} 

\begin{proposition}\label{prop:h26sumsquares}
Equation \eqref{eq:h26general} has no integer solutions.
\end{proposition}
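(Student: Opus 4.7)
The plan is to rewrite equation \eqref{eq:h26general} so that Corollary \ref{cor:evenmult} becomes directly applicable to the quadratic form $A^2 + B^2$. Treating the equation as a quadratic in $y$,
\[ y^2 + x^3 y + (z^2 + 2) = 0, \]
integrality of $y$ forces the discriminant $x^6 - 4z^2 - 8$ to be a nonnegative perfect square, say $w^2$, and rearranging yields
\[ x^6 - 8 = w^2 + (2z)^2. \]
So it suffices to prove that $x^6 - 8$ is never a sum of two squares for integer $x$. The cases $|x| \leq 1$ give $x^6 - 8 < 0$ and are dismissed immediately, so I assume $|x| \geq 2$.

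The classical sum-of-two-squares criterion — a positive integer $N$ is a sum of two squares iff every prime $p \equiv 3 \pmod 4$ divides $N$ to even order — is exactly what Corollary \ref{cor:evenmult} yields applied to the form $A^2 + B^2$: primes $p$ with $\left(\frac{-1}{p}\right) = -1$, namely $p \equiv 3 \pmod 4$, enter $A^2 + B^2$ only in even powers. To produce an obstructing prime, I use the factorization
\[ x^6 - 8 = (x^2 - 2)(x^4 + 2x^2 + 4), \]
together with the Euclidean-algorithm identity $x^4 + 2x^2 + 4 = (x^2 - 2)(x^2 + 4) + 12$, which forces $\gcd(x^2 - 2,\, x^4 + 2x^2 + 4)$ to divide $12$.

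The remainder of the argument is a parity split on $x$ and a short mod-$4$ computation. If $x$ is odd, both factors are odd and $\equiv 3 \pmod 4$ (since $x^2 \equiv 1 \pmod 4$), while the gcd is an odd divisor of $12$ that cannot equal $3$ because $x^2 \in \{0,1\} \pmod 3$ prevents $3 \mid x^2 - 2$; hence the two factors are coprime and each contributes a prime $p \equiv 3 \pmod 4$ of odd multiplicity, which persists in the product. If $x = 2w$, one writes $x^6 - 8 = 8(2w^2 - 1)(4w^4 + 2w^2 + 1)$, checks that the bracketed factors are odd and coprime (same mod-$3$ argument via the Euclidean algorithm), and notes that a mod-$4$ computation shows exactly one of them is $\equiv 3 \pmod 4$ depending on the parity of $w$, again producing an obstructing prime.

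The main point requiring care — and the step where the structure of this particular equation actually matters — is combining the coprimality verification with simultaneous control of both factors modulo $4$. Without coprimality, a prime $p \equiv 3 \pmod 4$ could appear in both factors with odd multiplicities summing to an even total; without the mod-$4$ analysis one would not know that an odd-multiplicity prime exists at all. Both inputs reduce to routine gcd and modular arithmetic, after which Corollary \ref{cor:evenmult} closes the argument.
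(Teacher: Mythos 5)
Your proposal is correct and follows essentially the same route as the paper's proof: the identity $(x^3+2y)^2+(2z)^2=x^6-8$, the factorization $x^6-8=(x^2-2)(x^4+2x^2+4)$, the coprimality argument via the remainder $12$ and the impossibility of $3\mid x^2-2$, and the mod $4$ analysis producing a prime $p\equiv 3\ (\mathrm{mod}\ 4)$ of odd multiplicity. The only cosmetic difference is in the even case, where the paper simply observes that $x^6-8$ is $8$ times an integer congruent to $3$ modulo $4$, while you perform an extra (valid but unnecessary) factorization of $8w^6-1$.
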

\begin{proof}
%Equation \eqref{eq:H26cubicb} can be rewritten as
%$$
%(y-x^2/2)^2 + (z- x^2/2)^2 = x^4/2-2,
%$$
%or
%$$
%(2y-x^2)^2 + (2z - x^2)^2 = 2 x^4 - 8 = 2(x^2-2)(x^2+2).
%$$
%Corollary \ref{cor:evenmult} implies that if a sum of squares has a prime factor $p$ congruent to $3$ modulo $4$, then the multiplicity of $p$ must be even. If $x$ is even, the right-hand side is equal to $8$ times something congruent to $3$ modulo $4$, and cannot be a sum of two squares. Hence, we may assume that $x$ is odd. In this case, numbers $x^2-2$ and $x^2+2$ are odd and relatively prime, because if they had a common odd prime factor $p$, it would also be a divisor of $(x^2+2)-(x^2-2)=4$, a contradiction. 
%%Because $x=\pm 1$ does not lead to a solution, we may also assume that $|x|\geq 3$.
%Because $x^2+2$ is odd, positive, and congruent to $3$ mod $4$, it must have a prime factor $p$ congruent to $3$ modulo $4$ of odd multiplicity. Because $x^2-2$ and $x^2+2$ are relatively prime, $p$ is not a factor of $x^2-2$. Hence, the product $2(x^2-2)(x^2+2)$ has a prime factor congruent to $3$ modulo $4$ of odd multiplicity, and therefore cannot be a sum of two squares.

Equation \eqref{eq:h26general} can be written as 
$$
(x^3/2+y)^2+z^2=(x^3/2)^2-2,
$$
or
$$
(x^3+2y)^2 + (2z)^2 = x^6 - 8 = (x^2-2)(x^4 + 2 x^2 + 4).
$$
If $x$ is even then the right-hand side is $8$ times something congruent to $3$ modulo $4$ and cannot be a sum of two squares. If $x$ is odd then $x^2-2$ and $x^4 + 2 x^2 + 4$ are odd and relatively prime, because if they had a common (odd) prime factor $p$, it would also divide $(x^2-2)(x^2+4)-(x^4 + 2 x^2 + 4)=-12$, hence $p$ must be $3$, but $x^2-2$ is never divisible by $3$. But then $x^4 + 2 x^2 + 4$ is odd, positive, and congruent to $3$ mod $4$, hence it must have a prime factor $p$ congruent to $3$ mod $4$ of odd multiplicity. Then $p$ is a prime factor of odd multiplicity of the product $(x^2-2)(x^4 + 2 x^2 + 4)$, hence this product cannot be a sum of two squares by Corollary \ref{cor:evenmult}.
%, in which case 
%it must divide gcd(x^2-2, x^4+2x^2+4)=12, which means it must be 3, 
%which is a contradiction as x^2-2 is never disible by 3.
\end{proof}

Proposition \ref{prop:h26sumsquares}, which finishes the analysis of all equations of size $H\leq 26$, 
has been proved using the information about prime factors of $a^2+b^2$. In a similar way, Corollary \ref{cor:evenmult} may be used to determine for which polynomials $P$ there exist $x$ such that $P(x)$ is representable in the forms $a^2\pm 2b^2$, $a^2\pm 3b^2$, and so on. To illustrate the technique, we will solve the equation 
\begin{equation}\label{eq:h29Savin}
1+x^3 y+2 y^2+z^2 = 0
\end{equation}
of size $H=29$.

\begin{proposition}\label{prop:h29Savin}
Equation \eqref{eq:h29Savin} has no integer solutions.
\end{proposition}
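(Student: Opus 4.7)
The plan is to mimic the strategy of Proposition \ref{prop:h26sumsquares}: rewrite the equation so that one side is a value of the binary quadratic form $A^2+2B^2$, which has discriminant $-8$, and then apply Corollary \ref{cor:evenmult} to rule out the factorization of the other side. Viewing \eqref{eq:h29Savin} as a quadratic in $y$ and completing the square (equivalently, multiplying by $8$) gives
$$
(4y+x^3)^2 + 2(2z)^2 \;=\; x^6-8 \;=\; (x^2-2)(x^4+2x^2+4).
$$
The key observation will be that the right-hand side is forced to contain an odd-multiplicity prime factor $p$ with $\left(\tfrac{-8}{p}\right)=-1$, i.e.\ $p\equiv 5$ or $7\pmod 8$, which by Corollary \ref{cor:evenmult} is incompatible with the left-hand side.

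First I would dispose of the small cases $x=0$ and $|x|=1$, where $x^6-8<0$ and no solution is possible. For odd $x$ with $|x|\ge 3$, I would check that $x^2-2$ and $x^4+2x^2+4$ are both positive and, modulo $8$ (using $x^2\equiv 1$), both congruent to $7$; and that they are coprime, because any common odd prime factor divides $(x^4+2x^2+4)-(x^2+4)(x^2-2)=12$, so must be $3$, but $x^2-2\equiv 1\pmod 3$. A positive integer $\equiv 7\pmod 8$ cannot be written as a product of primes that are all $\equiv 1,3\pmod 8$ (such a product lies in $\{1,3\}\pmod 8$), so each factor contains some prime $p\equiv 5$ or $7\pmod 8$ appearing to odd multiplicity; coprimality then makes this multiplicity odd in the product, contradicting Corollary \ref{cor:evenmult}.

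For even $x=2m\neq 0$, I would divide the equation by $8$, using $4y+x^3=4(y+2m^3)$ and $x^6-8=8(2m^2-1)(4m^4+2m^2+1)$, to obtain
$$
2(y+2m^3)^2+z^2 \;=\; (2m^2-1)(4m^4+2m^2+1).
$$
Again the two factors on the right are coprime (the same $\gcd$ computation yields a divisor of $3$, and $3\nmid 2m^2-1$), and a direct mod-$8$ check shows that $2m^2-1\equiv 7\pmod 8$ when $m$ is even, while $4m^4+2m^2+1\equiv 4+2+1\equiv 7\pmod 8$ when $m$ is odd. Hence at least one coprime factor is $\equiv 7\pmod 8$ and contributes a prime $\equiv 5,7\pmod 8$ of odd multiplicity to the right-hand side, which is again forbidden by Corollary \ref{cor:evenmult} applied to the form $2u^2+v^2$ (discriminant $-8$).

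The only nontrivial step is guessing the right algebraic manipulation, namely completing the square in $y$ after multiplying by $8$ and then factoring $x^6-8$ as a difference of cubes in $x^2$; once that representation is in hand, the mod-$8$ bookkeeping and the coprimality check are routine, and Corollary \ref{cor:evenmult} does all the number-theoretic work. I expect no obstacle beyond verifying that every remaining branch (small $|x|$, $x$ odd, $x$ even nonzero) produces a bad prime, which the argument above handles uniformly.
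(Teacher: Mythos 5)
Your proof is correct and follows essentially the same route as the paper: multiply by $8$, complete the square to get $(4y+x^3)^2+2(2z)^2=(x^2-2)(x^4+2x^2+4)$, and use Corollary \ref{cor:evenmult} for the discriminant $-8$ form together with the mod $8$ congruences and the coprimality of the two factors. Your treatment of the even case (dividing by $8$ and refactoring $8m^6-1=(2m^2-1)(4m^4+2m^2+1)$) is actually more explicit than the paper's one-line dismissal, and the only blemish is the claim $x^2-2\equiv 1\pmod 3$, which should read that $x^2-2$ is never divisible by $3$ (it can be $\equiv 2\pmod 3$); the conclusion you need is unaffected.
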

\begin{proof}
Let us multiply the equation by $8$ and rewrite as 
$$
(4y+x^3)^2+2(2z)^2=x^6-8=(x^2-2)(x^4+2x^2+4).
$$
Then Corollary \ref{cor:evenmult} implies that if a number of the form $a^2+2b^2$ has a prime factor $p$ congruent to $5$ or $7$ modulo $8$, then the multiplicity of $p$ must be even. If $x$ is even then the right-hand side is $8$ times something congruent to $7$ modulo $8$ and cannot be in the form $a^2+2b^2$. If $x$ is odd then, as established in the proof of Proposition \ref{prop:h26sumsquares}, integers $x^2-2$ and $x^4 + 2 x^2 + 4$ are odd and relatively prime. But then for any $x\neq \pm 1$, $x^2-2$ is odd, positive, and congruent to $7$ mod $8$, hence it must have a prime factor $p$ congruent to $5$ or $7$ mod $8$ of odd multiplicity. Then $p$ is a prime factor of odd multiplicity of the product $(x^2-2)(x^4 + 2 x^2 + 4)$, hence this product cannot be of the form $a^2+2b^2$.
\end{proof}

%\begin{equation}\label{eq:h27a}
%3-x^2+x^2 y+y^2+2 z^2=0
%\end{equation}
%of size $H=27$. This equation can be rewritten as
%$$
%(2y+x^2)^2+2(2z)^2=x^4 + 4x^2 - 12 = (x^2+6)(x^2-2).
%$$
%Hence, the problem is whether there exists $x$ such that $P(x)=x^4 + 4x^2 - 12$ can be represented as $a^2+2b^2$. 
%\begin{proposition}\label{prop:h27a2_2b2}
%Equation \eqref{eq:h27a} has no integer solutions.
%\end{proposition}
%\begin{proof}
%If $x$ is even then equation \eqref{eq:h27a} has no solutions modulo $8$, hence $x$ must be odd. Corollary \ref{cor:evenmult} implies that if a number $N$ is representable as $a^2+2b^2$, then all prime factors $p$ in the form $p=8k+5$ or $p=8k+7$ must appear with even multiplicity in its prime factorization. 
%Because $x$ is odd, $x^2+6$ is equal to $7$ modulo $8$ and therefore must have a prime factor $p$ in the form $p=8k+5$ or $p=8k+7$ with odd multiplicity. Because $x^2+6$ and $x^2-2$ are coprime, $p$ is not a divisor of $x^2-2$ and therefore is a divisor of $N=(x^2+6)(x^2-2)$ with odd multiplicity, hence $N$ cannot be represented as $a^2+2b^2$. 
%\end{proof}

We next represent the method of proofs of Propositions \ref{prop:h26sumsquares} and \ref{prop:h29Savin} as an (informal) algorithm.

\begin{algorithm}\label{alg:quadform2} 
\begin{itemize}
\item Represent the equation in the form
\begin{equation}\label{eq:quadform2}
QT = aR^2+bRS+cS^2,
\end{equation}
where $a,b,c$ are integers, $R$ and $S$ are arbitrary non-constant polynomials, and $Q,T$ are non-constant polynomials in one variable (say $x$). Denote $U(x_1,\dots,x_n)=aR^2+bRS+cS^2$.
\item Find the smallest integer $K$ (if it exists) such that, for any integer $x$, any common divisor of $Q(x)$ and $T(x)$ must be a divisor of $K$.
\item Then return that the equation is ``Solved'' if it is possible to use Corollary \ref{cor:evenmult} to find integers $m\geq 3$ and $0\leq r_1 < \dots < r_l < m$ such that 
\begin{itemize}
\item[(a)] for any integers $x_1,\dots,x_n$, and any positive divisor $d$ of $U(x_1,\dots,x_n)$
\begin{itemize}
\item[(i)] either $d$ and $U(x_1,\dots,x_n)/d$ have a common divisor that is not a divisor of $K$, or 
\item[(ii)] or $d$ must be equal to some $r_j$ modulo $m$
\end{itemize}
\item[(b)] On the other hand, there is no solution $x_1,\dots,x_n$ of \eqref{eq:quadform2} modulo $m$ such that $|Q|$ and $|T|$ are equal to some $r_j$ modulo $m$.
\end{itemize}
\end{itemize}
\end{algorithm}

We remark that every equation in the form
$$
ay^2+byz+cz^2+P(x)y+Q(x)z+R(x)=0,
$$
where $a,b,c$ are integers, is equivalent to
$$
P^2(4ac-b^2)-4aR(4ac-b^2)+(-bP+2aQ)^2 = (4ac-b^2)(2ay+bz+P)^2+((4ac-b^2)z-bP+2aQ)^2.
$$
The left-hand side is a polynomial in one variable $x$, and, it if is reducible, then this is representation \eqref{eq:quadform2}, and we may try to use Algorithm \ref{alg:quadform2}. The smallest example with $b\neq 0$ for which this method works is the equation
$$
3+3 y+x y+x^2 y-y^2+y z+z^2=0
$$
of size $H=33$, which can be rewritten as
\begin{equation}\label{eq:h33Savin}
-4 (4-x+x^2) (6+3 x+x^2)=(-3-x-x^2-5 z)^2-5 (3+x+x^2-2 y+z)^2,
\end{equation}
and then Algorithm \ref{alg:quadform2} finishes the proof that it has no integer solutions.

Table \ref{tab:H33} enumerates the equations up to size $H \leq 33$ that are left open after application of the previous methods but are solvable by Algorithm \ref{alg:quadform2}. For each equation, we present the corresponding representation \eqref{eq:quadform2}.
%, the remaining analysis is straightforward (at least with a computer).  

\begin{table}
\begin{center}
\begin{tabular}{ |c|c|c| }
 \hline
 $H$ & Equation & Representation \eqref{eq:quadform2}  \\ 
 \hline\hline
 $26$ & $2+x^3 y+y^2+z^2=0$ & $(-2+x^2)(4+2 x^2+x^4)=(x^3 + 2y)^2 + 4z^2$ \\ 
 \hline
 $28$ & $4-x^4+y^2+z^2=0$ & $(-2+x^2)(2+x^2)=y^2+z^2$\\ 
 \hline
 $29$ & $1+x^3 y+2 y^2+z^2=0$ & $(-2+x^2)(4+2 x^2+x^4)=(x^3+4 y)^2+8 z^2$ \\ 
 \hline
 $30$ & $2+x^3 y+y^2+2 z^2=0$ & $(-2+x^2)(4+2 x^2+x^4)=(x^3+2 y)^2+8 z^2$ \\
 \hline
 $33$ & $1+x^3 y+2 y^2+2 z^2 =0$ & $(-2+x^2) (4+2 x^2+x^4)=(x^3+4 y)^2+16 z^2$ \\
 \hline
 $33$ & $1+2 y+x^2 y-2 y^2+2 z-2 z^2=0$ & $(4+2x+x^2)(4-2x+x^2)=(4y-x^2-2)^2+(4z-2)^2$ \\
 \hline
 $33$ & $1+4 x-4 y+x^2 y+y^2+z^2=0$ & $(2-4 x+x^2) (6+4 x+x^2)=(-4+x^2+2 y)^2+4 z^2$ \\
 \hline
 $33$ & $1+4 y+x y+x^2 y-y^2+2 z^2=0$ & $(4+x^2) (5+2 x+x^2)=(4+x+x^2-2 y)^2- 8 z^2$ \\ 
 \hline
 $33$ & $1+4 y+x y+x^2 y-y^2-2 z^2=0$ & $(4+x^2) (5+2 x+x^2)=(4+x+x^2-2 y)^2+8 z^2$ \\ 
 \hline
 $33$ & $3+3 y+x y+x^2 y-y^2+y z+z^2=0$ & \eqref{eq:h33Savin} \\ 
 \hline
 $33$ & $5+x+x^3-y+x^2 y-y^2-z^2=0$ & $(3-2 x+x^2)(7+6 x+x^2)=(-1+x^2-2 y)^2+4 z^2$ \\ 
 \hline
 $33$ & $1+2 x+x^3+y+x y-y^2+z+x z-z^2=0$ & $2(1+2 x)(3+x^2)=(1+x-2 y)^2+(-1-x+2 z)^2$ \\ 
 \hline
\end{tabular}
\caption{\label{tab:H33} Equations of size $H\leq 33$ solvable by Algorithm \ref{alg:quadform2}.}
\end{center} 
\end{table}

%\begin{exercise}\label{ex:a2_2b2}
%Use 
%%the method in the proof of Proposition \ref{prop:eqa23b2}
%Algorithm \ref{alg:quadform2} to show that equations
%$$
%y^2-x^2y+2z^2=x^2-3,
%$$
%$$
%y^2-x^2y-2z^2=-x^2-3,
%$$ 
%$$
%y^2+(2-x^2)y-2z^2=-4,
%$$
%$$
%2y^2+(1-x^2)y+2z^2=-2,
%$$
%$$
%y^2-(x^2+4)y+2z^2=-1,
%$$
%$$
%y^2-(x^2+4)y-2z^2=-1
%$$
%and
%$$
%y^2-x^2y+2z^2=-9
%$$
%have no integer solutions.
%\end{exercise}

%The techniques in the proof of Propositions \ref{prop:h26sumsquares}, \ref{prop:h27a2_2b2} and \ref{prob:eqa23b2} are essentially \emph{necessary and sufficient} for the corresponding polynomial $P(x)$ take values in the given forms. 
If an equation reduces to the question whether a polynomial $P(x)$ can take values representable as a sum of squares (or other quadratic form), but 
%Corollary \ref{cor:evenmult} provides no obstructions, 
Algorithm \ref{alg:quadform2} is not applicable,
this is an indication that an equation may be solvable. To illustrate this, consider equation
\begin{equation}\label{eq:h27hassol}
1+x+x^3+x y+y^2+2 z^2=0
\end{equation}
of size $H=27$. 
It is equivalent to $(2y+x)^2+2(2z)^2=-4x^3+x^2-4x-4$, hence we need to know if there exist $x$ such that $P(x)=-4x^3+x^2-4x-4$ is representable as $a^2+2b^2$, with some parity constraints on $a$ and $b$. 
%Unlike the polynomial $x^4+4x^2-12$ in Proposition \ref{prop:h27a2_2b2}, 
Polynomial $P(x)$ is irreducible, and divisibility analysis modulo $8$ does not preclude $P(x)$ from having only prime factors allowed by Corollary \ref{cor:quadform}. Motivated by this, we performed a computer search and found that $x=-30, y=-51, z=107$ is a solution. The program did not discover this automatically, because it searched only for solutions up to $100$. 

%We will not list further equations with solutions of moderate size slightly above $100$. 

Motivated by this, we implemented the following algorithm for looking for larger integer solutions.

\begin{algorithm}\label{alg:largesol} 
Choose some positive parameters $N_1$ and $N_2$. Then, given a polynomial Diophantine equation $P(x_1,\dots,x_n)=0$, do the following:
\begin{itemize}
\item[(i)] Check whether the equation has any integer solution with $\max\{|x_1|,\dots,|x_n|\}\leq N_1$. If any integer solution is found, return the solution and stop.
\item[(ii)] Call a variable $x_i$ ``special'' if substitution a constant instead of $x_i$ makes the equation (at most) quadratic. Then for each special variable $x_i$ and for each constant $c$ with $|c|\leq N_2$ check whether the equation has a solution with $x_i=c$ using known algorithms for quadratic equations \cite{Alpern, grunewald1981solve}. If any solution found, return the solution and stop.
%\item[(iii)] If no solution found, give up. 
\end{itemize} 
\end{algorithm}

\begin{table}
\begin{center}
\begin{tabular}{ |c|c|c| }
 \hline
 $H$ & Equation & Solution  \\ 
 \hline\hline
 $27$ & $1+x+x^3+x y+y^2+2 z^2=0$ & $x=-30$, $y=-51$, $z=107$ \\ 
 \hline
 $28$ & $4+x^3 y+y^2+z^2=0$ & $x=-9$, $y=17$, $z=110$ \\ 
 \hline
  $29$ & $7+x+x^3+y^2 z+z^2=0$ & $x=-224$, $y=-16$, $z=-3483$ \\
 \hline
 $29$ & $7-x+x^3+2 z+y^2 z=0$ & $x=61$, $y=39$, $z=-149$ \\ 
 \hline
% $29$ & $1+x+x^3+x y^2-z+z^3 =0$ & $x=-4280795$, $y=4360815$, $z=5427173$ \\
% \hline
 $30$ & $6+x^3-y^2+x y^2-z^2=0$ & $x=20$, $y=25$, $z=141$ \\ 
 \hline
 $30$ & $4+x^3-x y+y^3+z+z^2=0$ & $x=-64$, $y=-30$, $z=539$ \\ 
 \hline
% $30$ & $2+x^3-2y+xy+y^2+2z^2=0$ & $x=-30$, $y=-50$, $z=107$ \\ 
% \hline - reducible by a substitution
 $30$ & $2+x^2+x^3+y^3-y z^2=0$ & $x=10256$, $y=23866$, $z=24795$ \\
 \hline
 $31$ & $3+y^2+x^2 y z-2 z^2=0$ & $x=5$, $y=163$, $z=2044$ \\ 
 \hline
 $31$ & $3+4 x+2 y+x^2 y+x z^2 =0$ & $x=21$, $y=-114$, $z=49$ \\ 
 \hline
 $31$ & $1+x^2-x^2 y^2+z+2 z^2 =0$ & $x=113$, $y=45$, $z=-3595$ \\ 
 \hline
 $31$ & $1+2 x+x^3+5 z+y^2 z =0$ & $x=-23$, $y=9$, $z=142$ \\ 
 \hline
 $31$ & $3+x^3+y^2-x y^2+x z+z^2 =0$ & $x=5$, $y=61$, $z=119$ \\ 
 \hline
 $31$ & $1+y+x^2 y-y^2+x y z-2 z^2 =0$ & $x=3$, $y=161$, $z=170$ \\ 
 \hline
 $31$ & $1+2 x-x^2-y+x^2 y-y^2+x z^2 =0$ & $x=131$, $y=-251$, $z=183$ \\ 
 \hline
 $31$ & $1-x^2+x^3-y+x y+y^2+y z^2 = 0$ & $x=-39$, $y=-277$, $z=-30$ \\ 
 \hline
 $31$ & $3+x+x^3+y-x y+y^3+z^2 = 0$ & $x=42$, $y=-98$, $z=929$ \\ 
 \hline
 $31$ & $1+x^2+x^3+y+y^3+y z+z^2 = 0$ & $x=-31$, $y=-38$, $z=-271$ \\ 
 \hline
% $31$ & $1+2 x-3 y+x^2 y+y^2+x z^2 = 0$ & $x=181$ $y=-24266$, $z=1067$ \\
% \hline commented out because of H-preserving substitutions
 $32$ & $2 + 5 y + x^3 y - z^2=0$ & $x=9$, $y=113$, $z=288$ \\ 
 \hline
 $32$ & $2+x^4+y+y^2+y z^2=0$ & $x=280$, $y=-77254$, $z=396$ \\ 
 \hline
 $32$ & $2-x+x^3+x y^2+x y z-z^2=0$ & $x=7858$, $y=934$, $z=-66444$ \\ 
 \hline
\end{tabular}
\caption{\label{tab:H32sol} Equations of size $H\leq 32$ with solution found by Algorithm \ref{alg:largesol} with $N_1=10^5$ and $N_2=100$.}
\end{center} 
\end{table}

Table \ref{tab:H32sol} lists equations of size $H\leq 32$ for which we have found a solution using Algorithm \ref{alg:largesol} with parameters $N_1=10^5$ and $N_2=100$. 
%for which all the listed algorithms failed to prove the solution non-existence for the simple reason that a solution  turned out to exist. For $H\leq 29$, we list all equations (except of some $2$-variable ones excluded by the criteria in Section \ref{sec:trivial}) with solution $\max\{|x|,|y|,|z|\}>100$. In the range $30 \leq H \leq 32$, we list only equation with solution $\max\{|x|,|y|,|z|\}>10,000$. 
In fast, all the solutions listed in Table \ref{tab:H32sol} are found by part (i) of Algorithm \ref{alg:largesol}. The smallest equations where we have used part (ii) are the equations 
$$
1 + x^2 + y^2 + x^2 y z + 2 z^2,
$$
$$
3 + y + y^2 + x^2 y z + 2 z^2,
$$
and
$$
1 + y + y^2 - z + x^2 y z + 2 z^2
$$
of size $H=33$,
for which Algorithm \ref{alg:largesol} (ii) returns solutions $(x,y,z)=$
$$
(7,157088322340,-3208559045),
$$
$$
(9,-1441281757325313892736191569076105,58354112142371754255532446172358469)
$$
and
$$
(9,-521215429444971521761892888274049916,21102788171418790035516786293649655539),
$$
respectively.
We did not check whether the solutions we found are the smallest ones. Table \ref{tab:H32sol} finishes the analysis of all equations of size $H \leq 28$.

\subsection{$H \geq 29$: More advanced versions of the above algorithms}\label{sec:adv}

In the previous sections we have solved all equations of size $H\leq 28$, so we next discuss the remaining equations of size $H=29$. We start with the equation
\begin{equation}\label{ex:h29jump}
5+x^2+y^2+x y z-2z^2 = 0.
\end{equation}
This equation looks similar to the ones solvable by the Vieta jumping technique, but Algorithm \ref{alg:vieta} returns $t^*=\infty$ and therefore is not applicable. Instead, we rewrite this equation in the form \eqref{eq:prodform} as
%that can be rewritten in the form \eqref{eq:quadform} as 
\begin{equation}\label{ex:h29repr}
(8+y^2) (-2+z) (2+z)=-12+(2 x+y z)^2 
\end{equation}
and then run Algorithm \ref{alg:quadform}. It indeed works, returning a contradiction modulo $m=144$. But how to find representations like \eqref{ex:h29repr}? The following Algorithm gives a method that works in many cases.

\begin{algorithm}\label{alg:quad2}~
The input is a polynomial Diophantine equation in variables $x_1,\dots,x_n$. For each $1\leq i \leq n$, check whether the equation is quadratic in $x_i$ with a constant coefficient near the quadratic term: that is, whether it can be written in the form
$$
a x_i^2 + P x_i + Q = 0
$$
where $a$ in an integer and $P,Q$ are polynomials in other variables. Choose a parameter $B>0$, say $B=100$. Then, for each constant $C$ with $|C|\leq B$, 
multiply the equation by $4a$, rewrite as
\begin{equation}\label{eq:genquadrepr}
P^2 - 4 a Q + C = (2 a x_i + P)^2 + C,
\end{equation}
and check whether the polynomial $P^2 - 4 a Q + C$ is reducible. If yes, 
this is representation \eqref{eq:prodform}, so we can run Algorithm \ref{alg:quadform}. If it works for any pair $(i,C)$, return that the equation is solved. 
\end{algorithm}

Table \ref{tab:H33quad} lists the equations up to size $H \leq 33$ that we have solved by Algorithm \ref{alg:quad2}. For each equation, we list the representation \eqref{eq:genquadrepr} that works, and the modulus $m$ for which Algorithm \ref{alg:quadform} returns a contradiction. 

\begin{table}
\begin{center}
\begin{tabular}{ |c|c|c|c| }
 \hline
 $H$ & Equation & Representation \eqref{eq:genquadrepr} & $m$ \\ 
 \hline\hline
 $29$ & $5+x^2+y^2+x y z-2 z^2=0$ & $(8+y^2) (-2+z) (2+z)=-12+(2 x+y z)^2$ & $144$ \\ 
 \hline
 $29$ & $1+2 y+x^2 y+2 x z+2 z^2 =0$ & $-(2+x^2) (-1+2 y)=4+(x+2z)^2$ & $4$ \\ 
 \hline
 $29$ & $1+x^2+x^2 y+y^2+z^2-y z^2=0$ & $(-2+(x-z)^2) (-2+(x+z)^2)=8+(x^2+2 y-z^2)^2$ & $16$ \\
 \hline
 $31$ & $1+3 y+x^2 y+2 x z-2 z^2=0$ & $(3+x^2)(1+2 y)=1+(x-2z)^2$ & $4$ \\
 \hline
 $31$ & $1+x^2+3 y+2 x^2 y+z^2 =0$ & $-(3+2 x^2) (1+2 y)=-1+2 z^2$ & $8$ \\
 \hline
 $31$ & $3+x^2+x^2 y+y^2+z^2-y z^2=0$ & $(-2+(x-z)^2) (-2+(x+z)^2)=16+(x^2+2 y-z^2)^2$ & $16$ \\
 \hline
 $32$ & $2+x^2+x y+x^3 y+z+z^2=0$ & $-(1+x^2) (1+x y)=1+z+z^2$ & $3$ \\
 \hline
 $32$ & $4+x^2+y^2+x y z-3 z^2=0$ & $(-2+x)(2+x)(12+z^2)=-32+(2 y-x z)^2$ & $64$ \\
 \hline
 $33$ & $1+2 y^2+x^2 y z+2 z^2=0$ & $(-2+x) (2+x) (4+x^2) z^2=8+(4 y+x^2 z)^2$ & $16$ \\
 \hline
 $33$ & $5+2 y+x^2 y+2 x z-2 z^2=0$ & $(2+x^2)(1+2 y)=-8+(x-2z)^2$ & $8$ \\
 \hline
 $33$ & $1-4 y+x^2 y+2 x z+2 z^2=0$ & $(-2+x)(2+x)(-1+2 y)=2-(x+2 z)^2$ & $8$ \\
 \hline
 $33$ & $1+4 y+x^2 y+2 x z-2 z^2=0$ & $(4+x^2)(1+2 y)=2+(x-2z)^2$ & $8$ \\
 \hline
 $33$ & $5+x^2 y+y^2+2 x z-y z^2=0$ & $(-2+(x-z)^2) (2+(x+z)^2)=16+(x^2+2 y-z^2)^2$ & $32$ \\
 \hline
 $33$ & $1+x^2 y+2 y^2+2 x z-y z^2=0$ & $(-4+(x-z)^2)(4+(x+z)^2)=-8+(x^2+4 y-z^2)^2$ & $16$ \\
 \hline
 $33$ & $1+x y+x^3 y+x^2 z+z^2=0$ & $(1+x^2) (-1+x^2-4 x y)=3+(x^2+2 z)^2$ & $24$ \\
 \hline
 $33$ & $1+x^2+x^2 y-2 y^2+z^2-y z^2=0$ & $(4+(x-z)^2) (4+(x+z)^2)=8+(x^2-4 y-z^2)^2$ & $16$ \\
 \hline
 $33$ & $1+x^2+2 y+x^2 y+2 y^2-y z^2=0$ & $(-2+(x-z)^2) (-2+(x+z)^2)=8+(2+x^2+4 y-z^2)^2$ & $16$ \\
 \hline
\end{tabular}
\caption{\label{tab:H33quad} Equations of size $H\leq 33$ solvable by Algorithm \ref{alg:quad2}.}
\end{center} 
\end{table}

The next example we consider is the equation
\begin{equation}\label{eq:h29a}
1+2 x+x y+x^2 y+2 z+y^2 z = 0.
\end{equation}
This equation is linear in $z$, can be rewritten as 
$$
-z(y^2+2)=1+2 x+x y+x^2 y,
$$
and reduces to the question whether $y^2+2$ can be a divisor of $1+2 x+x y+x^2 y$. The idea is that if $d$ is any common divisor of $y^2+2$ and $1+2 x+x y+x^2 y$, then, modulo $d$, 
$$
0 \equiv (x^2+x)y+2x+1 \equiv (x^2+x)y^2+(2x+1)y \equiv (x^2+x)(-2)+(2x+1)y 
$$
$$
\equiv 4(x^2+x)-2(2x+1)y + (y^2+2) = (2x-y+1)^2+1.
$$
The rest is easy. Because $x^2+x$ is even, it follows from \eqref{eq:h29a} that $y^2+2$ is odd. Because $y^2+2$ is a divisor of a sum of squares, all its prime factors are $1$ modulo $4$, hence $y^2+2 \equiv 1(\text{mod }4)$, a contradiction. 
%The same argument can be used to prove that equation
%$
%y(x^2+2)=2z^2+2xz+1
%$
%has no integer solutions as well.

This argument can be automated by the following more advanced version of Algorithm \ref{alg:quadform}, which analyses \emph{common} divisors of $P_j$ and $Q$ instead of ``just'' divisors of $Q$.

\begin{algorithm}\label{alg:quadform3}~ 
Assume that the equation is presented in the form
\begin{equation}\label{eq:prodform3}
\prod_{j=1}^k P_j = Q,
\end{equation}
where $P_1,\dots,P_k,Q$ are non-constant polynomials in variables $x_1,\dots,x_n$ with integer coefficients. Choose a bound $B>3$. For integers $m=3,4,\dots, B$, do the following: 
\begin{itemize}
\item[(a)] For each $j=1,2,\dots,k$ form a set $R_j=\{r_1, \dots ,r_{l_j}\}$ of all possible residues that positive \emph{common} divisors of $Q$ and $P_j$ can have modulo $m$;
\item[(b)] if there is no solution $x_1,\dots,x_n$ of \eqref{eq:prodform3} modulo $m$ such that all residues of $|P_j(x_1,\dots,x_n)|$ modulo $m$ belong to $R_j$, conclude that the equation has no integer solutions.
\end{itemize}
\end{algorithm}

Now, any equation linear in some variable $x_i$ can be represented in the form $x_i \cdot P = Q$ where $P,Q$ are polynomials in other variables. This is representation \eqref{eq:prodform3}, and we can run Algorithm \ref{alg:quadform3}. The results up to $H \leq 33$ are presented in Table \ref{tab:H33lin}.

\begin{table}
\begin{center}
\begin{tabular}{ |c|c|c|c| }
 \hline
 $H$ & Equation & $m$ \\ 
 \hline\hline
 $29$ & $1+2 x+x y+x^2 y+2 z+y^2 z =0$ & $4$ \\ 
 \hline
 $33$ & $1+4 y+x^2 y+2 x z^2=0$ & $8$ \\ 
 \hline
 $33$ & $1+2 x+6 y+x^2 y-2 z^2=0$ & $4$ \\ 
 \hline
 $33$ & $1+2 y-x^2 y^2+2 z+x^2 z=0$ & $4$ \\ 
 \hline
 $33$ & $5-2 x+x y+x^2 y+2 z+y^2 z=0$ & $4$ \\ 
 \hline
 $33$ & $1+2 x+x y+x^2 y-4 z+y^2 z=0$ & $8$ \\ 
 \hline
 $33$ & $1-2 x+x y+x^2 y+4 z+y^2 z=0$ & $8$ \\ 
 \hline
 $33$ & $1+x-x^2+x y+x^2 y+3 z+y^2 z=0$ & $4$ \\ 
 \hline
% $33$ & $1+4 y+x^2 y+2 x z^2=0$ & $8$ \\ 
% \hline
% $33$ & $1+2 x+6 y+x^2 y-2 z^2=0$ & $4$ \\ 
% \hline
% $33$ & $5+2 y+x^2 y+2 x z-2z^2=0$ & $4$ \\ 
% \hline
% $33$ & $1+2 y-x^2 y^2+2 z+x^2 z=0$ & $4$ \\ 
% \hline
% $33$ & $5-2 x+x y+x^2 y+2 z+y^2 z=0$ & $4$ \\ 
% \hline
% $33$ & $1+2 x+x y+x^2 y-4 z+y^2 z=0$ & $8$ \\ 
% \hline
% $33$ & $1-2 x+x y+x^2 y+4 z+y^2 z=0$ & $8$ \\ 
% \hline
% $33$ & $1+x-x^2+x y+x^2 y+3 z+y^2 z=0$ & $4$ \\ 
% \hline
% $34$ & $2-x^2+2 x y+x^3 y+z^2=0$ & $4$ \\ 
% \hline
% $34$ & $2+x^2+x y+x^3 y+x z-z^2=0$ & $3$ \\ 
% \hline
\end{tabular}
\caption{\label{tab:H33lin} Equations of size $H\leq 33$ solvable by Algorithm \ref{alg:quadform3}.}
\end{center} 
\end{table}

%For some equations it is not obvious how to write them in the form \eqref{eq:quadform} so that we can apply Algorithm \ref{alg:quadform}. As a simple example, consider equation
%\begin{equation}\label{eq:h29b}
%y(x^2-z^2-y)=x^2+z^2+1, 
%\end{equation}
%also of size $H=29$. Here, the right-hand site is a sum of three squares. However, \eqref{eq:h29b} can be rewritten as 
%$$
%(y+1)(x^2-z^2-y+1) =(x^2+z^2+1)+(x^2-z^2-y)+y+1=2(x^2+1),
%$$ 
%and then we may apply Algorithm \ref{alg:quadform}.

The listed algorithms automatically solve all equations up to $H\leq 30$ except for two equations of size $H=29$. The first one is the equation
%As you may guess, large solutions may be quite non-trivial to find. For example, for the equation 
$$
3+x^2+y+x^2 y+y^2-y z^2 = 0
$$
of size $H=29$. It is convenient to do the substitution $y\to -y$ and rewrite this equation in the form
\begin{equation}\label{eq:h29c}
y(x^2-z^2-y+1)=x^2+3.
\end{equation}
Corollary \ref{cor:quadform} states that the prime factors of $x^2+3$ other than $2$ and $3$ must be in the form $p=3k+1$, but the example $x=y=7$, $z=6$ shows that the left-hand side can also have only such prime factors. However, \eqref{eq:h29c} still has no integer solutions, as we prove next.

\begin{proposition}\label{prob:h29c}
Equation \eqref{eq:h29c} has no integer solutions.
\end{proposition}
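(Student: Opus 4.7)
The plan is to view \eqref{eq:h29c} as a quadratic in $y$ and to exploit that its discriminant factors almost perfectly over $\mathbb{Z}$. Writing the equation as $y^2 - (x^2 - z^2 + 1) y + (x^2 + 3) = 0$, the discriminant is $D = (x^2 - z^2 + 1)^2 - 4(x^2 + 3)$, and the identity
$$D + 12 \;=\; (x^2 - z^2 + 1)^2 - 4x^2 \;=\; (x-z-1)(x-z+1)(x+z-1)(x+z+1)$$
forces any integer solution to supply an integer $w$ with $(x-z-1)(x-z+1)(x+z-1)(x+z+1) = w^2 + 12$. This is precisely the kind of representation to which I would apply an Algorithm~\ref{alg:quadform3}-style divisor analysis, though with a supplementary $3$-adic ingredient.

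Set $u = x-z$ and $v = x+z$. First I would dispose of a few degenerate parity cases. If $x$ and $z$ have opposite parities then $u, v$ are both odd, so all four linear factors are even, giving $v_2\!\left((u^2-1)(v^2-1)\right) \geq 6$; on the other hand a case analysis on $w \pmod 4$ shows $v_2(w^2+12) \in \{0, 2, 4\}$, a contradiction. If $u = 0$ or $v = 0$ then $x^2 = z^2$ and the equation collapses to $y(1-y) = x^2 + 3 \geq 3$, which has no integer solution. Hence $u$ and $v$ are both even and nonzero. Setting $U = |u|$ and $V = |v|$, the four integers $U - 1, U + 1, V - 1, V + 1$ are positive odd divisors of $w^2 + 12$ whose product equals $w^2 + 12$.

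The heart of the argument is a residue analysis modulo $24$. Corollary~\ref{cor:quadform} applied to $w^2 + 3 \cdot 2^2$ forces every odd prime $p \neq 3$ dividing $w^2 + 12$ to satisfy $p \equiv 1 \pmod 3$, and inspection of $w^2 + 12 \pmod 9$ yields $v_3(w^2+12) \leq 1$. Consequently every positive odd divisor of $w^2+12$ has residue modulo $24$ in $S := \{1, 3, 7, 9, 13, 15, 19, 21\}$. Examining $S$, the only pairs of residues differing by $2$ are $(1,3)$, $(7,9)$, $(13,15)$, $(19,21)$, so from $\{U - 1, U + 1\} \pmod{24}$ being one of those pairs I deduce $U \equiv 2 \pmod 6$, and likewise $V \equiv 2 \pmod 6$. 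In particular $3 \nmid U$ and $3 \nmid V$, so $3 \mid U^2 - 1$ and $3 \mid V^2 - 1$, giving $v_3\!\left((U^2-1)(V^2-1)\right) \geq 2$ — a contradiction with the equality $(U^2 - 1)(V^2 - 1) = w^2 + 12$ and $v_3(w^2+12) \leq 1$. The main obstacle is spotting the magic additive constant $C = 12$ that turns the shifted discriminant into a product of four linear factors $(x \pm z \pm 1)$; once that factorisation is in hand, the mod-$24$ divisor analysis together with the $v_3$ comparison finish the proof essentially automatically.
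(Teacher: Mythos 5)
Your proof is correct, and it takes a genuinely different route from the paper's. The paper sets $t=x^2-z^2-y+1$ and converts \eqref{eq:h29c} into the symmetric system $ty=x^2+3$, $(t-1)(y-1)=z^2+3$, then runs a three-way parity case analysis on $(y,t)$ using Corollary \ref{cor:quadform} applied to \emph{both} $x^2+3$ and $z^2+3$ (mod $4$, mod $8$, and finally mod $9$). You instead take the discriminant of the equation as a monic quadratic in $y$ and exploit the identity $D+12=(u^2-1)(v^2-1)$ with $u=x-z$, $v=x+z$, reducing everything to the single quantity $w^2+12$: the $2$-adic valuation forces $u,v$ even, Corollary \ref{cor:quadform} forces $3\nmid U$ and $3\nmid V$, and then $9\mid(U^2-1)(V^2-1)=w^2+12$ contradicts $w^2\not\equiv -12\pmod 9$. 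The two arguments are secretly related (your $w$ is the paper's $y-t$, and $w^2+12=(y+t)^2-4ty$), but yours ends in one clean $3$-adic contradiction rather than a case split, which is a genuine simplification. Two small polish points: in the opposite-parity case, "all four linear factors are even" only gives $v_2\geq 4$; you should invoke $8\mid u^2-1$ for odd $u$ (one of $u\pm1$ is divisible by $4$) to justify the claimed $v_2\geq 6$, which is what you actually need against $v_2(w^2+12)\in\{0,2,4\}$. And the modulus $24$ is overkill --- the only content of the set $S$ is that odd divisors of $w^2+12$ are $\not\equiv 2\pmod 3$, from which $U\equiv 2\pmod 3$ follows directly since exactly one of $U-1,U,U+1$ is divisible by $3$.
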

\begin{proof}
Denote $x^2-z^2-y+1$ by a new variable $t$. Then $ty=x^2+3$ by \eqref{eq:h29c}. Now, by definition of $t$, $t=x^2+3-3-z^2-y+1=ty-z^2-y-2$, which can be written as $(t-1)(y-1)=z^2+3$. Hence, equation \eqref{eq:h29c} reduces to a nice system of equations
$$
\begin{cases} 
(t-1)(y-1)=z^2+3 \\
ty=x^2+3
\end{cases}
$$ 
Now we can use Corollary \ref{cor:quadform} to prove that this system has no integer solutions. It is clear that $y$ and $t$ have the same sign, and we may assume that they are positive, otherwise substitution $y'=1-y$, $t'=1-t$ leads to the same system with positive variables. First assume that both $y$ and $t$ are even. Because $x^2+3$ is not divisible by $8$, this is possible only if both $y$ and $t$ are equal to $2$ modulo $4$. But then both $t-1$ and $y-1$ are equal to $1$ modulo $4$, and so is their product $z^2+3$. But then $z^2$ is $2$ modulo $4$, a contradiction. Similarly, if both $y$ and $t$ are odd, then both $y-1$ and $t-1$ are even, which is possible only if they both are equal to $2$ modulo $4$. But then $y$ and $t$ are both $3$ modulo $4$, and their product $x^2+3$ is $1$ modulo $4$, a contradiction.

Finally, assume that $y$ is even and $t$ is odd (the case when $y$ is odd and $t$ is even is similar). Then $x^2+3$ is even, and must therefore be $4$ modulo $8$, which implies that $y=4a$ for some odd integer $a$. For the same reason, $t-1=4b$ for an odd integer $b$, and we have 
$$
\begin{cases} 
4b(4a-1)=z^2+3 \\
(4b+1)4a=x^2+3.
\end{cases}
$$ 
By Corollary \ref{cor:quadform}, all prime factors of odd positive integers $a$, $b$, $4a-1$ and $4b+1$ must be either $3$ or in the form $p=3m+1$. This implies that none of these numbers can be equal to $2$ modulo $3$. But this is possible only if $a$ is $1$ modulo $3$ and $b$ is $0$ modulo $3$. But in this case both $4b$ and $4a-1$ are divisible by $3$, hence their product $z^2+3$ is divisible by $9$, which is a contradiction.  
\end{proof}

The last equation of size $H\leq 30$ that the listed algorithm could not solve is the equation
\begin{equation}\label{eq:h29d}
1+x+x^3+x y^2-z+z^3 =0
\end{equation}
of size $H=29$. This equation turned out to be solvable in integers. We performed a computer search up to $|z|\leq 10^6$, found no solutions, and listed this equation as open in the first version of this paper \cite{grechuk2021diophantinev1}. But then Andrew R. Booker performed a check up to $|z|\leq 10^7$ and found a solution 
\begin{equation}\label{eq:h29dsol}
x=-4280795, \quad y=4360815, \quad z=5427173. 
\end{equation}
This finishes the analysis of the equations of size $H \leq 30$.

\subsection{$H \geq 31$: Entering the open territory}\label{sec:open}

In the previous section we finished the analysis of equations of size $H \leq 30$. For $H=31$, 
%the number of equations that the program could not automatically solve increases. First of all, at this size 
we have the first two-variable equations not covered by Proposition
\ref{prod:quady} 
%Theorem \ref{th:ellBaker1} 
and Corollary \ref{cor:brindza}. They are
%at least cubic in each of the variables returned by the program are
\begin{equation}\label{eq:H31b}
y^3+x^3y-2x-3=0,
\end{equation}
\begin{equation}\label{eq:H31c}
y^3+x^3y+y+2x-1=0,
\end{equation}
\begin{equation}\label{eq:H31d}
xy^3-2y+x^3+x+1=0
\end{equation}
and
\begin{equation}\label{eq:H31a}
y^3+xy-x^4-3=0.
\end{equation}
However, equations \eqref{eq:H31b}-\eqref{eq:H31d} are covered by another deep result, see e.g. \cite{walsh1992quantitative} for a proof.

\begin{theorem}\label{th:Runge}
Let ${\cal F}$ be a family of polynomials
$$
P(x,y) = \sum_{i=0}^m \sum_{j=0}^n a_{ij} x^i y^j
$$
with integer coefficients $a_{ij}$ of degree $m>0$ in $x$ and $n>0$ in $y$ which are irreducible\footnote{That is, $P(x,y)$ is not a product of non-constant polynomials with rational coefficients} over ${\mathbb Q}[x,y]$, and satisfy at least one of the following conditions:
\begin{itemize}
\item[(C1)] either there exists a coefficient $a_{ij}\neq 0$ of $P$ such that $ni+mj>mn$, or 
\item[(C2)] the sum of all monomials $a_{ij}x^iy^j$ of $P$ for which $ni + mj = nm$ can be decomposed into a product of two non-constant relatively prime polynomials in ${\mathbb Z}[x,y]$.
\end{itemize}
Then there is an algorithm that, given any polynomial $P\in {\cal F}$, determines all integer solutions of equation $P=0$.
\end{theorem}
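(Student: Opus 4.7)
My plan is to follow Runge's classical 1887 strategy, using a Puiseux series expansion of $y$ at $x=\infty$ to produce a formal factorization of $P$, and then exploit integrality to bound any integer solution effectively. The whole argument revolves around the Newton polygon of $P$ and how the Runge condition guarantees a nontrivial factorization in a series ring despite irreducibility over $\mathbb{Q}[x,y]$.

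First I would reduce case (C1) to case (C2). If some monomial $a_{ij}x^iy^j$ of $P$ with $a_{ij}\neq 0$ lies strictly above the Newton segment from $(m,0)$ to $(0,n)$, a substitution $y\mapsto u+g(x)$ with $g\in\mathbb{Z}[x]$ of a suitably chosen degree brings the offending monomial down and populates the segment with new terms, so that the resulting leading form becomes reducible over $\mathbb{Z}$ with coprime factors. Integer solutions transform bijectively under this substitution, so from this point on I may assume condition (C2) holds.

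Under (C2), the leading form $F(x,y)=\sum_{ni+mj=mn}a_{ij}x^iy^j$ factors as $F=F_1F_2$ with coprime $F_1,F_2\in\mathbb{Z}[x,y]$. The key algebraic step is to lift this via Hensel's lemma to a factorization
$$
P(x,y)=\tilde P_1(x,y)\cdot \tilde P_2(x,y)
$$
in $R[y]$, where $R$ is a ring of formal Puiseux series in $x^{-1/N}$ for some effectively determined $N\mid mn$. The coefficients of each $\tilde P_i$ are series convergent for all real $x$ with $|x|>X_0$, where $X_0$ is effectively computable from the coefficients of $P$. Since $P$ is irreducible over $\mathbb{Q}[x,y]$, neither $\tilde P_i$ lies in $\mathbb{Q}[x,y]$: their coefficients contain genuine infinite Puiseux tails.

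The main step is then the following: for any integer solution $(x_0,y_0)$ with $|x_0|>X_0$, the identity $\tilde P_1(x_0,y_0)\,\tilde P_2(x_0,y_0)=0$ forces one factor, say $\tilde P_1(x_0,y_0)$, to vanish. Split $\tilde P_1=Q+T$ with $Q\in\mathbb{Q}[x,y]$ the ``polynomial part'' (finitely many terms with nonnegative integer exponents of $x$) and $T$ the remainder, whose terms involve only strictly fractional or negative exponents of $x$. Then $Q(x_0,y_0)=-T(x_0,y_0)$, and the left-hand side is a rational number with denominator dividing an effective integer $D$, whereas $|T(x_0,y_0)|\le C_1|x_0|^{-1/N}(1+|y_0|)^{\deg_y\tilde P_1}$ by the Puiseux convergence bounds. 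Using the equation $P(x_0,y_0)=0$ to control $|y_0|$ polynomially in $|x_0|$, for $|x_0|$ beyond an effective threshold we obtain $|D\,Q(x_0,y_0)|<1$, forcing $Q(x_0,y_0)=0$. But $Q\in\mathbb{Q}[x,y]$ has strictly smaller $y$-degree than $P$, so $\gcd(P,Q)=1$ in $\mathbb{Q}[x,y]$ by irreducibility of $P$, and by B\'ezout they have only finitely many common zeros. Thus all integer solutions with large $|x_0|$ are ruled out, effectively bounding $|x_0|$; for each remaining integer $x_0$ the equation $P(x_0,y)=0$ is univariate in $y$ and solvable by factoring, completing the algorithm. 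The principal obstacle is producing explicit, uniform-in-coefficients bounds for $N$, $X_0$, $D$, and $C_1$; this quantitative bookkeeping is the main content of \cite{walsh1992quantitative}.
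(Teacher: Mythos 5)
The paper does not actually prove this theorem; it only cites \cite{walsh1992quantitative}, so your proposal should be judged against the classical Runge argument that the citation refers to. Your overall strategy (Puiseux expansions at $x=\infty$, a splitting of $P$ over a Puiseux ring, a rational-valued auxiliary polynomial with bounded denominator that must vanish at large integer solutions, then B\'ezout plus irreducibility) is indeed the right skeleton. However, two of your steps have genuine gaps.

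First, the reduction of (C1) to (C2) via $y\mapsto u+g(x)$ does not work in general. Take $P=y^3+x^3y-2x-3$ (the paper's own example for (C1)): its branches at infinity have leading exponents $3/2$ (twice) and $-2$, and these exponents are intrinsic up to the shift by $g$. For your Hensel lifting to apply you need the weighted-leading form (all terms of maximal $ni+mj$) to be the segment form and to factor into coprime pieces; that forces all branches to share the leading exponent $m'/n'$, which no polynomial substitution can arrange here. The standard treatment does not reduce (C1) to (C2): a coefficient strictly above the segment forces the Newton polygon at infinity to have at least two edges of distinct slopes, which already yields at least two classes of Puiseux expansions, and that is all that is needed.

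Second, and more seriously, the decisive inequality $|D\,Q(x_0,y_0)|<1$ does not follow from your bound $|T(x_0,y_0)|\le C_1|x_0|^{-1/N}(1+|y_0|)^{\deg_y\tilde P_1}$. Along a branch of the class $C_1$ one typically has $|y_0|\asymp|x_0|^{\mu}$ with $\mu>0$ (up to $m/n$ or more), so the right-hand side is of order $|x_0|^{-1/N+(\deg_y\tilde P_1-1)\mu}$, which tends to infinity whenever the class has more than one branch with a growing leading exponent. Equivalently, $Q(x_0,y_0)=-T(x_0,y_0)$ is small precisely when the Puiseux expansion of $Q(x,y_i(x))$ has only negative exponents for every $i$ in the class, and the naive polynomial part of $\tilde P_1$ does not have this property in general. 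This is exactly the delicate point of Runge's method; in \cite{walsh1992quantitative} (and in Hilliker--Straus) the auxiliary polynomial $U(x,y)$ with $\deg_y U<n$ is instead constructed by a counting/linear-algebra argument that imposes the vanishing of all nonnegative-exponent coefficients of $U(x,y_i(x))$ for each branch in the chosen class, the existence of a nonzero such $U$ being guaranteed precisely because the class is proper. With $U$ so constructed, your bounded-denominator and B\'ezout endgame is correct.
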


In 1887, Runge \cite{runge1887ueber} proved that for every $P\in {\cal F}$ equation $P=0$ has at most finitely many integer solutions. Theorem \ref{th:Runge} is an effective version of this result. In particular, it implies that Problem \ref{prob:main} is solvable for this class of equations. 

Now, equations \eqref{eq:H31b} and \eqref{eq:H31c} have degrees $m=n=3$ in $x$ and $y$, and coefficient $a_{31}=1\neq 0$. But then $ni+mj=3\cdot 3 + 3 \cdot 1 > 9 = mn$, (C1) holds, and Theorem \ref{th:Runge} is applicable. Equation \eqref{eq:H31d} has $a_{13}\neq 0$, and Theorem \ref{th:Runge} is applicable by a similar argument.  
%The next exercise suggest the reader to sovle these equations directly, without using Theorem \ref{th:Runge}. 
%
%\begin{exercise}\label{ex:runge}
%Solve equations \eqref{eq:H31b}, \eqref{eq:H31c} and \eqref{eq:H31d}.
%\end{exercise}
%After excluding the equations covered by Theorem \ref{th:Runge}, the only $2$-variable equation of size $H\leq 31$ the program returns is the equation \eqref{eq:H31a}. 
In contrast, equation \eqref{eq:H31a} is not covered by Theorem \ref{th:Runge}. 
Indeed, for this equation $m=4$, $n=3$, and the only non-zero coefficients are $a_{03}, a_{11}, a_{40},$ and $a_{00}$. This implies that (C1) does not hold. In (C2), the monomials with $ni + mj = nm$ are $y^3$ and $-x^4$, and their sum $y^3-x^4$ is irreducible\footnote{This can be checked directly or using the following easy sufficient condition of Ehrenfeucht \cite{ehrenfeucht1958kryterium}: if degrees of polynomials $f(x)$ and $g(y)$ are relatively prime, then $f(x)-g(y)$ is irreducible.} and therefore is not a product of relatively prime polynomials. 
%Hence, Theorem \ref{th:Runge} is not applicable to this equation. We leave its solvability as an open questions.
%Hence, \eqref{alg:quadtriv} is the smallest $2$-variable equation that is not covered by 
However, equation \eqref{eq:H31a} is covered by Algorithm \ref{alg:quadtriv}. This finishes the analysis of all $2$-variable equations of size $H\leq 31$.

All the listed methods can be combined into one unified algorithm.

\begin{algorithm}\label{alg:unified}~ 
Choose some positive parameters $N_0,N_1,N_2,A,M,B$ and $S$. Then, given a polynomial Diophantine equation $P(x_1,\dots,x_n)=0$, do the following: 
\begin{itemize}
\item[(i)] Check whether is has a solution with $\max_i |x_i| \leq N_0$. If yes, stop and report that equation is solvable.
\item[(ii)] Check whether the equation belongs to trivial families listed in section \ref{sec:trivial}, that is, whether it is of the form \eqref{eq:lingen}, of the form \eqref{eq:polprod} for $k\geq 2$ and some $|a|\leq A$, has empty or bounded set of real solutions, or is not solvable modulo $m$ for some $2 \leq m \leq M$. If yes, stop.
\item[(iii)] Check whether it belongs to some family for which an algorithm is well-known, that is, whether it is quadratic and covered by Theorem \ref{th:quadratic}, or $2$-variable one covered by Theorem \ref{th:notabsirr}, \ref{th:cubic2var}, \ref{th:ellBaker1}, \ref{th:brindza} or \ref{th:Runge}. If yes, stop.
\item[(iv)] Run Algorithms \ref{alg:quadtriv}, \ref{alg:vieta}, \ref{alg:quadform2}, \ref{alg:largesol} (with parameters $N_1$ and $N_2$), \ref{alg:quad2} (with parameter $B$) and \ref{alg:quadform3} for this equation. If any of these algorithms works, stop and report the answer.
\item[(v)] For each integer $s \neq 0$ with $|s|\leq S$, transform the equation by linear substitutions (a) $x_i \to x_i + s$ for each variable $i$, and (b) $x_i \to x_i + s x_j$ for each pair of variables $i,j$. Do one substitution at a time. Then repeat part (iv) for each of the resulting equations.
\item[(vi)] Give up and report that the program cannot determine whether the equation is solvable in integers.
\end{itemize}
\end{algorithm}

Algorithm \ref{alg:unified} with parameters $N_1=10^5$, $M=150$, $N_0=N_2=A=B=100$ and $S=2$ automatically solves all equations of size $H \leq 31$, except for equation \eqref{eq:h29d} which has a larger solution \eqref{eq:h29dsol}, equation \eqref{eq:h29c} that we have solved in Proposition \ref{prob:h29c}, and three equations of size $H=31$ that we will discuss next. We start from the equation
\begin{equation}\label{eq:h31e}
3-x^2+x^2 y+y^2+y z-y z^2 = 0.
\end{equation}
One may note that this equation is equivalent to $y(x^2+y+z-z^2)=x^2-3$ and use Corollary \ref{cor:quadform} to analyse the prime factors of $x^2-3$. However, it turns out that 
%$y$ and $x^2+y+z-z^2$ may have only the prime factors allowed and 
this analysis does not lead to a contradiction. Instead, one may note that the cubic terms of this equation are $yx^2-yz^2=y(x-z)(x+z)$, which with new variables $x-z=u$ and $x+z=v$ reduces to $yuv$. All the other terms are at most quadratic, which is an indication that the Vieta jumping technique may work. And it indeed works, but requires some effort.

\begin{proposition}\label{prob:h31a} 
Equation \eqref{eq:h31e} has no integer solutions.
\end{proposition}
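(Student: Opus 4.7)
The plan is to exploit the hint just preceding the statement: substitute $u=x-z$, $v=x+z$ (so $u\equiv v\pmod{2}$, with $x=(u+v)/2$ and $z=(v-u)/2$), set $Y=2y$, and multiply \eqref{eq:h31e} by $4$. The result is the Markoff-type equation
$$
Y^{2}+Y(2uv+v-u)-(u+v)^{2}+12 = 0, \qquad (\star)
$$
in which each of $Y,u,v$ appears squared with leading coefficient $\pm 1$. Consequently all three are ``special'' in the sense of Algorithm \ref{alg:vieta}, and $(\star)$ admits three Vieta involutions
$$
Y' = -Y - 2uv - v + u, \qquad u' = 2Yv - 2v - Y - u, \qquad v' = 2Yu - 2u + Y - v.
$$
In the original coordinates only $y$ and $z$ are special (the coefficient of $x^{2}$ in \eqref{eq:h31e} is $y-1$, not $\pm 1$), and the $z$-jump is merely $z\mapsto 1-z$, an involution with no descent; this is exactly why Algorithm \ref{alg:vieta} returned $t^{*}=\infty$ in the original variables but now has a real chance to succeed.

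Among all integer solutions of $(\star)$ satisfying the parity constraints, I would pick one minimising $|Y|+|u|+|v|$. The three minimality inequalities $|Y'|\ge |Y|$, $|u'|\ge |u|$, $|v'|\ge |v|$, combined with the product relations $YY'=12-(u+v)^{2}$, $uu'=v^{2}-Yv-Y^{2}-12$ and $vv'=u^{2}+Yu-Y^{2}-12$ (read off $(\star)$ as a quadratic in each variable), should pin $(Y,u,v)$ to a bounded region of $\mathbb{R}^{3}$. A finite check over the lattice points in that region --- for each of which $(\star)$ collapses to a one-variable quadratic that is trivially dealt with, or that falls to an obstruction modulo a small integer --- will then yield the contradiction.

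The hard part will be the asymmetric correction $Y(v-u)$ together with the constant $12$: these spoil the clean Markoff identity $Y^{2}=(u+v)^{2}-2Yuv$, so the three minimality inequalities do not collapse automatically, and the ``bounded region'' is inflated by constants that must be tracked carefully --- in the same spirit as the explicit fractions $\tfrac{5}{8}$ and $\tfrac{25}{108}$ arising in the proof of Proposition \ref{prop:markgen}. I anticipate a short auxiliary split (for instance, singling out the regime $|u+v|\le 3$, where the constant $12$ dominates the quadratic part, and the regime where $Y$ has the same sign as $2uv+v-u$) to handle the corners where the Vieta product can fail to force descent. None of these steps, however, requires any idea beyond the Vieta-jumping technology of Section \ref{sec:trivial}--Section \ref{sec:adv}; the essential content is the substitution that unlocks the third special variable.
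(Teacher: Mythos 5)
Your substitution is exactly the one the paper uses: setting $u,v$ equal to $x\pm z$ turns the cubic part $y(x^2-z^2)$ into a single term $\pm yuv$, after which every variable is quadratic with unit leading coefficient and Vieta jumping becomes available in all three directions. Your equation $(\star)$, the three involutions, and the product relations $YY'=12-(u+v)^2$, $uu'=v^2-Yv-Y^2-12$, $vv'=u^2+Yu-Y^2-12$ are all computed correctly, and the parity constraints ($Y$ even, $u\equiv v \pmod{2}$) are in fact preserved by all three jumps, though you must verify this explicitly, since the minimality argument is vacuous unless the jumped solution remains admissible. (The paper normalises differently, via parity shifts, to $54-u^2-v^2+5y-uvy+y^2=0$ with $u,v$ odd and $y$ even, but that difference is cosmetic.)

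The genuine gap is that the descent is never executed. You assert that the three minimality inequalities ``should pin $(Y,u,v)$ to a bounded region,'' and you yourself identify the terms $Y(v-u)$ and $+12$ as the obstacle to this happening automatically --- but that boundedness claim is the entire content of the proposition, and it is not a formal consequence of the inequalities. For instance, $|Y'|\ge|Y|$ together with $YY'=12-(u+v)^2<0$ only yields $Y^2\le (u+v)^2-12$, which bounds nothing on its own. In the paper's version the argument needs a real case split on the sign of the $y$-type variable: for $y>0$ one uses the product $yy'=54-u^2-v^2<0$ (after excluding small $u,v$ by a finite search) to force $y'<0$, whereupon $|y'|\ge y$ collapses to $uv\le 5$ and a finite check finishes; for $y<0$ one jumps in $u$ instead, deduces $u'>0$ from $u+u'=-vy>0$, and obtains $v^2-y^2-5y-54=uu'\ge u^2\ge v^2$, i.e.\ $y^2+5y+54\le 0$, which is impossible. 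Neither branch is the ``short auxiliary split'' you anticipate in outline. Until this case analysis (or an equivalent one for $(\star)$) is actually carried out, your proposal identifies the correct key idea --- the same one the paper uses --- but does not constitute a proof of the statement.
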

\begin{proof}
After substitution $z+x\to u$, $z-x \to v$ and multiplication by $4$, the equation reduces to $4y^2-u^2-v^2+2uv+2yv+2yu-4yuv+12=0$. Then we do substitution $u \to u/2, v \to v/2, y \to y/2$ and multiply by $4$ to get a new equation $48 - u^2 + 2 u v - v^2 + 2 u y + 2 v y - 2 u v y + 4 y^2 = 0$, and we need to prove that it has no solutions in even integers. Next we do the substitution $u \to u + 1, v \to v + 1, y \to y + 1$ to get the equation $54 - u^2 - v^2 + 10 y - 2 u v y + 4 y^2=0$ that should have no solutions in odd integers. Finally, we do the substitution $y \to y/2$ to get the equation 
$$
54 - u^2 - v^2 + 5 y - u v y + y^2 = 0,
$$ 
and we will prove that this equation has no solutions such that $u$ and $v$ are odd while $y$ is even. 
%This will imply that the original one has no solutions in integers. 
Let us call such solutions ``eligible''. Let us choose an eligible solution with the smallest $|u|+|v|+|y|$. From symmetry, we may assume $u\geq v > 0$. First consider the case $y>0$. Then equation, as a quadratic in $y$, has another integer solution $y'=uv-5-y$. Note that if $u$ and $v$ are odd and $y$ even, then $y'$ is even, hence the new solution is eligible. Thus we must have $|y'|\geq y$. Note also that $yy' = 54-u^2-v^2$. The case of small $u$ and $v$ can be checked by an easy computer search, so we may assume that $54-u^2-v^2<0$. Then $y'<0$, and the inequality $|y'|\geq |y|$ reduces to $-y'\geq y$, or $-(uv-5-y)\geq y$, or $uv\leq 5$. However, a direct search returns no eligible solution with $u\geq v > 0$ but $uv\leq 5$, a contradiction. Now consider the case $y<0$. Then the equation as quadratic in $u$ has another integer solution $u'=-vy-u$. If $y$ is even and $v$ and $u$ are odd, then $u'$ is odd, hence the new solution is eligible. Hence we must have  
$|u'|\geq u$. Because $u'+u=-vy>0$, this implies that $u'>0$. Then $u'u\geq u^2$, or $v^2-y^2-5y-54 \geq u^2 \geq v^2$, or $-y^2-5y-54 \geq 0$, a contradiction.
\end{proof}

The next equation left open by Algorithm \ref{alg:unified} is the equation
\begin{equation}\label{eq:h31f}
3+x^2+x^2 y+2 y^2-y z^2 = 0
\end{equation}
which can be solved by a similar method.

\begin{proposition}\label{prob:h31f} 
Equation \eqref{eq:h31f} has no integer solutions.
\end{proposition}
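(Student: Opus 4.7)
The proof proceeds along the lines of Proposition~\ref{prob:h31a}. The cubic part $x^2 y - yz^2 = y(x-z)(x+z)$ factors after the substitution $u = z+x$, $v = z-x$, which together with multiplication by $4$ converts \eqref{eq:h31f} into
\[
u^2 - 2uv(1+2y) + v^2 + 8y^2 + 12 = 0, \qquad u\equiv v\pmod 2,
\]
where $x=(u-v)/2$ and $z=(u+v)/2$.

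The preliminary step rules out $x$ and $z$ having the same parity. If both are even, the original equation modulo $2$ reads $3\equiv 0$, impossible. If both are odd, view \eqref{eq:h31f} as the quadratic $2y^2 + (x^2-z^2)y + (x^2+3) = 0$ in $y$: since $x^2\equiv z^2\equiv 1\pmod 8$, we get $x^2-z^2\equiv 0\pmod 8$ and $x^2+3\equiv 4\pmod 8$, so dividing by $2$ and reducing modulo $4$ forces $y^2\equiv 2\pmod 4$, impossible. Hence $x$ and $z$ have opposite parities, making $u$ and $v$ both odd. A further analysis modulo $8$ of \eqref{eq:h31f} then forces $y\equiv 5\pmod 8$ when $x$ is even and $z$ odd, and $y\equiv 4\pmod 8$ when $x$ is odd and $z$ even.

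The crux is Vieta jumping on the transformed equation. As a quadratic in $u$, its companion root $u' = 2v(1+2y)-u$ is again an odd integer, and $uu' = v^2+8y^2+12 > 0$ forces $u$ and $u'$ to share a sign; an analogous jump $v' = 2u(1+2y) - v$ is available in $v$. Choosing $(u,v,y)$ with $|u|+|v|$ minimal, and using the symmetry $(u,v,y)\mapsto(-u,-v,y)$ to assume $u>0$, the minimality conditions $u'\ge u$ and $v'\ge v$ yield $u\le v(1+2y)$ and $v\le u(1+2y)$. Combined with the sign constraint extracted from the equation itself, this restricts attention to the sub-case $u,v>0$ with $y\ge 4$, together with its mirror $u,v<0$ with $y\le -3$.

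The main obstacle will be closing the descent at the minimal solution. Rewriting the Vieta bound as
\[
(v(1+2y) - u)^2 = 4v^2 y(y+1) - 8y^2 - 12,
\]
the task reduces to showing this quantity is never a nonnegative perfect square for admissible $y$. I expect to accomplish this by modular obstructions: for instance, when $y\equiv 0\pmod 5$ the right-hand side is a nonresidue modulo $5$, and for other residues of $y$ modulo $5$ one can typically force $5\mid v$ or $5\mid v(1+2y)-u$ and iterate; similar considerations modulo small primes adapted to the residues $y\equiv 4,5\pmod 8$ dispose of the remaining cases (together with direct checks of the smallest admissible values $y=4,5,12,13,\dots$). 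Assembling these piecewise arguments into a uniform contradiction, rather than handling each residue class of $y$ separately, is the principal technical hurdle.
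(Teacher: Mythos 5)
Your setup closely parallels the paper's own argument: the substitution $u=z+x$, $v=z-x$ (the paper additionally sets $t=2y$) produces the same quadratic form, and the Vieta jump on the companion root is the same device the paper uses. However, the proof is not complete, and the gap is exactly at the decisive step. Everything up to ``the crux'' is sound, but the identity $\bigl(v(1+2y)-u\bigr)^2=4v^2y(y+1)-8y^2-12$ is just the discriminant of the quadratic in $u$ and holds for \emph{every} integer solution; showing it is never a perfect square ``by modular obstructions'' would amount to a congruence obstruction for the original equation (restricted to residue classes of $y$ that are themselves cut out by congruences mod $8$). But equation \eqref{eq:h31f} survived the local-solvability filter of Algorithm \ref{alg:unified} --- it has solutions modulo every $m\le 150$, in particular modulo $40$ --- so no contradiction can be extracted from arithmetic modulo $5$ and $8$ alone. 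Your own worked example already shows the iteration stalling: for $y\equiv 4\pmod 5$ the right-hand side is $\equiv 0\pmod 5$, and whether $25$ divides it depends on $v$ and on $y$ modulo $25$, so the divisibility cascade does not terminate. The residual plan of ``direct checks of the smallest admissible values $y=4,5,12,13,\dots$'' is an infinite list with no bound, so it cannot close the argument either.

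What is missing is an Archimedean use of minimality. Your one-sided bounds $u\le v(1+2y)$ and $v\le u(1+2y)$ are too weak: they do not confine $y$ relative to $u$ and $v$. The paper instead jumps in the variable $t=2y$ (not in $u$ or $v$): minimality gives the upper bound $t\le uv/2$ from $t'=uv-t\ge t$, while the case assumption $2t^2\ge u^2$ combined with the equation itself gives the lower bound $t\ge uv/6$. Pinching $t$ in the interval $[uv/6,\,uv/2]$ and substituting into the completed square $(uv-2t)^2-(uv)^2+2(u\pm v)^2+24=0$ yields $\tfrac{5}{9}(uv)^2\le 2(u\pm v)^2+24$, which bounds $u$ and $v$ outright and reduces the problem to a finite check (the complementary case $u^2\ge 2t^2$ is handled symmetrically as a quadratic in $u$). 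To repair your proof you would need an analogous two-sided trap for $y$ in terms of $uv$, obtained by jumping in $y$ rather than in $u$ and $v$.
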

\begin{proof}
Substitutions $y=t/2$, $x=(v-u)/2$ and $z=(v+u)/2$ and multiplication by $4$ reduce the equation to $12 + (u-v)^2 - 2 u v t + 2t^2 = 0$. Let us prove that, more generally, that each of two equations 
\begin{equation}\label{eq:pair}
12+(u\pm v)^2-2uvt+2t^2=0
\end{equation} 
has no integer solutions. By contradiction, assume that solutions exists, and let us choose one with $|u|+|v|+|t|$ minimal. By changing signs and by symmetry we may assume that $u\geq v \geq 0$, and then clearly $t\geq 0$. First assume that $2t^2 \geq u^2$. Then $2uvt = 2t^2 + (u\pm v)^2 +12 \leq 2t^2 + 4u^2 +12 \leq 10t^2 + 12 \leq 12t^2$ if $t>2$ (cases $t=0,1,2$ can be checked separately). Thus $uv/6 \leq t$. On the other hand, the equation, considered as quadratic in $t$, has another solution $t'=uv-t=\frac{12+(u\pm v)^2}{2t}>0$. Because $|u|+|v|+|t|$ was minimal, we have $t'\geq t$, or $uv-t \geq t$, or $uv \geq 2t$. Now multiply the equations \eqref{eq:pair} by $2$ and rewrite as $(uv-2t)^2-(uv)^2+2(u\pm v)^2+24=0$. We have $uv-2t\geq 0$ but $uv-2t \leq uv-2(uv/6)=2(uv)/3$. Hence $(2uv/3)^2-(uv)^2+2(u\pm v)^2+24\geq 0$, but this is possible only for small values of $u,v$ that can be checked directly. This finishes the case $2t^2 \geq u^2$. Note that we did this by considering the equation as quadratic in $t$. In exactly the same way, the case $u^2 \geq 2t^2$ can be treated by considering the equation as quadratic in $u$. %, see also Proposition 2.18 in the first arXiv version. 
\end{proof}

The last equation of size $H=31$ left open by Algorithm \ref{alg:unified} is the equation that the program outputs as
$$
3+x^3 y+y^2+z^3 = 0.
$$
With substitution $y \to -y$, we can rewrite this equation in an even nicer form
\begin{equation}\label{eq:h31main}
y(x^3-y)=z^3+3.
\end{equation}
This equation is cubic in both $x$ and $z$, and it is not clear how to apply the methods discussed above (like the information about prime factors of quadratic forms or the Vieta jumping technique) to prove that it has no integer solutions. 
On the other hand, we have performed a computer search and found no solutions to \eqref{eq:h31main} with $|z|\leq 10^8$.
We leave the solvability of this equation to the reader as an open question. 

\begin{question}\label{qu:smallest}
Do there exist integers $x,y,z$ satisfying \eqref{eq:h31main}? 
\end{question}

Equation \eqref{eq:h31main} can be rewritten as
$$
(y-x^3/2)^2-x^6/4+z^3+3=0.
$$
We next do some heuristic analysis of how many solutions to expect to a more general equation
\begin{equation}\label{eq:powersum}
a_1 x_1^{d_1} + a_2 x_2^{d_2}+\dots + a_n x_n^{d_n}=C,
\end{equation}
provided that there are no obstructions (such as divisibility, quadratic residues, Vieta jumping, or any other obstructions) to the solution’s existence. Let us choose some large constant $B$ and look at the solutions such that $B/2 < \max_i |a_i x_i^{d_i}| \leq B$. There are $O(B^{1/d_i})$ choices of each variable $x_i$ such that $|a_i x_i^{d_i}|\leq B$, resulting in about $K B^{S}$ combinations of variables, where 
\begin{equation}\label{eq:Sdef}
S=\sum_{i=1}^n\frac{1}{d_i},
\end{equation}
and $K>0$ is a constant depending on the equation. From this, we need to subtract about $K (B/2)^{S}$ combinations with $\max_i |a_i x_i^{d_i}| \leq B/2$, but the result is still $K_1 B^{S}$ for a different constant $K_1$. Now, there are $O(B)$ possible values of the left-hand side of \eqref{eq:powersum}, hence each value, including $C$, occurs on average $\approx K_2 B^{S-1}$ times. 
%This is how many solutions we except in the range $B/2 < \max_i |a_i x_i^{d_i}| \leq B$. 
%Summing up solutions in intervals $[B/2,B]$,$[B/4,B/2]$, $[B/8,B/4]$, and so on, we conclude that there should be about 
Thus, we have the following cases.
\begin{itemize}
\item If $S>1$, $K_2 B^{S-1}$ grows reasonably fast with $B$. Hence, if a solution cannot be easily found by a computer search, we expect that there should be a special reason (obstruction) to a solution’s existence, and that we can in principle discover this obstruction and use it to prove that there are no solutions.
\item If $S=1$, we expect a constant number of solutions in the range $B/2 < \max_i |a_i x_i^{d_i}| \leq B$. By summing up solutions in the intervals $[B/2,B]$,$[B/4,B/2]$, $[B/8,B/4]$, and so on, we conclude that there should be about $K_2 \log B$ solutions with $ \max_i |a_i x_i^{d_i}| \leq B$. If the constant $K_2$ is small, we expect that equation \eqref{eq:powersum} should have infinitely many solutions, but the smallest one may be astronomical. So, for this type of equations, it makes sense to do a deeper computer search aiming to find large solutions.
\item If $S<1$, then $K_2 B^{S-1}$ decreases fast with $B$. Summing up over intervals $[B,2B]$, $[2B,4B]$, and so on, we conclude that the total number of solutions with $\max_i |a_i x_i^{d_i}| > B$ is expected to be $K_2 B^{S-1} \sum_{i=1}^\infty 2^{i(S-1)} = K_3 B^{S-1}$ for some $K_3>0$. Hence, if the equation happen to have no small solutions, it may well have no solutions at all, even if there are no obstructions to the solution’s existence. The absence of obstructions may make the proof of the solution’s non-existence especially challenging.  
\end{itemize}

\begin{table}
\begin{center}
\begin{tabular}{ |c|c|c| } 
 \hline
 $H$ & Equation & Status \\ 
 \hline\hline
 $29$ & $1+x+x^3+x y^2-z+z^3 =0 $ &  Has an integer solution, see  \eqref{eq:h29dsol} \\
  & $3+x^2+y+x^2 y+y^2-y z^2 = 0$ &  No integer solutions, see Proposition  \ref{prob:h29c} \\ 
\hline
 $31$ & $3-x^2+x^2 y+y^2+y z-y z^2 = 0$ & No integer solutions, see Proposition \ref{prob:h31a} \\ 
  & $3+x^2+x^2 y+2 y^2-y z^2 = 0$ & No integer solutions, see Proposition \ref{prob:h31f} \\ 
  & $3+x^3 y+y^2+z^3 = 0$ & Open \\ 
\hline
 $32$ & $4+x^4+x y+y^3=0$ &  Open \\ 
  & $4+x+x^4+y+y^3=0$ & Open \\ 
  & $2+2 x-x^4-y+y^3=0$ & Open \\
  & $2+x+x^4+x y+y^3=0$ &  Open \\  
\hline
 $33$ & $1+x^4+x y-y^2+y^3 = 0$ & - \\ 
  & $3+x+x^4+x y+y^3 = 0$ & -  \\ 
  & $3+x-x^4+x y+y^3 = 0$ & -  \\
  & $3-x-x^4+x y+y^3 = 0$ & -  \\
  & $1-x+x^4+y+x y+y^3 = 0$ & -  \\
  & $1+2 x^2+x^2 y+2 y^2-y z^2 = 0$ & No integer solutions \\
  & $1+2 y+2 y^2+x^2 y z+z^2 = 0$ & No integer solutions \\
  & $3+x^3+x^2 y^2+z+z^2=0$ & $x=-21495$, $y=-146$, $z=287583$ \\ 
  & $3-x+x^3+x^2 y^2+z^2=0$ & $x=-13147$, $y=114$, $z=161553$ \\ 
  & $3+x^4-z+y^2 z+z^2 = 0$ & $x=708313$, $y=1099536$, $z=-267298981516$ \\
  & $1+x y+x^2 y+4 z+y^2 z-z^2 = 0$ & - \\
  & $1+x^2+2 y+x^2 y+y^2 z+y z^2 = 0$ & No integer solutions  \\
  & $3-y+x^2 y+y^2+x y z-2 z^2 = 0$ & -  \\
  & $1-x+x^3+x^2 y^2+z+z^2 = 0$ & -  \\
  & $1+2 x+x^3+x y+x y^2-2 z^2 = 0$ & -  \\
  & $1+x^2+x^3+y^2+x y^2-x z^2 = 0$ & No integer solutions  \\
  & $1+x-x^2+y^2-x^2 y^2+z+z^2 = 0$ & -  \\ 
  & $1-2 x-x^2+x^3-4 z+y^2 z  = 0$ & No integer solutions \\ 
  & $1-x+x^2+x^3+3 z+y z+y^2 z = 0$ & No integer solutions \\ 
  & $1+x-x^2+x^3+3 z+y z+y^2 z = 0$ & No integer solutions \\  
 \hline
\end{tabular}
\caption{\label{tab:H33unsolved} All equations of size $H\leq 33$ not solved by Algorithm \ref{alg:unified} with parameters $N_1=10^5$, $M=150$, $N_0=N_2=A=B=100$ and $S=2$.}
\end{center} 
\end{table}

For equation \eqref{eq:h31main}, we have $S=\frac{1}{2}+\frac{1}{3}+\frac{1}{6}=1$, hence there are two reasonable ways how one may try to solve this equation: either find a reason (obstruction) why an integer solution cannot exist, or use computer search to try to find a large solution.

One may also ask what are the next smallest open equations after \eqref{eq:h31main}? Table \ref{tab:H33unsolved} lists all equations up to $H\leq 33$ that Algorithm \ref{alg:unified} cannot solve automatically. As you can see, for $H=32$ all equations in $n\geq 3$ variables are solved automatically, but there are four open $2$-variable equations that we will discuss in the next section. We prefer to stop our analysis of the general equations at $H\leq 32$, and leave the equations of size $H=33$ for the interested reader to investigate. For the equations we know how to solve, the answer is given but the proof is not. Equations marked as ``$-$'' has not been even seriously attempted to solve. 
%The difference between ``open'' and ``-'' is that ``open'' equations were 
%, but have not tried to solve them hard enough to 
%For $H=33$, we just list the equations not solved automatically without any further analysis. 
%We suspect that some of these equations have large integer solutions, some others can be solved by trivial modifications of the listed methods, but some equations may be quite difficult. However, Instead, we next discuss  equations of some special types.

\section{Equations of special types}\label{sec:special}

%\subsection{Classification by degree: cubic equations}
%
%In this section we will discuss some restricted classes of Diophantine equations. One natural ``measure of simplicity'' of an polynomial Diophantine equation $P=0$ is the degree of polynomial $P$. The solvability problem is easy for linear Diophantine equations, while the quadratic ones are covered by Theorem \ref{th:quadratic}. Hence, the first open case are cubic equations. In sections above, we have solved all equations of size $H \leq 27$ and all cubic equations of size $H=28$, so we next look at equations of size $H=29$.
%
%Because $2$-variable cubic equations are excluded, it remains to consider only $3$-variable cubic equations.
%
%Some of cubic equations of size $H=29$ can be solved by a straightforward application of the technique we have discussed in Section \ref{sec:general}.
%
%%\begin{exercise}\label{ex:h29cubiceasy}
%%Use the methods from Section \ref{sec:general} to solve the following cubic equations of size $H=29$ 
%%$$
%%y(x^2-4)=3z^2-1,
%%$$
%%%$$
%%%2y^2 -x^3y+z^2+1=0,
%%%$$
%%\end{exercise}
%
%%However, program also returns some other equations for which different methods may be required. For example, the following nice equation, symmetric in $y$ and $z$, somewhat resembles the equation \eqref{eq:h28last} we left open in Section \ref{sec:general}.
%%\begin{equation}\label{eq:h29cubica}
%%y^2-xyz+z^2 = x^3-5.
%%\end{equation}
%%
%%\begin{question}
%%Do there exist integers $x,y,z$ satisfying \eqref{eq:h29cubica}? 
%%\end{question}

%\subsection{Equations in two variables}\label{eq:2var}

\subsection{Classification by the number of variables}\label{eq:2var}

We may classify equations by the number of variables. Equations in one  variable are easy to solve, so the first interesting case are equations in two variables. 
%Any equation in $2$ variables which is at most quadratic in one of the variables reduces to the question whether its determinant is a perfect square, and is covered by Theorem \ref{th:ellBaker1}, provided that the determinant has at least three simple zeros. 
%All equations of size $H\leq 31$ are discussed in the previous sections. For $H=32$, the program returns the following two-variable equations:
The smallest two-variable equations Algorithm \ref{alg:unified} cannot solve are the equations
\begin{equation}\label{eq:H32a}
y^3+xy+x^4+4=0,
\end{equation}
\begin{equation}\label{eq:H32b}
y^3+xy+x^4+x+2=0,
\end{equation}
\begin{equation}\label{eq:H32c}
y^3+y=x^4+x+4
\end{equation}
and
\begin{equation}\label{eq:H32d}
y^3-y=x^4-2x-2
\end{equation}
of size $H=32$.

Equation \eqref{eq:H32a} looks similar to \eqref{eq:H31a}, but it is not clear how to solve it using Corollary \ref{cor:quadform}. If both $x$ and $y$ are even, substitution $x=2u$, $y=2v$ reduces the equation to $-v(2v^2+u)=(2u^2)^2+1$. The right hand side is a sum of squares and by Corollary \ref{cor:quadform} has only the prime factors in the form $p=4k+1$. However, an example $v=-1$, $u=3$ demonstrates that $v(2v^2+u)$ may also have only such prime factors. 

We leave the solvability of these equations to the reader as an open question. 

\begin{question}\label{qu:2var}
Determine whether each of the equations \eqref{eq:H32a}-\eqref{eq:H32d} have any integer solution.
\end{question}

%We remark that Algorithm \ref{alg:unified} has been able to automatically solve all equations of size $H=32$ in $n\geq 3$ variables, so equations \eqref{eq:h31main} and \eqref{eq:H32a}-\eqref{eq:H32d} are the only remaining open equations of size $H\leq 32$.

Let us do some heuristic analysis for the equations \eqref{eq:H32a}-\eqref{eq:H32d}. They have monomials $x^4$, $y^3$, and some monomials of lower degree, which we will ignore. Then $S=\frac{1}{4}+\frac{1}{3}<1$, where $S$ in defined in \eqref{eq:Sdef}. 
%If we ignore the lower order terms, we get 
%
 %Let us do some heuristic analysis for any equation
%\begin{equation}\label{eq:quartic2var}
%P(x,y)=0
%\end{equation} 
%of this type. Assume that we are looking for solutions to \eqref{eq:quartic2var} such that $B/2<\max(|y|^3,|x|^4)\leq B$ for some constant $B$. There are $O(B^{1/3})\cdot O(B^{1/4})=O(B^{7/12})$ such pairs $(x,y)$. For each pair, $|P(x,y)|\leq O(B)$, hence on average we expect about $O(B^{-5/12})$ solutions to \eqref{eq:quartic2var} in this range. This number decays fast with $B$, hence if \eqref{eq:quartic2var} happens to have no small solutions, 
This means that if such equations have no small solutions, they are likely to have no solutions at all, even if there are no clear obstructions for the solution’s existence. This is what makes such equations difficult.
%A possible absence of obstructions makes proving the non-existence of solutions difficult. 
%\begin{question}
%Do there exist integers $x$ and $y$ satisfying \eqref{eq:H31a}? 
%\end{question}
%\begin{question}
%Do there exist integers $x$ and $y$ satisfying \eqref{eq:H31b}?
%\end{question}
%
%\begin{question}
%Do there exist integers $x$ and $y$ satisfying \eqref{eq:H31c}?
%\end{question} 

On the other hand, these equations are in $2$ variables, and correspond to the study of integer points on curves. Compared to the study of integer points on surfaces, this question is much better understood. 
A natural parameter of measuring the complexity of a $2$-variable Diophantine equation
\begin{equation}\label{eq:2vargen}
P(x,y)=0
\end{equation} 
is genus. By Theorem \ref{th:notabsirr}, we may assume that $P(x,y)$ is absolutely irreducible. Then the set of all complex solutions to \eqref{eq:2vargen} form a (connected) surface. Recall that the \emph{genus} $g$ of a connected surface is the maximum number of cuttings that can be made along non-intersecting closed simple curves on the surface without making it disconnected. The genus–degree formula states that
\begin{equation}\label{eq:gendeg}
g \leq \frac{1}{2}(d-1)(d-2),
\end{equation} 
where $d$ is the degree of $P$. If $g=0$, then there are known (and practical) algorithms \cite{poulakis1993points, poulakis2002solving} for determining whether \eqref{eq:2vargen} has a finite or infinite number of integer solutions, and, in the former case, list them all. A classical 1929 Theorem of Siegel \cite{siegel1929uber} states that if $P(x,y)$ has genus $g\geq 1$, then \eqref{eq:2vargen} has a finite number of integer solutions. Hence, we have the following general result.

\begin{theorem}\label{th:finiteness2var}
There is an algorithm that, given a polynomial $P(x,y)$ with integer coefficients, decides whether the equation \eqref{eq:2vargen} 
%$P(x,y)=0$
%\begin{equation}\label{eq:2vargen}
%P(x,y)=0
%\end{equation} 
has a finite or infinite number of integer solutions.
\end{theorem}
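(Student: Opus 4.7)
The plan is to reduce to the case of a single absolutely irreducible factor and then combine the effective results for genus zero curves already cited in the excerpt with Siegel's theorem for higher genus. First I would factor $P(x,y)$ over $\mathbb{Q}$ into irreducible factors $P_1,\dots,P_k$, using standard polynomial factorization; since the integer zero set of $P$ is the union of the zero sets of the $P_i$, the equation $P=0$ has infinitely many integer solutions if and only if at least one component $P_i(x,y)=0$ does. Hence it suffices to decide finiteness for a single polynomial that is irreducible over $\mathbb{Q}$.

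For such a factor $P_i$, I would algorithmically test absolute irreducibility (a decidable question; see standard computer algebra routines). If $P_i$ is not absolutely irreducible, Theorem \ref{th:notabsirr} already provides an algorithm that lists all integer (in fact rational) solutions, so in particular we can output ``finite''. Otherwise $P_i$ is absolutely irreducible, and I would algorithmically compute the geometric genus $g$ of the plane curve $P_i(x,y)=0$: start from the arithmetic genus $\tfrac{1}{2}(d-1)(d-2)$ supplied by \eqref{eq:gendeg}, locate the singular locus as the common zeros of $P_i$ and its partial derivatives, and subtract the delta invariants of each singular point obtained by classical desingularization (Puiseux expansions / repeated blow-ups). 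Two cases then arise: if $g=0$, the algorithms of Poulakis \cite{poulakis1993points, poulakis2002solving} already invoked in Theorem \ref{th:cubic2var} decide whether the integer zero set is finite or infinite (and list it when finite); if $g\ge 1$, Siegel's theorem guarantees that the integer zero set is automatically finite, so I simply output ``finite''.

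The main obstacle is a conceptual one that fortunately does not block the statement as posed: Siegel's theorem is famously \emph{ineffective}, so this strategy cannot be upgraded to enumerate the integer solutions in the higher-genus case. But the theorem only asks for the finite-versus-infinite dichotomy, and for that purpose the mere qualitative conclusion of Siegel suffices. A secondary technical point is ensuring the genus computation is carried out effectively; this is classical, since both the arithmetic genus and each local delta invariant are explicitly computable from $P_i$ and its resultants, and assembling them gives an algorithm that terminates on every input.
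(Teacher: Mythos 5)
Your proposal is correct and follows essentially the same route the paper takes (and attributes to the introduction of \cite{bilu2000diophantine}): reduce to an absolutely irreducible factor via factorization over $\mathbb{Q}$ and Theorem \ref{th:notabsirr}, compute the genus, invoke the effective genus-$0$ algorithms of \cite{poulakis1993points, poulakis2002solving}, and use the qualitative statement of Siegel's theorem for $g\ge 1$, where ineffectiveness is harmless because only the finite-versus-infinite dichotomy is required. The only difference is that you spell out the effective genus computation, which the paper leaves implicit.
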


See the introduction of \cite{bilu2000diophantine} for an explanation how to deduce Theorem \ref{th:finiteness2var} from Siegel's Theorem \cite{siegel1929uber}. However, all the known proofs of Siegel's Theorem are ineffective. For a given $P$, the algorithm can output that \eqref{eq:2vargen} has a finite number of integer solutions, and even give an explicit upper bound for the number of such solutions \cite{bombieri1990mordell}, but not for their size. Hence, it cannot be used to find all the solutions and even to decide if any integer solution exists.

In 1970, Baker \cite{baker1970integer} proved an effective version of Siegel's Theorem for genus $1$ curves. Together with the genus $0$ algorithm \cite{poulakis1993points, poulakis2002solving}, this implies the following Theorem.
%All genus $2$ curves has a defining equation in the form 
%$$
%y^2 + h(x) y = g(x),
%$$
%and are therefore covered by Proposition \ref{prod:quady}. Hence, the following Theorem holds.

\begin{theorem}\label{th:gesus1}
There is an algorithm that, given an absolutely irreducible polynomial $P(x,y)$ of genus $g \leq 1$ with integer coefficients, determines all integer solutions to \eqref{eq:2vargen}.
\end{theorem}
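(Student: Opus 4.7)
The plan is to give a case analysis on the genus $g \in \{0,1\}$, having first shown that $g$ itself is computable from $P$. For the computability of the genus, one uses the genus--degree formula \eqref{eq:gendeg} corrected by the contributions of the singular points of the projective closure: the singular points can be enumerated effectively by the algorithm of Sakkalis--Farouki \cite{sakkalis1990singular} (already cited in the proof of Theorem~\ref{th:notabsirr}), and their delta--invariants can be computed by standard resolution procedures. So we may assume we have branched on the value of $g$.

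In the case $g = 0$, the absolutely irreducible curve $P(x,y) = 0$ is rational, and there are effective algorithms that either list all integer solutions (when finite) or produce a finite set of polynomial parametrizations covering them (when infinite); this is precisely the content of the work of Poulakis \cite{poulakis1993points, poulakis2002solving}, already used in the proof of Theorem~\ref{th:cubic2var}. In either subcase we obtain a complete description of the integer solutions.

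In the case $g = 1$, Siegel's theorem guarantees that the set of integer solutions is finite, but, as noted in the excerpt, the classical proof is ineffective. The key input here is Baker's 1970 theorem \cite{baker1970integer}, which provides an effective upper bound $B = B(P)$ (via linear forms in logarithms, after a birational reduction to a Weierstrass model of the associated elliptic curve) for the maximum of $|x|$ and $|y|$ over all integer solutions $(x,y)$ of $P(x,y) = 0$. Once $B$ is known, the algorithm simply enumerates all pairs of integers in $[-B,B]^2$ and tests $P(x,y) = 0$ on each, returning the list of those that vanish.

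The main obstacle here is not existence but effectiveness in a practical sense: Baker's bound $B(P)$ is astronomically large, so the naive enumeration step is infeasible in practice, and one really needs the refined lattice--reduction techniques of \cite{pethHo1999computing, stroeker2003computing} (mentioned earlier in the paper) to compute the integer points in reasonable time. However, for the statement of Theorem~\ref{th:gesus1}, which only asserts the existence of an algorithm, combining the computability of $g$ with the two case algorithms above is sufficient.
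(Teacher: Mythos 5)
Your proposal is correct and follows essentially the same route as the paper, which derives the theorem by combining Baker's 1970 effective version of Siegel's theorem for genus $1$ curves \cite{baker1970integer} with the genus $0$ algorithms of Poulakis \cite{poulakis1993points, poulakis2002solving}. Your additional remarks on the effective computability of the genus and on practical feasibility are sensible supplements but do not change the argument.
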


No analogue of Theorem \ref{th:gesus1} is known for higher genus, even for $g=2$. As noted by Masser \cite{masser2020alan}, there is not even known algorithm that, given integer $a$ as an input, outputs all integer solution to the simple-looking genus $2$ equation
$$
x^4 - y^3 - a x y = 0.
$$
However, there are methods that works well for many specific genus $2$ equations with small coefficients. It is known that any genus $2$ equation can, after rational change of variables, be written in the form
\begin{equation}\label{eq:gen2}
y^2 = P(x),
\end{equation}
where $P(x)$ is a polynomial with integer coefficients of degree $5$ or $6$ with distinct roots. Note that integer solutions to \eqref{eq:gen2} can be computed by Theorem \ref{th:ellBaker1}, but the integrality of the solutions is not preserved after a rational change of variables, so we need to determine the rational solutions to \eqref{eq:gen2}. A celebrated theorem of Faltings \cite{faltings1983endlichkeitssatze} states that every equation of genus $g\geq 2$ has at most a finite number of rational solutions. All the known proofs of this theorem are ineffective and do not lead to an algorithm, but there are effective methods that work well for many individual equations. A notable example is the ``Chabauty'' function implemented in Magma \cite{bosma1997magma}, which is based on the method introduced by Chabauty \cite{chabauty1941points} and then refined and extended by many other authors. While this method alone cannot solve all genus $2$ equations, Poonen \cite{poonen2000computing} argued that its combination with other known methods seems to work for every individual genus $2$ equation, although it seems to be very difficult to prove any general theorem along this line. 
%As a notable example, Bruin and Stoll \cite{bruin2008deciding} decided\footnote{For $42$ equations (up to equivalence), the result is conditional on the Birch and Swinnerton-Dyer conjecture or on the generalized Riemann hypothesis.} the existence of rational solutions of all equations \eqref{eq:gen2} for which the absolute values of the coefficients of $P(x)$ are at most $3$. 
Algorithm \ref{alg:unified} has been able to automatically solve/exclude all genus $2$ equations up to $H\leq 50$.

By the genus–degree formula \eqref{eq:gendeg}, all quartic equations have genus at most $3$, and all open equations \eqref{eq:H32a}-\eqref{eq:H32d} have genus exactly $3$. 
%By Theorem \ref{th:gesus2}, this is the smallest genus for which the solvability of \eqref{eq:2vargen} remains open. 
There are some methods for solving the genus $3$ equations, see e.g. \cite{bruin2016generalized}, but the presently available algorithms work only for some special equations and seems to be not feasible for equations \eqref{eq:H32a}-\eqref{eq:H32d}.

With the exception of equation \eqref{eq:h34vieta}, all the equations we have discussed so far were in $2$ or $3$ variables. By Proposition \ref{prop:4nplus4}, in the range $H\leq 32$ it suffices to consider only equations in $n\leq 6$ variables. 
%The smallest equation in more than $3$ variables the program could not automatically solve was the equation 
%$
%y(x^2-y)=(z+t)^2+1
%$
%of size $H=29$, but this is just a trivial modification of the equation \eqref{eq:h17} with variable $z$ replaced by $z+t$. If we ignore such trivialities, then 
The program\footnote{The program is written in Wolfram Mathematica \cite{wolfram1991mathematica} and required 20 hours on a standard laptop to go through the full range of equations in $n\leq 6$ variables with $0\leq H \leq 32$.} 
returns no equations in $4\leq n\leq 6$ variables of size $H \leq 32$ that are not solvable by the trivial methods of Section \ref{sec:trivial}. The smallest equation in $n>3$ variables that requires application of Algorithm  \ref{alg:quadform} is the equation   
\begin{equation}\label{eq:h33var4}
y(x^2+z^2+2) = t^2 + (t+1)^2
\end{equation}
of size $H=33$, while the smallest equation that requires Algorithm  \ref{alg:vieta} is the equation \eqref{eq:h34vieta} of size $H=34$. Our general Algorithm \ref{alg:unified} has been able to automatically solve all $4$-variable equations of size $H\leq 36$. 

\subsection{Symmetric equations}

Symmetric equations are those that do not change with permutations of variables. They usually look particularly nice. We start with the case of $2$-variables equations and ignore Theorem \ref{th:Runge} for a moment. Then the smallest symmetric equation the program returned was the equation 
%Restricted to symmetric equations in $2$ variables, our program returns no equations of size $H \leq 38$.
%
%The only equation of size $H=39$ it returns is 
\begin{equation}\label{eq:H39sym2}
x^3 + x^2y^2 + y^3 = 7
\end{equation}
of size $H=39$. 
We remark that this equation is covered by Theorem \ref{th:Runge}, because it has $m=n=3$ and condition (C1) holds with $i=j=2$. 
However, it is instructive to present a direct proof that \eqref{eq:H39sym2} has no integer solutions due to Rouse \cite{J2020}.

\begin{proposition}\label{prop:H39sym2}
Equation \eqref{eq:H39sym2} has no integer solutions.
\end{proposition}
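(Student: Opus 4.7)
The plan is to exploit the symmetry of the equation by passing to elementary symmetric functions $s := x+y$ and $p := xy$. Under this change of variables, the equation becomes
\[
s^3 - 3sp + p^2 = 7,
\]
and integer pairs $(x,y)$ correspond bijectively to integer pairs $(s,p)$ for which $s^2 - 4p \geq 0$ is a perfect square, since $x,y$ must be the integer roots of $t^2 - st + p$.

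Reading this relation as a quadratic in $p$, the discriminant
\[
D(s) = 9s^2 - 4(s^3 - 7) = -4s^3 + 9s^2 + 28
\]
must equal a non-negative perfect square $k^2$. The inequality $4s^3 \leq 9s^2 + 28$ fails for $s \geq 4$, so $s \leq 3$; however $D(s) > 0$ for every $s \leq 0$, so no elementary bound on $|s|$ appears from below, and one really needs to classify all integer $s$ with $D(s)$ a square.

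The cubic $-4s^3 + 9s^2 + 28$ has three distinct complex roots, since its derivative $6s(3-2s)$ vanishes only at $s = 0$ and $s = 3/2$, where the cubic takes the nonzero values $28$ and $139/4$. Thus the affine curve $k^2 = D(s)$ has genus one. Multiplying through by $16$ and setting $X := -4s$, $Y := 4k$ brings it to Weierstrass form
\[
Y^2 = X^3 + 9X^2 + 448.
\]
Theorem \ref{th:ellBaker1} (implemented, e.g., by \texttt{EllipticCurve([0,9,0,0,448]).integral\_points()} in SageMath) produces the complete finite list of integer points. The expected output shows that the only integer point with $X \equiv 0 \pmod 4$ is $(X,Y) = (-12, \pm 4)$, which corresponds to $s = 3$, $k = \pm 1$, and so $p = (9 \pm 1)/2 \in \{4,5\}$.

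It remains to verify the final integrality condition. For each candidate pair $(s,p) = (3,4)$ and $(3,5)$, we compute $s^2 - 4p = -7$ and $-11$ respectively; both are negative, so the resulting $x$ and $y$ are not even real. Consequently \eqref{eq:H39sym2} has no integer solutions. The algebraic reductions in the first three paragraphs are routine; the main obstacle is the elliptic-curve step, where showing that no other integer $s$ gives $D(s)$ a perfect square requires Baker's effective bounds, and in practice is handled by a computer algebra system.
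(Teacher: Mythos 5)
Your approach is essentially the same as the paper's: both pass to the elementary symmetric functions of $x,y$, reduce to an elliptic curve in Weierstrass form, enumerate its integral points, and then discard each candidate because $x,y$ would not be integers. The paper works directly with the long Weierstrass model $v^2+3uv=u^3+7$ (where $u=-s$, $v=p$), while you complete the square and rescale to $Y^2=X^3+9X^2+448$; these are the same curve.

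However, there is a genuine error in your computational claim. The curve $Y^2=X^3+9X^2+448$ has, besides $(-12,\pm 4)$, the integral points $(744,\pm 20416)$, and $744\equiv 0\pmod 4$, $20416=4\cdot 5104$, so this point does pass your congruence filter: it corresponds to $s=-186$, $k=\pm 5104$ (indeed $D(-186)=-4(-186)^3+9(186)^2+28=26050816=5104^2$). Equivalently, on the paper's model these are the points $(u,v)=(186,2273)$ and $(186,-2831)$, which the paper explicitly lists and checks. Your proof as written simply omits this case, so it is incomplete. The gap is repairable: for $s=-186$ one gets $p=(3s\pm k)/2\in\{2273,-2831\}$, and $s^2-4p$ equals $25504$ or $45920$, neither of which is a perfect square ($159^2=25281<25504<25600=160^2$ and $214^2=45796<45920<46225=215^2$), so no integer $x,y$ arise and the conclusion stands. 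But note that, unlike the two candidates you did treat, these cannot be dismissed by a sign argument ($s^2-4p>0$ here), so the perfect-square check is genuinely needed. The lesson is that the ``expected output'' of the integral-point computation must actually be computed, not guessed.
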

\begin{proof}
A standard tool for solving symmetric equations in $2$ variables is change of variables $u=-(x+y)$, $v=xy$. Then
$$
-u^3 = (x+y)^3 = x^3+y^3 + 3xy(x+y) = x^3+y^3-3uv, 
$$
hence $x^3+y^3 = -u^3+3uv$, and \eqref{eq:H39sym2} reduces to
$$
v^2 + 3uv = u^3+7. 
$$
This equation is an example of an elliptic curve in Weierstrass form \eqref{eq:ellWei}, and it can be solved by command \eqref{eq:commandellWei}.
%, for which there are known algorithms (one of them implemented in SageMath \cite{zimmermann2018computational}) to compute integer solutions. 
In this example, the solutions are $(-3,4)$, $(-3,5)$, $(186,-2831)$ and $(186,2273)$.
If, for example, $u=-3$ and $v=4$, then $x+y=3$ and $xy=4$. Then $x(3-x)=4$, or $x^2-3x+4=0$. But this quadratic equation has no integer solutions. The other three cases can be checked similarly.
\end{proof}

%We next invite the reader to use the same method as in the proof of Proposition \ref{prop:H39sym2} to solve the following similar equations.
%
%\begin{exercise}\label{ex:sym2var}
%Check whether equations 
%$$
%x^3+x^2 y^2+y^3=9,
%$$
%$$
%x^3 + x^2y^2 + y^3 - xy = 4,
%$$
%$$
%x^3 + x^2 y^2 + y^3 - xy = 5
%$$
%and
%$$
%x^3 + x^2y^2 + y^3 + x^2 + y^2 + 1 = 0 
%$$
%have any integer solutions.
%\end{exercise}

%Once Exercise \ref{ex:sym2var} is done, this would finish the analysis of all symmetric equations in $2$ variables of size $H \leq 41$. 
%Note that all the listed equations are covered by Theorem \ref{th:Runge}. 
After we excluded all equations covered by Theorem \ref{th:Runge}, the program returned no symmetric equations in $2$ variables of size $H \leq 50$. 

For $3$ or more variables, we have already discussed some symmetric equations such as 
%\eqref{eq:h24mark} and 
\eqref{eq:h34vieta}, solvable by the Vieta jumping technique. After we exclude all equations covered by Algorithm \ref{alg:vieta}, the smallest symmetric equation the program returns is the equation
%the smallest symmetric equations the program returns are equations \eqref{eq:h24mark} 
%and \eqref{eq:h28mark}, see Exercise \ref{ex:jumping2}. The next one is 
%\begin{equation}\label{eq:H31sym3}
%xyz + x^2 + y^2 + z^2 + x + y + z + 5 = 0
%\end{equation}
%of size $H=31$. This equation is of the type \eqref{eq:markwithlin} and is covered by Proposition \ref{prop:markgen}.
%
%\begin{exercise}\label{ex:jumping3}
%Use the algorithm from the proof of Proposition \ref{prop:markgen} to check whether equation \eqref{eq:H31sym3} has any integer solutions.
%\end{exercise}
%
%We next asked the program to exclude all equations of the type \eqref{eq:markwithlin}. Then the smallest equation it returns is
%$$
%x^2+y^2+z^2-xy-yz-xz=xyz-4
%$$  
%of size $H=36$. However, the substitution $x\to x-1$, $y\to y-1$, $z\to z-1$ reduces this equation to exactly \eqref{eq:H31sym3}. 
%
%The smallest symmetric $3$-variable equation which is truly not of the type \eqref{eq:markwithlin} is 
\begin{equation}\label{eq:H37sym3}
x^3 + y^3 + z^3 + xyz = 5
\end{equation}
of size $H=37$. 
%We leave the solvability of this equation to the reader as an open question.
This equation was left open in the first version of this paper \cite{grechuk2021diophantinev1}, but then we discovered that it has a solution
\begin{equation}\label{eq:H37symsol}
x =-3028982, \quad y=-3786648, \quad z=3480565.
\end{equation}
We remark that a direct approach of selecting a bound $B$, trying all pairs $(x,y)$ with $\max{|x|,|y|}\leq B$ and solving for $z$ would require checking over $10^{13}$ pairs before finding the solution \eqref{eq:H37symsol}. Instead, we applied the linear substitutions $y \to v-z$ and $x\to u+3v$ for new integer variables $u,v$ to reduce the equation to
$$
-5 + 28 v^3 + u^3 + 9 u^2 v + 27 u v^2 + u v z - u z^2 = 0.
$$
%from which it is clear that $u$ must be a divisor of $28v^3-5$. Hence, 
To find a solution to the last equation, we can try $v=0, \pm 1, \pm 2, \dots$, then choose $u$ among the divisors of $28v^3-5$, and solve for $z$. This method allowed us to find the the solution \eqref{eq:H37symsol} is a reasonable time on standard PC.

The next symmetric equation the program returns is the equation
\begin{equation}\label{eq:H39sym}
x^3+x+y^3+y+z^3+z = xyz + 1
\end{equation}
of size $H=39$. Using the same method as for equation \eqref{eq:H37sym3}, we were able to search for solutions to \eqref{eq:H39sym} up to $|y+z|\leq 10^8$, but found no solutions in this range.

\begin{question}
Do there exist integers $x,y,z$ satisfying \eqref{eq:H39sym}? 
\end{question}

In fact, \eqref{eq:H39sym} is the only remaining open symmetric equation of size $H\leq 42$. For this equation, we have $S=\frac{1}{3}+\frac{1}{3}+\frac{1}{3}=1$, where $S$ in defined in \eqref{eq:Sdef}. Hence, it is quite likely that 
%\eqref{eq:H39sym}
it has integer solutions but the smallest one may happen to be quite large. 

%Equation \eqref{eq:H37sym3} is a cubic equation in $3$ variables. We next do some heuristic analysis for any equation
%\begin{equation}\label{eq:cubic3var}
%P(x,y,z)=0
%\end{equation}
%of this form. Let us assume that we search for solutions to \eqref{eq:cubic3var} such that $B/2<\max(|x|,|y|,|z|)\leq B$ for some integer $B$. There are $\approx (2B)^3-B^3=7B^3$ such triples. For each of the triples, we have $|P(x,y,z)|\leq CB^3$ for some constant $C$, hence there are $2CB^3$ possible values of $P(x,y,z)$. Hence, all values including $0$ occur on average $\approx\frac{7B^3}{2CB^3}=\frac{7}{2C}$ number of times. Thus, we expect about $\frac{7}{2C}$ solutions to \eqref{eq:cubic3var} in this range. For the same reason, we expect about the same number of solutions in the ranges $B/4<\max(|x|,|y|,|z|)\leq B/2$, $B/8<\max(|x|,|y|,|z|)\leq B/4$, and so on, totalling in about $c\log B$ solutions to \eqref{eq:cubic3var} in the range $\max(|x|,|y|,|z|)\leq B$, where $c$ is the constant depending on the coefficients of $P$. Thus, either there is a special reason (called obstruction) which precludes \eqref{eq:cubic3var} from having integer solutions, or there are solutions. However, $c\log B$ grows slowly with $B$, hence there may be not many solutions, the smallest one may happen to be quite large and may not be easily found by the computer search.

\subsection{Equations with $3$ monomials}

Equation \eqref{eq:H39sym}, while symmetric and having reasonably low $H$, has $8$ monomials and therefore does not look particularly simple. Another possible measure of simplicity of an equation is the number of monomials. Note than one monomial must be a non-zero free term, otherwise $0$ is a solution. An equation with $2$ monomials has the form
\begin{equation}\label{eq:2mon}
a x_1^{k_1} \dots x_n ^{k_n} = b,
\end{equation}
where $a,b$ are non-zero integers and $k_1, \dots, k_n$ are positive integers. If either $\frac{b}{a}$ is not an integer, or $\frac{b}{a}<0$ and all $k_i$ are even, then  \eqref{eq:2mon} has no integer solution. Otherwise let $p_1^{m_1}\dots p_l^{m_l}$ be the prime factorization of $\left|\frac{b}{a}\right|$, and let $S(k_1, \dots, k_n)$ be the set of non-negative integers $N$ representable as $N=\sum_{i=1}^n c_i k_i$ for some non-negative integers $c_1, \dots, c_n$. Then \eqref{eq:2mon} is solvable in integers if and only if 
\begin{equation}\label{eq:2moncond}
m_j \in S(k_1, \dots, k_n), \quad j=1,\dots,l.
\end{equation}
Indeed, if $x_1, \dots, x_n$ is any solution to \eqref{eq:2mon}, then each $x_i$ can be written as $x_i=\prod_{j=1}^l p_j^{c_{ij}}$ for some integers $c_{ij}\geq 0$, hence $m_j=\sum_{i=1}^n c_{ij} k_i, j=1,\dots,l$, and \eqref{eq:2moncond} follows. The proof of the converse direction is similar.

We next consider equations with $3$ monomials.
We have already met several such equations, see \eqref{eq:h15} and \eqref{eq:H26xy}, but such equations, while they may be non-trivial to solve, are covered by the general algorithms in Corollary \ref{cor:brindza}. In fact, Corollary \ref{cor:brindza} implies the existence of an algorithm for solving an arbitrary $3$-monomial equation in $2$ variables.

\begin{proposition}\label{prop:3mon2var}
There exists an algorithm which, given any polynomial $P(x,y)$ with $3$ monomials and integer coefficients, determines whether equation $P(x,y)=0$ has an integer solution.
\end{proposition}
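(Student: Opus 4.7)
The plan is to reduce an arbitrary 3-monomial equation to either a case where a solution is obvious or to a superelliptic equation covered by Corollary \ref{cor:brindza}. Write $P(x,y)=a_1 x^{i_1}y^{j_1}+a_2 x^{i_2}y^{j_2}+a_3 x^{i_3}y^{j_3}$ with all $a_k\neq 0$ and distinct exponent pairs. At most one of the three monomials can have $(i_k,j_k)=(0,0)$ (two constants would collapse into one, giving a 2-monomial equation, already handled). If \emph{no} monomial is constant, every monomial vanishes at $(x,y)=(0,0)$, so $(0,0)$ is an integer solution and the algorithm reports ``Yes''. So assume exactly one monomial is constant; relabel it as $a_3$, set $c=-a_3\neq 0$, and write the equation as $a_1 x^{i_1}y^{j_1}+a_2 x^{i_2}y^{j_2}=c$. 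Put $d_1=\min(i_1,i_2)$, $d_2=\min(j_1,j_2)$, and factor out:
\begin{equation*}
x^{d_1} y^{d_2} \cdot B(x,y)=c,\qquad B(x,y)=a_1 x^{i_1-d_1}y^{j_1-d_2}+a_2 x^{i_2-d_1}y^{j_2-d_2}.
\end{equation*}

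Suppose first that $d_1=d_2=0$. Since neither remaining monomial is itself constant, the ``zero exponent'' in $x$ and in $y$ cannot be concentrated in a single monomial; so, up to swapping $M_1$ and $M_2$, one has $i_1=0<j_1$ and $j_2=0<i_2$. The equation reduces to
\begin{equation*}
a_2 x^{i_2}=c-a_1 y^{j_1},
\end{equation*}
which is precisely the superelliptic form $aY^m=P(X)$ of Corollary \ref{cor:brindza} (with $Y=x$, $X=y$), hence decidable.

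Suppose instead that $d_1+d_2>0$. Since $c\neq 0$, both factors in $x^{d_1}y^{d_2}\cdot B(x,y)=c$ are nonzero, so for every integer solution there is a factorization $c=mn$ with $x^{d_1}y^{d_2}=m$ and $B(x,y)=n$. There are finitely many such factorizations (the divisors of $c$). For each, when $d_1,d_2>0$ the equation $x^{d_1}y^{d_2}=m$ forces $|x|,|y|\leq |m|$, leaving finitely many candidates to test in $B(x,y)=n$. When $d_1>0$ and $d_2=0$, the equation $x^{d_1}=m$ has finitely many integer solutions for $x$; substituting each into $B(x,y)=n$ produces a univariate polynomial equation in $y$ whose integer roots can be listed by the rational root theorem. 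The case $d_1=0$, $d_2>0$ is symmetric.

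The only substantive step is the reduction in the $d_1=d_2=0$ subcase, which is where the deep input, Brindza's theorem via Corollary \ref{cor:brindza}, is needed; everything else is elementary finite enumeration driven by the divisors of the constant term. Consequently the main obstacle is not in constructing the algorithm but in verifying that the case analysis is exhaustive: one must check carefully that ``at most one constant monomial'' plus the minimality of $d_1,d_2$ really does force the subcase structure above, and in particular that the split $i_1=0$, $j_2=0$ is the only possibility when $d_1=d_2=0$.
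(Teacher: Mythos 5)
Your proposal is correct and takes essentially the same route as the paper: you split according to whether the two non-constant monomials share a common variable (in which case divisibility of the constant term gives a finite search) or not (in which case the equation is superelliptic and Corollary \ref{cor:brindza} applies). The paper phrases the first case simply as ``$x$ must divide $a_3$, so try all divisors and solve the resulting one-variable equations'' rather than factoring out $x^{d_1}y^{d_2}$ and enumerating factorizations $c=mn$, but the underlying argument is identical.
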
 
\begin{proof}
We may assume that $P(0,0)\neq 0$, otherwise $x=y=0$ is a solution. Then $P(x,y)$ has the form
$$
P(x,y)=a_1 x^{k_1}y^{k_2} + a_2x^{m_1}y^{m_2}+a_3,
$$ 
where $a_1,a_2,a_3$ are non-zero integers and $k_1,k_2,m_1,m_2$ are non-negative integers. If both $k_1$ and $m_1$ are positive, then $P(x,y)=0$ only if $x$ is a divisor of $a_3$, and we may solve the equation by trying all the divisors and solving the resulting equations in one variable. The case when both $k_2$ and $m_2$ are positive is similar. Otherwise equation $P(x,y)=0$ can be written as
$$
a x^k = b y^{m} +c, 
$$ 
where we may assume that $a>0$ and $c\neq 0$.
%By interchanging the roles of $x$ and $y$ if necessary, we may assume that $k\leq m$. This equation is easy if $k\leq 1$ or $k=m=2$, so we may also assume that $k\geq 2$ and $m\geq 3$.  
%Next, we write $y=az+r$, where $z$ is new variable, and $0\leq r \leq a-1$. 
%We can next consider $a$ cases: $y=az, y=az+1, \dots, y=az+(a-1)$, where $z$ is a new integer variable. In each case, the equation is either not solvable because the right-hand side is not divisible by $a$, or reduces to equation in the form
%$$
%x^k = P(z),
%$$ 
%where $P(z)=\frac{1}{a}(b(az+r)^m+c)$ is some polynomial of degree $m\geq 3$ with integer coefficients. 
%Because $c\neq 0$, $b y^{m} +c$ has no repeated roots, and the 
But this equation is solvable in finite time by Corollary \ref{cor:brindza}.
\end{proof}

Given Proposition \ref{prop:3mon2var}, we may restrict our attention to $3$-monomial equations with at least $3$ variables. Among the equations of this form we would like to mention the equation
\begin{equation}\label{eq:H42term3b}
x^3y^2 = z^2 - 6
\end{equation}
of size $H=42$, for which Algorithm \ref{alg:largesol} returned a solution
$$
x=19, \quad y=755,031,379, \quad z=62,531,004,125.
$$ 
This solution would be impossible to find by a direct search, but the Algorithm noted that for each fixed $x$, equation \eqref{eq:H42term3b} is a quadratic equation in $2$ variables, which is easily solvable \cite{Alpern}. The algorithm then tried a few values of $x$, and, for $x=19$, found that the equation $19^3y^2 = z^2 - 6$ is solvable.

The smallest $3$-monomial equation left open by Algorithm \ref{alg:unified} is the equation
\begin{equation}\label{eq:H46term3}
x^3y^2 = z^3 + 6
\end{equation}
of size $H=46$.
The famous and well-believed abc conjecture of Masser and Oesterl\'e \cite{oesterle1988nouvelles} states that if $a,b,c$ are coprime positive integers such that $a+b=c$, then the product of the district prime factors of $abc$ is not much smaller than $c$. More precisely, if $\mathrm{rad}(n)$ denotes the product of district prime factors of $n$, then the abc conjecture predicts that for every real number $\epsilon>0$, there exists only a finite number of triples $(a,b,c)$ of coprime positive integers such that $a+b=c$ and
$$
c > \mathrm{rad}(abc)^{1+\epsilon}.
$$
It is easy to check that this conjecture implies that equation \eqref{eq:H46term3} may have at most finitely many integer solutions. However, the abc conjecture remains open, and even its truth would not exclude a possibility of \emph{some} solutions to \eqref{eq:H46term3}. We leave the solvability of this equation as an open question.
 
\begin{question}
Do there exist integers $x,y,z$ satisfying \eqref{eq:H46term3}? 
\end{question}

The following heuristic analysis shows that this equation may have no solutions but proving this may be quite difficult. For an integer $B>0$, there are about 
$O(B^{1/2+1/3})=O(B^{5/6})$ triples $(x,y,z)$ such that $B/2<\max\{|x^3y^2|,|z|^3\}\leq B$. Hence, if we search for a solution to \eqref{eq:H46term3} in this range, there will be about $O(B^{-1/6})$ expected solutions. This makes the existence of large solutions quite unlikely, even if there are no clear obstructions for the solution existence. In case of absence of such obstructions, proving that a surface has no integer points may be very difficult.

\subsection{The shortest open equations}\label{sec:short}

In this section we discuss what happens if, instead of ordering the equations by $H$, we order them by length $l$ defined in \eqref{eq:ldef}, or, equivalently, by an integer
$$
L(P) := 2^{l(P)} = \prod_{i=1}^k|a_i|\cdot 2^{\sum_{i=1}^k d_i}.
$$
%It turns out that the shortest equation the program cannot automatically exclude is the equation \eqref{eq:h17} of length $l=7$, which has $H=17$, and is also the smallest non-excluded equation with respect to $H$. 
%All equations of size $H \leq 32$ has been considered earlier in this paper, so here we only need to study the equations of size $H>32$.
%The only equation of length $l<10$ that Algorithm \ref{alg:unified} with $N=10^5$  cannot automatically solve is the equation \eqref{eq:H42term3b} of length $l=\log_2(768)=8+\log_2 3 \approx 9.6$.
% that, as we know, has a large solution. For $l=10$, there is a couple 
%As we have already discussed, this equation has a large solution that can be found by trying small values of $x$ and solving the resulting quadratic equation in $y,z$. The same method shows that equations
%$$
%1+2 y^2+x^3 y^2+z^2 = 0
%$$
%and
Algorithm \ref{alg:unified} has been able to automatically solve all equations of length $l<10$, but left open a few equations of length $l=10$. We first mention the equation
$$
2+y^2+x^3 y^2+z^2 = 0
$$ 
%of length $l=10$ have solutions
%$$
%x=-15, \quad y=3421166832068002085, \quad z=198692934319156833882
%$$
%and
that has not been solved by Algorithm \ref{alg:largesol} only because we have chosen parameter $N_2=100$. Going slightly beyond this limit returns a solution 
$$
x=-103, \quad y=56781721110114762679275339, \quad z=59355940474298270525410570738.
$$

Another equation of length $l=10$ that Algorithm \ref{alg:unified} could not solve was
$
1+x^3 y+x^2 y^2+z^2 = 0,
$
but non-linear substitution $-yz \to y$ (which the Algorithm did not try) transforms this equation into \eqref{eq:h17}.

The only equations of length $l\leq 10$ that seem to require a new idea are the equations
\begin{equation}\label{eq:l10main1}
y(x^3-y) = z^4+1,
\end{equation}
\begin{equation}\label{eq:l10main2}
2 y^3 + x y + x^4 + 1 = 0
\end{equation}
and 
\begin{equation}\label{eq:l10main3}
x^3 y^2 = z^4+2
\end{equation}
that have length $l=10$ and sizes $H$ equal to $37$, $37$, and $50$, respectively. Equation \eqref{eq:l10main1} resembles the current smallest open equation \eqref{eq:h31main}, but with $z^4$ instead of $z^3$. This difference seems to be crucial, because $S$ defined in \eqref{eq:Sdef} is now $S=\frac{1}{2}+\frac{1}{4}+\frac{1}{6} = \frac{11}{12} < 1$, hence the equation is unlikely to have large solutions. The same analysis is true for the equations \eqref{eq:l10main2} and \eqref{eq:l10main3} as well. On the other hand, we have checked that all these equations have no  solutions in the range up to a million. We leave their solubility as open questions.

\begin{question}\label{qu:shortest}
Do there exist integers $x,y,z$ satisfying \eqref{eq:l10main1}? 
\end{question}

\begin{question}\label{qu:shortest2}
Do there exist integers $x,y$ satisfying \eqref{eq:l10main2}? 
\end{question}

\begin{question}\label{qu:shortest3}
Do there exist integers $x,y,z$ satisfying \eqref{eq:l10main3}? 
\end{question}

As mentioned above, equation \eqref{eq:l10main1} is similar to \eqref{eq:h31main}. Further, equation \eqref{eq:l10main2} is similar to the smallest open $2$-variable equations \eqref{eq:H32a}-\eqref{eq:H32d}, while equation \eqref{eq:l10main3} resembles the smallest open $3$-monomial equation \eqref{eq:H46term3}. It is likely that if we resolve equations with small $l$, we may use similar methods to resolve the equations with the smallest $H$, and vice versa. In this sense, it does not really matter what ``measure of size'' we use to order equations.

\section{Conclusions}\label{sec:concl}

%The most obvious suggestion for further research is to solve Problem \ref{prob:main} for equation \eqref{eq:h17}, which would finish the analysis of equations of size $H\leq 20$, then proceed of size $H=21,22,$ and so on. 

We have considered, in a systematic way, all ``small'' polynomial Diophantine equations and identified those for which the solvability problem (Problem \ref{prob:main}) is interesting. 
%As explained in the introduction, 
%% Section \ref{sec:open}, 
%the specific ``measure of size'' we used is not important, because any other reasonable measure of size would likely to return a similar set of ``interesting'' equations, just maybe in a different order. 
As expected, a vast majority of small equations either have small easy-to-find solutions or have no solutions for trivial reasons. However, we also have found some simple-to-state equations which are not solvable in integers for deeper reasons, as well as equations
%, like \eqref{eq:H42term3b}, 
whose smallest solutions are quite large and are difficult to find by a direct search. Some of the equations are solved, some others are left to the reader as exercises and open questions. 

An obvious direction for future research is to solve open questions listed here, proceed to the equations with higher values of $H$ or $l$, and see how far can we go. Table \ref{tab:open} summarizes all smallest open equations listed in this paper, just to have them all in one place. This project is very active, and some of the equations listed as open may be solved by the time you read this paper. Please check the mathoverflow question \cite{BG2021} for the up-to-date list of the current smallest open equations.

\begin{table}
\begin{center}
\begin{tabular}{ |c|c|c| } 
 \hline
 Equation & Size & Comment \\ 
 \hline\hline
 $y(x^3-y)=z^3+3$ & $H=31$ & The smallest (in $H$) open equation \\ 
\hline
 $y^3+xy+x^4+4=0$ & $H=32$ & The smallest open $2$-variable equations \\ 
 $y^3+xy+x^4+x+2=0$ & $H=32$ &  \\ 
 $y^3+y=x^4+x+4$ & $H=32$ & \\ 
 $y^3-y=x^4-2x-2$ & $H=32$ & \\ 
\hline
 $x^3+x+y^3+y+z^3+z = xyz + 1$ & $H=39$ & The smallest open symmetric equation \\ 
\hline
 $x^3y^2 = z^3 + 6$ & $H=46$ & The smallest open $3$-monomial equation \\ 
\hline
 $y(x^3-y) = z^4+1$ & $l=10$ & The shortest (in $l$) open equations \\ 
 $2 y^3 + x y + x^4 + 1 = 0$ & $l=10$ &  \\ 
 $x^3 y^2 = z^4+2$ & $l=10$ & \\ 
 \hline
\end{tabular}
\caption{\label{tab:open} The smallest open equations.}
\end{center} 
\end{table}

Another possible research direction is to investigate, in addition to Problem \ref{prob:main}, the following problems, in the increasing level of difficulty. Let ${\cal P}$ be the set of multivariate polynomials $P(x_1, \dots, x_n)$ with integer coefficients.

\begin{problem}\label{prob:nonzero}
Given $P \in {\cal P}$ with constant term $0$, does equation $P=0$ has a \emph{non-zero} integer solution?
\end{problem}

\begin{problem}\label{prob:finlist}
Given $P \in {\cal P}$, determine whether equation $P=0$ has a finite number of integer solutions, and if so, list them all. 
\end{problem}

\begin{problem}\label{prob:allsol}
Given $P \in {\cal P}$, find all integer solutions to equation $P=0$. If there are infinitely many of them, find a convenient way to describe them all (one can use expressions with parameters, recurrence relations, etc.)
\end{problem}

\begin{problem}\label{prob:rat}
Investigate the same questions as above for rational solutions.
\end{problem}

See \cite{grechuk2022on} for some initial investigations of Problems \ref{prob:finlist} and \ref{prob:allsol}. Because these problems look more difficult than Problem \ref{prob:main}, one is expected to find even smaller equations for which the listed problems are highly non-trivial. Such equations may serve as nice test cases for any known or new technique for determining integer and/or rational points on curves and surfaces. 

%To taste how difficult Problem \ref{prob:allsol} can be, think how you would describe \emph{all} integer solutions to the simple equation $x_1x_4-x_2x_3=1$ of size $H=9$. In 2010, Vaserstein \cite{vaserstein2010polynomial} found a way how to do this using polynomial expressions with $46$ parameters. See Section 10.4 of \cite{grechuk2019theorems} for an accessible description of this theorem.

%You may also try to solve any of Problems \ref{prob:main}-\ref{prob:allsol} for any restricted family of polynomials, again ordered by $H$. For example, you may consider:
%\begin{itemize}
%\item Homogeneous polynomials. For this family, Problem \ref{prob:main} is trivial because of zero solution, but Problems \ref{prob:nonzero}-\ref{prob:allsol} all make sense.
%\item Polynomial in specific number of variables, for example, in $2$ variables.
%\item Polynomials of a given degree, for example, cubic ones.
%\item Symmetric polynomials.
%\item Various intersections of the above families. For example, can you find the simplest (as measured by $H$) symmetric polynomial in $2$ variables for which Problem \ref{prob:finlist} is an open question?
%\end{itemize}

\section*{Acknowledgements}

I thank anonymous mathoverflow user Zidane whose question \cite{Z2018} inspired this work. I also thank other mathoverflow users for very helpful discussions on this topic, especially Will Sawin for solving equations \eqref{eq:h22mark}
%, \eqref{eq:H26cubicb} 
and \eqref{eq:h26general}, Fedor Petrov for the alternative solution of \eqref{eq:h22mark}, Andrew R. Booker for finding a solution to \eqref{eq:h29d}, Majumdar and Sury for solving \eqref{eq:h29main} in a recent preprint \cite{majumdar2021fruit}, Victor Ostrik for outlining the solution of equation \eqref{eq:h17}, and Jeremy Rouse for solving equation \eqref{eq:H39sym2} (see Proposition \ref{prop:H39sym2}) and for pointing me to references \cite{poulakis1993points} and \cite{poulakis2002solving}. I thank University of Leicester students Daniel Bishop-Jennings, Sander Bharaj and Will Moreland for doing the projects on this topic under my supervision and independently verifying some of the results presented here. I thank William Gasarch for carefully reading an earlier draft of this paper and providing me with the detailed and helpful list of comments and suggestions. Last but not least, I thank Aubrey de Grey for many useful suggestions (including the suggestion to include the analysis with an alternative measure of the equations sizes), and for writing his own version of the computer program for enumerating equations, which provides an independent validation that none of the equations of small size has been accidentally missing. 

\bibliography{references} 
\bibliographystyle{plain}

\end{document}